\author{Xavier Lamy \thanks{Universit\'e de Lyon,  CNRS UMR 5208, Universit\'e Lyon 1, Institut Camille Jordan, 43 blvd. du 11 novembre 1918, F-69622 Villeurbanne cedex, France. Email address: xlamy$@$math.univ-lyon1.fr}}
\title{Bifurcation analysis in a frustrated nematic cell}
\newtheorem{thm}{Theorem}[section]
\newtheorem{prop}[thm]{Proposition}
\newtheorem{lem}[thm]{Lemma}
\newtheorem{cor}[thm]{Corollary}
\begin{document}
\maketitle

\begin{abstract}
Using Landau-de Gennes theory to describe nematic order, we study a frustrated cell consisting of nematic liquid crystal confined between two parallel plates. We prove the uniqueness of equilibrium states for a small cell width. Letting the cell width grow, we study the behaviour of this unique solution. Restricting ourselves to a certain interval of temperature, we prove that this solution becomes unstable at a critical value of the cell width. Moreover, we show that this loss of stability comes with the appearance of two new solutions: there is a symmetric pitchfork bifurcation. This picture agrees with numerical simulations performed by P.~Palffy-Muhorray, E.C.~Gartland and J.R.~Kelly. Some of the methods that we use in the present paper apply to other situations, and we present the proofs in a general setting. More precisely, the paper contains the proof of a general uniqueness result for a class of perturbed quasilinear elliptic systems, and general considerations about symmetric solutions and their stability, in the spirit of Palais' Principle of Symmetric Criticality.
\end{abstract}

\section{Introduction}\label{s_intro}

In a nematic liquid crystal, rigid rod-like molecules tend to align in a common preferred direction. To describe this orientational order, de Gennes \cite{degennes} introduced  the so called $Q$-tensor: a $3\times 3$ traceless symmetric matrix. The eigenframe of the $Q$-tensor describes the principal mean directions of alignment, while the corresponding eigenvalues describe the degrees of alignment along those directions. A null $Q$-tensor corresponds to the \textit{isotropic} liquid state. A $Q$-tensor with two equal eigenvalues corresponds to the \textit{uniaxial} state, which is axially symmetric around one eigenvector, called the director. The generic case of a $Q$-tensor with three distinct eigenvalues corresponds to the \textit{biaxial} state.

In the present paper we focus on a hybrid cell consisting of nematic material confined between two parallel bounding plates, and subject to competing strong anchoring conditions on each plate. Such systems have been studied numerically in \cite{palffymuhoraygartlandkelly94,bisi03} as a model for material frustration. On each bounding plate, the prescribed boundary condition is uniaxial, with director orthogonal to the one prescribed on the opposite plate. The numerics presented in \cite{palffymuhoraygartlandkelly94,bisi03} bring to light two different families of solutions: \textit{eigenvalue exchange} (EE) and \textit{bent director} (BD) configurations. In an eigenvalue exchange solution, the $Q$-tensor's eigenframe remains constant through the whole cell, and only the eigenvalues vary to match the boundary conditions. Therefore, inside the cell the material is strongly biaxial. In the bent director configuration however, the eigenframe rotates to connect the two orthogonal uniaxial states on the plates. Hence the tensor remains approximately uniaxial, with director bending from one plate to the other. Those two kinds of configurations are depicted in Figure~\ref{picEEBD}.

\begin{figure}[!htbp] 
\begin{center}
\begin{tikzpicture}[scale=.5]

\draw [thick] (6,-4,-3)--(6,-4,3)--(6,4,3)--(6,4,-3);
\draw [thick] (-6,-4,-3)--(-6,-4,3)--(-6,4,3)--(-6,4,-3);

\draw (9,2,0) node {EE};
\draw (9,-2,0) node {BD};

\draw [ultra thick]  (-6,2,0)--(-6,3,0);
\draw [gray,dashed] (-6,2,0)--(-5,2,0);
\draw  [gray,dashed] (-6,2,0)--(-6,2,1);

\draw [gray] (-3,2,0)--(-3,3,0);
\draw [ultra thick] (-3,2,0)--(-3,2.7,0);
\draw [gray] (-3,2,0)--(-2,2,0);
\draw [gray] (-3,2,0)--(-3,2,1);

\draw [gray] (0,2,0)--(0,3,0);
\draw [ultra thick] (0,2,0)--(0,2.4,0);
\draw [gray] (0,2,0)--(1,2,0);
\draw [gray] (0,2,0)--(0,2,1);
\draw [ultra thick] (0,2,0)--(0,2,.4);

\draw [gray] (3,2,0)--(3,3,0);
\draw [gray] (3,2,0)--(4,2,0);
\draw [gray] (3,2,0)--(3,2,1);
\draw  [ultra thick] (3,2,0)--(3,2,.7);

\draw [gray,dashed] (6,2,0)--(6,3,0);
\draw [gray,dashed] (6,2,0)--(7,2,0);
\draw [ultra thick] (6,2,0)--(6,2,1);

\draw [ultra thick]  (-6,-2,0)--(-6,-1,0);
\draw [gray,dashed] (-6,-2,0)--(-5,-2,0);
\draw [gray,dashed] (-6,-2,0)--(-6,-2,1);

\draw [ultra thick] (-3,-2,0)--(-3,-1.076,.383);
\draw [gray] (-3,-2,0)--(-2,-2,0);
\draw [gray] (-3,-2,0)--(-3,-2.383,.924);

\draw [ultra thick] (0,-2,0)--(0,-1.293,.707);
\draw [gray] (0,-2,0)--(1,-2,0);
\draw [gray] (0,-2,0)--(0,-2.707,.707);

\draw [ultra thick] (3,-2,0)--(3,-1.617,.924);
\draw  [gray] (3,-2,0)--(4,-2,0);
\draw  [gray] (3,-2,0)--(3,-2.924,.383);

\draw [gray,dashed] (6,-2,0)--(6,-3,0);
\draw [gray,dashed] (6,-2,0)--(7,-2,0);
\draw [ultra thick] (6,-2,0)--(6,-2,1);

\end{tikzpicture}

\caption{\footnotesize Schematic representation of EE and BD configurations: variations of the eigenframe of $Q$ through the cell. Eigenvectors corresponding to the largest eigenvalues are emphasized.\\
$\;$ In an EE configuration, as we move through the cell from the left to the right, the eigenvalue associated to $\mathbf{e_z}$ (the prescribed director on the left plate) decreases until, in the middle of the cell, it becomes equal to the eigenvalue associated to $\mathbf{e_y}$ (the prescribed director on the right plate), which in turn increases to match the boundary condition.\\
$\;$ In a BD configuration, as we move through the cell from the left to the right, the eigenframe rotates, and the eigenvalues do not change much, so that the eigenvector corresponding to the largest eigenvalue rotates from $\mathbf{e_z}$ to $\mathbf{e_y}$. Note that a similar eigenframe rotation could occur in the opposite way as the one pictured here: there are two possible types of BD configurations.
}

\label{picEEBD}
\end{center}
\end{figure}

When working with dimensionless variables, two parameters influence the behaviour of the system: a reduced temperature $\theta$, and a typical length $\lambda$ proportional to the thickness of the cell. In \cite{palffymuhoraygartlandkelly94, bisi03}, a bifurcation analysis is performed numerically as the cell thickness varies, at fixed temperatures. In both studies, a symmetric pitchfork bifurcation diagram is obtained \cite[Fig. 8]{bisi03}, which can be described as follows (see Figure~\ref{picbifur}). When the parameter $\lambda$ (and thus the cell thickness) is small, the only equilibrium is an eigenvalue exchange configuration, which is stable. Letting the cell thickness grow, a critical value is attained, at which this eigenvalue exchange solution loses stability. At this point, bifurcation occurs and two new stable branches of solutions appear, corresponding to bent director configurations, with their eigenframe rotating in one way or the other. The results pictured in \cite[Fig. 8]{bisi03} were obtained for a special value of the reduced temperature, $\theta=-8$, at which computations are simplified.

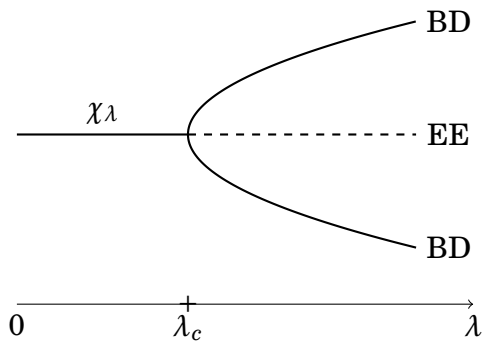
\begin{figure}[!htbp]
\begin{center}
\begin{tikzpicture}[scale=1.5]

\draw [=>stealth,->] (-1.5,-1.5) node [below] {$0$} --(2.5,-1.5) node [below] {$\lambda$};
\draw (0,-1.5) node {$+$} node [below] {$\lambda_c$};
\draw [thick] (-1.5,0)--(0,0) node [midway,above] {$\chi_\lambda$};
\draw [dashed,thick] (0,0)--(2,0) node [right] {EE};
\draw [domain=-1:1,smooth,samples=50,thick] plot (2 * \x *\x,\x);
\draw (2,-1) node [right] {BD};
\draw (2,1) node [right] {BD};

\end{tikzpicture}
\caption{\footnotesize Shape of the pitchfork bifurcation described in \cite{palffymuhoraygartlandkelly94,bisi03}. 
}
\label{picbifur}
\end{center}
\end{figure}

In the present paper, we aim at providing rigorous mathematical arguments justifying the shape of the bifurcation diagram pictured in Figure~\ref{picbifur}. Thus we fix the temperature and let the cell thickness vary. 

In a first step, we study the limits of small and large cell thickness.
For a very large cell thickness, we check that energy minimizers converge towards two possible limiting uniaxial configurations, corresponding to a rotation of the director in one way or the other. On the other hand, when the cell is sufficiently narrow, we prove indeed that the energy admits a unique critical point. Symmetry considerations imply that this unique solution is an eigenvalue exchange configuration -- thus showing that Figure~\ref{picbifur} is valid for small $\lambda$. The method used to prove uniqueness applies to a quite wide class of problems, and we prove a general uniqueness result for a class of perturbed quasilinear elliptic systems in Appendix~\ref{a_unique}. 

In a second step, we perform a bifurcation analysis and show that there is indeed a symmetric pitchfork bifurcation, at least when the reduced temperature $\theta$ is close to $\theta=-8$ (the special value at which \cite[Fig. 8]{bisi03} was obtained). More specifically, we prove the following result.
\vskip .3cm
\noindent\textbf{Theorem.} \textit{
Let $\theta\approx -8$. Consider, for small $\lambda$, the unique solution $\chi_\lambda$. The branch of eigenvalue exchange solutions $\lambda\mapsto\chi_\lambda$ may be extended smoothly to larger $\lambda$, and loses stability at a critical value $\lambda_c$. At this point, a symmetric pitchfork bifurcation occurs.
}
\vskip .3cm
More precisely, we prove first the above Theorem in the case $\theta=-8$. Then we identify the properties that make this special case work, which leads to an abstract result of the form: if $\theta$ satisfies some properties, then bifurcation occurs. And eventually we check that those properties are stable: if a $\theta_0$ satisfies them, they propagate to nearby $\theta\approx\theta_0$. In particular we obtain the above Theorem.

The plan of the paper is the following. In Section~\ref{s_mod} we present the precise model used to describe the cell. In Section~\ref{s_ee} we discuss the existence and some properties of eigenvalue exchange configurations. In Section~\ref{s_lim} we study the limits of large and small cell thickness. Then we concentrate on the unique branch of eigenvalue exchange solutions starting from small $\lambda$, and show that a symmetric pitchfork bifurcation occurs. We treat the case $\theta=-8$ in Section~\ref{s_theta-8}, and the perturbed case $\theta\approx -8$ in  Section~\ref{s_perturb}.

The author thanks P.~Mironescu for his support and advice, P.~Bousquet for showing him the proof of Lemma~\ref{unique_l3}, and A.~Zarnescu for bringing this problem and the article \cite{bisi03} to his attention.

\section{Model}\label{s_mod}

The cell consists in nematic material confined between two parallel bounding plates, with competing strong anchoring conditions on each plate. In an orthonormal basis $(\overrightarrow{e_x},\overrightarrow{e_y},\overrightarrow{e_z})$, the bounding plates are perpendicular to $\overrightarrow{e_x}$ and parallel to the $(y,z)$ plane. The width of the cell is $2d$: one plate at $x=-d$, the other at $x=d$. On the left plate ($x=-d$) the boundary condition is uniaxial with director $\overrightarrow{e_z}$, and on the right plate ($x=d$) the boundary condition is uniaxial with director $\overrightarrow{e_y}$ (see Figure~\ref{picEEBD}).

Nematic order is described by means of de Gennes' $Q$-tensor -- a traceless symmetric $3\times 3$ matrix --, and Landau-de Gennes free energy density
\begin{equation*}
e(Q)=\frac{L}{2}|\nabla Q|^2 + f_b(Q),
\end{equation*}
where the bulk energy density $f_b$ is given by
\begin{equation*}
f_b(Q)=\frac{a(T)}{2}|Q|^2 -\frac{b}{3}\mathrm{tr}(Q^3)+\frac{c}{4}|Q|^4.
\end{equation*}

In \cite{palffymuhoraygartlandkelly94,bisi03}, the numerical simulations are performed under two symmetry restrictions~: the $Q$-tensor depends only on $x$, and $\overrightarrow{e_x}$ is always an eigenvector. These restrictions are natural, since the system is invariant in the $x$ and $y$ directions, and since $\overrightarrow{e_x}$ is an eigenvector of the boundary conditions. It is not our goal here to justify rigorously the validity of these symmetry assumptions~: we will, from the beginning, consider $Q$-tensors depending only on $x$, with $\overrightarrow{e_x}$ as an eigenvector. 

More precisely, we will study maps
\begin{equation}\label{q_123}
Q(x)=\left(
\begin{array}{ccc}
-2q_1(x) & 0 & 0 \\
0 & q_1(x)-q_2(x) & q_3(x) \\
0 & q_3(x) & q_1(x)+q_2(x)
\end{array}
\right),\quad x\in [-d,d]
\end{equation}
minimizing the energy functional
\begin{equation*}
E(Q)=\int_{-d}^d \left( \frac{L}{2}|Q'|^2 + f_b(Q) \right) dx,
\end{equation*}
when subject to boundary conditions
\begin{equation*}
Q(-d)=\left(
\begin{array}{ccc}
-2q_+ & 0 & 0 \\
0 & -2q_+ & 0 \\
0 & 0 & 4q_+
\end{array}
\right),\quad 
Q(-d)=\left(
\begin{array}{ccc}
-2q_+ & 0 & 0 \\
0 & 4q_+ & 0 \\
0 & 0 & -2q_+
\end{array}
\right).
\end{equation*}
Here, $q_+$ is such that the boundary conditions minimize $f_b$.

After an appropriate rescaling \cite{bisi03}, we may actually consider a dimensionless version of the problem, where we are left with only two parameters: a reduced temperature $\theta \in (-\infty,1)$, and a reduced elastic constant $1/\lambda^2$. The parameter $\lambda>0$ is proportional to  $d/\sqrt{L}$~: it accounts for the effects of the elastic constant $L$, and of the distance between the plates $d$. From now on we will work with the reduced free energy
\begin{equation}\label{E_lambda}
E_\lambda (Q) = \int_{-1}^1 \left(\frac{1}{2\lambda^2}|Q'|^2 + f(Q) \right)dx,
\end{equation}
where
\begin{equation}\label{f}
\begin{split}
f(Q) & =\frac{\theta}{6}|Q|^2 -\frac{2}{3}\mathrm{tr}(Q^3) + \frac{1}{8} |Q|^4 + c(\theta) \\
& = \frac{\theta}{3}(3q_1^2 + q_2^2+q_3^2)+4q_1(q_1^2-q_2^2-q_3^2)+\frac{1}{2}(3q_1^2+q_2^2+q_3^2)^2 +c(\theta).
\end{split}
\end{equation}
Here the constant $c(\theta)$ is choosen in such a way that $\min f= 0$. Note that this minimum is attained exactly \cite{majumdarzarnescu10} at uniaxial $Q$-tensors of the form
\begin{equation*}
Q= 6q_+ \left(n\otimes n-\frac{1}{3}I\right),\quad n\in\mathbb{S}^2,\; 6q_+ = 1 + \sqrt{1-\theta}.
\end{equation*}
Although we do not emphasize it in the notation, the free energy obviously depends on $\theta$.

The direct method of the calculus of variations applies to the energy functional \eqref{E_lambda} in the natural space $H^1(-1,1)^3$. Hence minimizers always exist.
They are critical points of the energy, and as such they satisfy the Euler-Lagrange equation
\begin{equation}\label{EL_Q}
\frac{1}{\lambda^2} Q'' = \frac{\theta}{3} Q - 2 \left( Q - \frac{|Q|^2}{3} I \right) +\frac{1}{2} |Q|^2 Q.
\end{equation}
Solutions of \eqref{EL_Q} are analytic, and they satisfy the maximum principle \cite{majumdarzarnescu10}
\begin{equation}\label{maxpp_Q}
|Q|\leq 2\sqrt{6}q_+.
\end{equation}
In terms of $q_1$, $q_2$ and $q_3$ defined by \eqref{q_123}, the Euler-Lagrange equation \eqref{EL_Q} becomes the system
\begin{equation}\label{EL_q}
\left\lbrace
\begin{split}
\frac{1}{\lambda^2}q_1'' & = \frac{\theta}{3}q_1 - \frac{2}{3}(q_2^2+q_3^2-3q_1^2)+(3q_1^2+q_2^2+q_3^2) q_1\\
\frac{1}{\lambda^2}q_2'' & = \frac{\theta}{3}q_2 - 4q_1q_2 +(3q_1^2+q_2^2+q_3^2)q_2\\
\frac{1}{\lambda^2}q_3'' & = \frac{\theta}{3}q_3 - 4q_1q_3 + (3q_1^2+q_2^2+q_3^2) q_3
\end{split}\right.
\end{equation}
and the boundary conditions read
\begin{equation}\label{bc}
\begin{aligned}
q_1(-1) & = q_+, & q_1(1)  & = q_+, \\
q_2(-1) & = 3 q_+, & q_2(1) & = -3q_+, \\
q_3(-1) & = 0, & q_3(1) & = 0.
\end{aligned}
\end{equation}

In the sequel we will denote by $\mathcal H$ the space of all admissible configurations, i.e. the space of $H^1$ configurations satisfying the boundary conditions. Thus $\mathcal H$ is an affine subspace of $H^1(-1,1)^3$, consisting of all $Q$-tensors of the form \eqref{q_123}, which satisfy the boundary conditions \eqref{bc}.

\section{Eigenvalue exchange configurations}\label{s_ee}

Consider the group $G$ defined as the subgroup of $O(3)$ generated by the matrices $S_y$ and $S_z$ of the orthogonal reflections with respect to the axes $\mathbb R\overrightarrow{e_y}$ and $\mathbb R \overrightarrow{e_z}$. As a subgroup of $O(3)$, $G$ acts naturally on symmetric traceless matrices, and thus on $H^1(-1,1)^3$, via the following formula:
\begin{equation*}
(R\cdot Q)(x) =  R  Q(x) {}^tR,\qquad R\in G.
\end{equation*}

One easily sees that the affine subspace $\mathcal H \subset H^1(-1,1)^3$ of admissible configurations is stable under this action: if $Q$ satisfies the boundary conditions \eqref{bc} then $R\cdot Q$ satisfies them also, for $R\in G$. Thus $G$ acts on $\mathcal H$.

Moreover, the free energy functional $E_\lambda$ is invariant under this action:
\begin{equation*}
E_\lambda (R\cdot Q) = E_\lambda (Q) \qquad \forall R \in G,\; Q\in H_{bc}^1(-1,1)^3.
\end{equation*}

Therefore the principle of symmetric criticality \cite{palais79} ensures that critical points among $G$-invariant configurations are critical points of $E_\lambda$, that is solutions of the Euler-Lagrange system \eqref{EL_q}.

More precisely, we denote by $\mathcal H^{ee}$ the affine subspace of $\mathcal H$ consisting of all invariant configurations, and by $E_\lambda^{ee}=E_\lambda |_{\mathcal H^{ee}}$ the free energy functional restricted to invariant configurations. It is straightforward to check that
\begin{equation*}
\mathcal H^{ee} = \left\lbrace Q\in\mathcal H \: ; \: q_3 \equiv 0 \right\rbrace,
\end{equation*}
and the principle of symmetric criticality simply asserts that critical points of $E_\lambda^{ee}$ correspond to solutions of \eqref{EL_q} with $q_3\equiv 0$. Of course this fact could also be checked by a direct computation.

The elements of $\mathcal H^{ee}$ are the eigenvalue exchange configurations, since $\chi\in \mathcal H^{ee}$ corresponds to $(q_1,q_2)$ via
\begin{equation*}
\chi(x)=\left(
\begin{array}{ccc}
-2q_1(x) & 0 & 0 \\
0 & q_1(x)-q_2(x) & 0 \\
0 & 0 & q_1(x)+q_2(x)
\end{array}
\right),\quad x\in [-1,1].
\end{equation*}
The free energy of such a $\chi\in\mathcal H^{ee}$ is given by
\begin{equation}\label{E_ee}
\begin{split}
E_\lambda^{ee}(\chi) & =\int_{-1}^1 \left(\frac{1}{2\lambda^2}|\chi'|^2 + f(\chi)\right)dx \\
& = \int_{-1}^1 \Bigg(
\frac{3(q_1')^2+(q_2')^2}{\lambda^2}+\frac{\theta}{3}(3q_1^2 + q_2^2)\\
& \quad\quad\quad\quad +4q_1(q_1^2-q_2^2)+\frac{1}{2}(3q_1^2+q_2^2)^2 +c(\theta)
\Bigg) dx,
\end{split}
\end{equation}
and critical points of $E_\lambda^{ee}$ solve the boundary value problem
\begin{equation}\label{EL_ee}
\left\lbrace
\begin{split}
\frac{1}{\lambda^2}q_1'' & = \frac{\theta}{3}q_1 - \frac{2}{3}(q_2^2-3q_1^2)+(3q_1^2+q_2^2) q_1,\\
\frac{1}{\lambda^2}q_2'' & = \frac{\theta}{3}q_2 - 4q_1q_2 +(3q_1^2+q_2^2)q_2,\\
q_1(\pm 1) & = q_+,  \\
q_2(\pm 1) & = \mp 3 q_+. \\
\end{split}\right.
\end{equation}

Since the direct method of the calculus of variations applies to $E_\lambda^{ee}$, there always exists an eigenvalue exchange minimizer, which is an equilibrium configuration in $\mathcal H$. This eigenvalue exchange equilibrium is stable in $\mathcal H^{ee}$, but need not be stable as an equilibrium among all admissible configurations: in principle, symmetry-breaking perturbations may induce a negative second variation of the total free energy $E_\lambda$. To study this phenomenon we need to understand the structure of that second variation.

Consider a family $\chi_\lambda = (q_{1,\lambda},q_{2,\lambda})$ of eigenvalue exchange configurations. That is, $\chi_\lambda$ is a critical point of $E_\lambda^{ee}$, and hence also of $E_\lambda$. The Principle of symmetric criticality (see Appendix~\ref{a_sc}) ensures that the orthogonal decomposition
\begin{equation*}
H^{1}_0(-1,1)^3 = H_{sp} \oplus H_{sb} = \lbrace (h_1,h_2,0) \rbrace \oplus \lbrace (0,0,h_3) \rbrace,
\end{equation*}
corresponding to the decomposition into `symmetry-preserving' perturbations and `symmetry-breaking' perturbations, is also orthogonal for the bilinear form $D^2 E_\lambda(\chi_\lambda)$. Namely, for $H\in H_0^1(-1,1)^3$,
\begin{equation*}
\begin{split}
D^2 E_\lambda (\chi_\lambda) [H] & = D^2 E_\lambda (\chi_\lambda) [(h_1,h_2,0)] + D^2 E_\lambda (\chi_\lambda) [(0,0,h_3)]\\
& = \Phi_\lambda [h_1,h_2] + \Psi_\lambda[h_3].
\end{split}
\end{equation*}
Here $\Phi=\Phi_\lambda$ and $\Psi=\Psi_\lambda$ are quadratic forms defined on $H_0^1(-1,1)^2$, respectively $H_0^1(-1,1)$, by the above equality. Note that $\Phi_\lambda$ is nothing else than $D^2E_\lambda^{ee}(\chi_\lambda)$, the second variation of restricted free energy. From the computations in Appendix~\ref{a_sv} we obtain
\begin{equation}\label{Phi}
\begin{split}
\Phi[h_1,h_2] & = \int_{-1}^1 \Bigg\lbrace \frac{6(h_1')^2+2(h_2')^2}{\lambda^2}
+6\left(\frac{\theta}{3}+2q_1+9q_1^2+q_2^2\right)h_1^2 \\
& \quad\quad\quad\quad + 2\left(\frac{\theta}{3}-4q_1 + 3q_1^2+3q_2^2\right)h_2^2 + 8q_2(3q_1-2)h_1h_2 \Bigg\rbrace dx
\end{split}\end{equation}
and
\begin{equation}
\label{Psi}
\begin{split}
\Psi[h_3] & = \int_{-1}^1\Bigg\lbrace \frac{2 (h_3')^2}{\lambda^2} + 2\left(\frac{\theta}{3}-4q_1+3q_1^2+q_2^2\right)h_3^2\Bigg\rbrace dx.
\end{split}\end{equation}

To the quadratic forms $\Phi_\lambda$ and $\Psi_\lambda$, we may associate bounded linear operators $\mathcal M_\lambda : H_0^1(-1,1)^2 \to H^{-1}(-1,1)^2$ and $\mathcal L_\lambda:H_0^1(-1,1)\to H^{-1}(-1,1)$ such that
\begin{equation}\label{linearized}
\langle\mathcal M_\lambda (h_1,h_2),(h_1,h_2)\rangle=\Phi_\lambda[h_1,h_2] \quad\text{and}\quad \langle\mathcal L_\lambda h_3,h_3\rangle=\Psi_\lambda[h_3].
\end{equation}

Of particular interest to us will be the first eigenvalues of this operators, since they measure the local stability of the eigenvalue exchange equilibrium. We will denote the first eigenvalue of $\mathcal M_\lambda$ (respectively $\mathcal L_\lambda$) by $\nu(\lambda)$ (respectively $\mu(\lambda)$). They are given by the following formulas:
\begin{equation}\label{numu}
\nu(\lambda)=\inf \frac{\Phi_\lambda [h_1,h_2]}{\int (h_1^2+h_2^2)},\qquad \mu(\lambda)=\inf \frac{\Psi_\lambda[h]}{\int h^2}.
\end{equation}

\section{The limits of very large and very small cell thickness}\label{s_lim}

So far, we know that there always exists an eigenvalue exchange solution. However, as the cell thickness grows larger, the numerics in \cite{palffymuhoraygartlandkelly94,bisi03} predict the existence of a bent director solution, that is, a solution of \eqref{EL_q} with $q_3\neq 0$. In addition this solution should be approximately uniaxial. In Proposition~\ref{lambdainfty} below we study the limiting behaviour of minimizers as $\lambda$ grows to infinity, and obtain in fact a convergence towards a uniaxial tensor. In particular the minimizer can not stay in $\mathcal H^{ee}$, thus for large $\lambda$ there do exist other solutions than the eigenvalue exchange minimizer.

Before stating the result, we should remark that, due to the symmetry of the energy functional, any solution with $q_3\neq 0$ automatically gives rise to another, distinct solution. Recall indeed from Section~\ref{s_ee} that $E_\lambda$ is $G$-invariant, where $G$ is the subgroup of $O(3)$ generated by the orthogonal reflections $S_y$ and $S_z$ (with respect to the $y$-axis and to the $z$-axis). For a $Q$-tensor associated to $(q_1,q_2,q_3)$ via \eqref{q_123}, it holds
\begin{equation*}
S_y \cdot Q = \left( \begin{array}{ccc} -2q_1 & 0 & 0 \\ 0 & q_1-q_2 & -q_3 \\ 0 & -q_3 & q_1+q_2 \end{array}\right).
\end{equation*}
Therefore, if $Q$ is a solution of \eqref{EL_q}, then the $Q$-tensor with opposite $q_3$ is also solution of \eqref{EL_q}. Moreover, those two solutions $Q$ and $S_y\cdot Q$ have same energy. That is why, when studying the limit of minimizers of $E_\lambda$ in Proposition~\ref{lambdainfty} below, we will restrict ourselves to $Q$-tensors satisfying, say, $q_3(0)\geq 0$, to ensure the uniqueness of the limit.

The limit of a small elastic constant -- which corresponds to a large $\lambda$ -- has already been studied in \cite{majumdarzarnescu10} in the three dimensional case, and in \cite{golovatymontero13} in the two dimensional case. The one dimensional case considered in the present article is particularly simple and we obtain the following result.

\begin{prop}\label{lambdainfty}
Let $Q_\lambda$ be a minimizer of $E_\lambda$, with $q_3(0)\geq 0$. It holds 
\begin{equation*}Q_{\lambda}\to Q_*\quad\text{ in }H^1
\end{equation*}
as $\lambda$ tends to $+\infty$, where
\begin{equation*}
Q_*(x)=6q_+\left( n_*(x)\otimes n_*(x) -\frac{1}{3}I \right),\quad n_*(x)=\left(
\begin{array}{c}
0 \\ 
\cos\left( \frac{\pi}{4}-\frac{\pi}{4}x\right)\\
\sin\left(\frac{\pi}{4}-\frac{\pi}{4}x \right)
\end{array}\right)
\end{equation*}
\end{prop}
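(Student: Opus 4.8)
The plan is to study the limit $\lambda\to+\infty$ by a $\Gamma$-convergence / direct compactness argument combined with the uniqueness of the limiting profile. First I would observe that, by the maximum principle \eqref{maxpp_Q}, the minimizers $Q_\lambda$ are uniformly bounded in $L^\infty$. Testing the minimality of $Q_\lambda$ against a fixed competitor (for instance the uniaxial rotation $Q_*$ itself, which lies in $\mathcal H$), we get $E_\lambda(Q_\lambda)\le E_\lambda(Q_*) = \frac{1}{2\lambda^2}\int|Q_*'|^2 \to 0$, since $f(Q_*)\equiv 0$ because $Q_*$ takes values in the minimum set of $f$. Hence $\int_{-1}^1 f(Q_\lambda)\to 0$ and $\frac{1}{\lambda^2}\int_{-1}^1 |Q_\lambda'|^2 \to 0$. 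The first bound, together with the $L^\infty$ bound and the structure of the zero set of $f$ (the uniaxial tensors $6q_+(n\otimes n-\frac13 I)$, $n\in\mathbb S^2$, see \cite{majumdarzarnescu10}), forces $Q_\lambda$ to be close, in $L^2$ and hence by interpolation in every $L^p$, to the set of uniaxial $Q$-tensors.

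Next I would upgrade this to $H^1$ convergence toward a \emph{specific} limit. The uniform $L^\infty$ bound plus the energy bound do not immediately give $H^1$ compactness, so instead I would argue directly on the ODE. Rewrite \eqref{EL_Q} as $Q_\lambda'' = \lambda^2 \nabla f(Q_\lambda)$; since $\int f(Q_\lambda)\to0$ and $Q_\lambda$ is confined near the (smooth, compact) zero manifold $\mathcal N$ of $f$, and since near $\mathcal N$ the potential $f$ behaves quadratically in the normal directions (nondegeneracy of the minimum, again from \cite{majumdarzarnescu10}), one shows the normal component of $Q_\lambda$ decays and the tangential part converges. Concretely, writing $Q_\lambda = 6q_+(n_\lambda\otimes n_\lambda - \frac13 I) + (\text{error})$ with $n_\lambda:[-1,1]\to\mathbb S^2$ and error $\to 0$ in $H^1$, the energy reduces to leading order to $\frac{\text{const}}{\lambda^2}\int |n_\lambda'|^2$, whose minimizers subject to the boundary data $n_\lambda(-1)=\pm e_z$, $n_\lambda(1)=\pm e_y$ are the geodesics on $\mathbb S^2$ joining $e_z$ to $e_y$, i.e. the constant-speed great-circle arcs. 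The normalization $q_3(0)\ge0$ selects, among the two geodesics (the short quarter-arc and its reverse, and the choices of sign of $n$ which do not matter since $Q$ depends on $n$ only through $n\otimes n$), the arc $n_*(x)=(0,\cos(\frac\pi4-\frac\pi4 x),\sin(\frac\pi4-\frac\pi4 x))$, whose $y,z$ components give exactly the off-diagonal structure with $q_3\ge 0$ on $[-1,0]$. This pins down the limit and gives convergence of the full sequence (not just subsequences).

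The main obstacle, I expect, is making the "$H^1$ convergence" step rigorous rather than just the weaker $L^2$/energy statements: one must control the $H^1$ norm of the difference $Q_\lambda - Q_*$, which requires quantitative information on how fast $Q_\lambda$ approaches the zero manifold $\mathcal N$ and on the rate $\frac{1}{\lambda^2}\int|Q_\lambda'|^2\to0$. A clean way to do this is an energy-comparison argument: since $Q_*$ is the unique geodesic-type competitor with $f\equiv 0$ and the correct boundary conditions, and $E_\lambda(Q_\lambda)\le E_\lambda(Q_*)$, one expands $E_\lambda(Q_\lambda)$ around $Q_*$ and uses the nondegeneracy (coercivity of the second variation of $\int f$ transverse to $\mathcal N$, plus the fact that $Q_*$ minimizes the tangential Dirichlet energy) to bound $\|Q_\lambda - Q_*\|_{H^1}^2$ by a constant times $(E_\lambda(Q_\lambda) - E_\lambda(Q_*)) + o(1) \le o(1)$. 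In the genuinely one-dimensional setting this is elementary: the $H^1$ bound $\frac1{\lambda^2}\|Q_\lambda'\|_{L^2}^2\le \frac1{\lambda^2}\|Q_*'\|_{L^2}^2 + o(\lambda^{-2})$ is not by itself enough, so the argument must pass through the reduced geodesic problem on $\mathbb S^2$ and its strict minimality, which is where the work lies; everything else is routine.
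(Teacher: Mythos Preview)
Your overall strategy is correct and close in spirit to the paper's, but you are making the $H^1$ step harder than it is, and the specific route you propose for it would run into trouble.

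The point you miss is that the energy comparison gives more than $\tfrac{1}{\lambda^2}\int|Q_\lambda'|^2\to 0$: since $f\ge 0$ and $f(Q_*)\equiv 0$, the inequality $E_\lambda(Q_\lambda)\le E_\lambda(Q_*)$ reads
\[
\int_{-1}^1|Q_\lambda'|^2 + 2\lambda^2\!\int_{-1}^1 f(Q_\lambda)\;\le\;\int_{-1}^1|Q_*'|^2,
\]
so $\int|Q_\lambda'|^2\le\int|Q_*'|^2$ is a \emph{uniform} $H^1$ bound. This immediately yields weak $H^1$ (hence, in 1D, uniform) subsequential convergence to some $Q_0$ with $f(Q_0)\equiv 0$. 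Testing against any competitor $\tilde Q\in\mathcal H$ with $f(\tilde Q)\equiv 0$ shows $\int|Q_0'|^2\le\int|\tilde Q'|^2$, so $Q_0$ minimizes the Dirichlet energy on the uniaxial manifold; and then $\limsup\int|Q_\lambda'|^2\le\int|Q_*'|^2=\int|Q_0'|^2\le\liminf\int|Q_\lambda'|^2$ upgrades weak to strong $H^1$ convergence by norm convergence. This is exactly what the paper does, citing \cite[Lemma~3]{majumdarzarnescu10} for this compactness step and \cite{ballzarnescu11} for the lifting $Q_0=6q_+(n_*\otimes n_*-\tfrac13 I)$ to a director $n_*\in H^1((-1,1);\mathbb S^2)$; the rest (geodesic identification and the sign selection via $q_3(0)\ge 0$) is as you describe.

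By contrast, your proposed coercivity route---expanding $E_\lambda$ around $Q_*$ and using nondegeneracy of $f$ transverse to $\mathcal N$ plus minimality of $Q_*$ along $\mathcal N$---faces a genuine obstacle: the second variation of $E_\lambda$ degenerates in the tangential directions as $\lambda\to\infty$ (the only control there is $\tfrac{1}{\lambda^2}\int|H'|^2$), so you do not get a $\lambda$-uniform lower bound of the form $E_\lambda(Q_\lambda)-E_\lambda(Q_*)\gtrsim\|Q_\lambda-Q_*\|_{H^1}^2$. You would end up having to pass through the constrained Dirichlet problem anyway. Also, your ``write $Q_\lambda=6q_+(n_\lambda\otimes n_\lambda-\tfrac13 I)+\text{error}$'' step hides the nontrivial lifting issue that the paper handles by citing \cite{ballzarnescu11}; it is cleaner to lift only the limit $Q_0$, not each $Q_\lambda$.
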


\begin{proof}
One proves, exactly as in \cite[Lemma~3]{majumdarzarnescu10}, that there exists a subsequence
\begin{equation*}
Q_{\lambda_k}\longrightarrow Q_* = 6q_+\left( n\otimes n -\frac{1}{3}I \right)\quad\text{ in }H^1,
\end{equation*}
where $Q_*$ minimizes $\int |Q'|^2$ among maps in $\mathcal H$ which are everywhere of the form $Q=6q_+(n\otimes n-I/3)$ -- that is, maps $Q$ in $\mathcal H$ which satisfy $f(Q)=0$ everywhere.

Since $Q_*$ is continuous on $(-1,1)$ it follows from \cite[Lemma~3]{ballzarnescu11} that there exists a unique continuous map $n_*:(-1,1)\to\mathbb S^2$ such that
\begin{equation*}
Q_*(x)=6q_+\left( n_*(x)\otimes n_*(x) -\frac{1}{3}I \right),\quad n_*(-1)=\overrightarrow{e_z}.
\end{equation*}
Moreover, by \cite[Lemma~1]{ballzarnescu11}, the map $n_*$ lies in $H^1(-1,1;\mathbb S^2)$.

Since $Q_*$ minimizes $\int |Q'|^2$, we deduce that $n_*$ minimizes $\int |n'|^2$ among maps $n\in H^1(-1,1;\mathbb S^2)$ satisfying the same boundary conditions as $n_*$.  It holds $n_*(-1)=\overrightarrow{e_z}$, and the boundary conditions on $Q_*$ imply that $n_*(1)=\alpha \overrightarrow{e_y}$ for some $\alpha = \pm 1$. Using the fact that the geodesics on the sphere $\mathbb S^2$ are arcs of large circles, we obtain
\begin{equation*}
n_*(x)=\left(0, \alpha \cos ( \frac{\pi}{4}-\frac{\pi}{4}x ),\sin (\frac{\pi}{4}-\frac{\pi}{4}x  )
\right).
\end{equation*}
On the other hand, since the maps $Q_{\lambda}$ satisfy $q_{3,\lambda}(0)\geq 0$, the limiting map $Q_*$ must satisfy also $q_{3,*}(0)\geq 0$. Since the above formula for $n_*$ implies that
\begin{equation*}
q_{3,*}(0)=6 \alpha q_+ \cos(\pi /4) \sin(\pi /4) = 3\alpha q_+,
\end{equation*}
we conclude that $\alpha\geq 0$, and thus $\alpha=1$. In particular, we obtain the announced formula for $n_*$. Moreover we have shown that the limit
\begin{equation*}
Q_*=\lim Q_{\lambda_k}
\end{equation*}
is uniquely determined, independently of the converging subsequence. Therefore we do actually have
\begin{equation*}
Q_\lambda \longrightarrow Q_*\quad\text{in }H^1,
\end{equation*}
as $\lambda$ tends to $+\infty$.
\end{proof}

Now we turn to studying the case of a very narrow cell. That is, we investigate the limit $\lambda\to 0$. The numerics in \cite{palffymuhoraygartlandkelly94, bisi03} predict that for small $\lambda$, there is only one solution, which is an eigenvalue exchange configuration. This is indeed the content of the next result.

\begin{prop}\label{lambda0}
There exists $\lambda_0>0$, such that for any $\lambda\in (0,\lambda_0)$, $E_\lambda$ admits a unique critical point $\chi_\lambda\in\mathcal H^{ee}$.
\end{prop}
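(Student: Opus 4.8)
The plan is to prove uniqueness for small $\lambda$ by a contraction/energy-comparison argument, exploiting the fact that as $\lambda\to 0$ the elastic term $\frac{1}{2\lambda^2}|Q'|^2$ dominates, so any critical point must be close (in a strong norm) to the unique harmonic-type interpolant of the boundary data, and then showing that near that interpolant the Euler--Lagrange operator is strictly monotone. Concretely, I would first rescale: write $Q(x) = P(x) + R(x)$, where $P$ is the affine (linear in $x$) interpolation of the boundary conditions \eqref{bc}, so $P'' \equiv 0$ and $R \in H^1_0(-1,1)^3$ (in the $(q_1,q_2,q_3)$ coordinates, $R$ lies in the space where the $G$-action is defined). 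The Euler--Lagrange system \eqref{EL_Q} becomes $\frac{1}{\lambda^2} R'' = \Psi(P+R)$ with $R(\pm 1) = 0$, where $\Psi(Q) = \frac{\theta}{3}Q - 2(Q - \frac{|Q|^2}{3}I) + \frac12 |Q|^2 Q$ is the (smooth, polynomial) gradient of $f$.

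Next, I would set up the fixed-point map: $R = \lambda^2 \,\mathcal{G}\big[\Psi(P + R)\big]$, where $\mathcal{G}: H^{-1}(-1,1)^3 \to H^1_0(-1,1)^3$ is the solution operator of $-R'' = g$, $R(\pm1)=0$ (so $\mathcal{G} = (-\partial_{xx})^{-1}$, a bounded operator, in fact bounded into $H^1_0 \cap H^2$ and hence into $C^1$). Because of the maximum principle \eqref{maxpp_Q}, any critical point satisfies $|Q| \le 2\sqrt{6}\,q_+$, so $R$ ranges over a fixed bounded set $B$ in $L^\infty$, hence $\Psi(P+R)$ is uniformly bounded in $L^\infty \subset H^{-1}$ on that set, and $\Psi$ is Lipschitz on $B$ with some constant $C_\Psi$ depending only on $\theta$. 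Then the map $T_\lambda(R) := \lambda^2 \,\mathcal{G}[\Psi(P+R)]$ satisfies $\|T_\lambda(R_1) - T_\lambda(R_2)\|_{H^1_0} \le \lambda^2 \|\mathcal{G}\|\, C_\Psi \|R_1 - R_2\|_{L^\infty} \le \lambda^2 \|\mathcal{G}\|\, C_\Psi\, C_{\mathrm{Sob}} \|R_1 - R_2\|_{H^1_0}$, which is a contraction once $\lambda < \lambda_0 := (\|\mathcal{G}\| C_\Psi C_{\mathrm{Sob}})^{-1/2}$. Since every critical point of $E_\lambda$ in $\mathcal{H}$ corresponds to a fixed point of $T_\lambda$ lying in $B$ (by the maximum principle), and a contraction on a complete metric space (the closed set $B \cap H^1_0$, after checking $T_\lambda$ maps it into itself for small $\lambda$) has a unique fixed point, we get at most one critical point; existence is already guaranteed by the direct method. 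Finally, since the boundary data \eqref{bc} are $G$-invariant and $E_\lambda$ is $G$-invariant, the unique critical point must be fixed by $G$, hence lies in $\mathcal{H}^{ee} = \{q_3 \equiv 0\}$, giving the stated $\chi_\lambda$.

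The main technical point to be careful about is ensuring $T_\lambda$ maps the maximum-principle ball $B$ into itself (or, alternatively, working on all of $H^1_0$ but arguing that any fixed point automatically lies in $B$ and then using uniqueness of the contraction fixed point on a well-chosen invariant set): one wants the contraction argument and the a priori bound to be compatible. A clean way around this is to note that $\Psi$ can be modified outside $B$ (truncated) without affecting solutions satisfying \eqref{maxpp_Q}, making $\Psi$ globally Lipschitz; then $T_\lambda$ is a global contraction on $H^1_0(-1,1)^3$ for $\lambda < \lambda_0$, its unique fixed point is the unique critical point of the truncated energy, and by the maximum principle it coincides with the unique critical point of $E_\lambda$. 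I expect this truncation bookkeeping to be the only real subtlety; everything else is the standard small-parameter contraction. (The paper's Appendix~\ref{a_unique} presumably carries out exactly this kind of argument in a general quasilinear setting, so here one may simply invoke that general result after checking the hypotheses — coercivity of the leading operator, smoothness and the a priori $L^\infty$ bound — are met.)
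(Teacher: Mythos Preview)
Your proof is correct but takes a different route from the paper's Appendix~\ref{a_unique}. There, Theorem~\ref{unique} is proved via \emph{strict convexity}: on the $L^\infty$-ball supplied by a maximum-principle lemma (Lemma~\ref{unique_l2}), one shows $E_\lambda$ is strictly midpoint-convex for small $\lambda$, because the Dirichlet term contributes $-c\lambda^{-2}\|u-v\|_{L^2}^2$ via Poincar\'e, dominating the bounded non-convexity of $\int f$ controlled by $\|f\|_{W^{2,\infty}(B_C)}$. You instead cast the Euler--Lagrange system as a fixed-point equation and prove $T_\lambda$ is a contraction; the truncation of $\Psi$ outside the maximum-principle ball is exactly the right device to make the contraction global on $H^1_0$, and since any critical point of the original $E_\lambda$ lands in that ball by \eqref{maxpp_Q}, uniqueness follows. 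Both arguments rest on the same two ingredients---the a priori $L^\infty$ bound and the $\lambda^{-2}$ dominance of the elastic term---and both yield the non-degeneracy of $\chi_\lambda$ used in Proposition~\ref{chilambda}. One genuine difference in scope: the paper's convexity argument works verbatim for domains $\Omega\subset\mathbb R^N$ in any dimension (only Poincar\'e is needed), whereas your Lipschitz estimate for $T_\lambda$ relies on the one-dimensional embedding $H^1_0\hookrightarrow L^\infty$. Your deduction that the unique critical point lies in $\mathcal H^{ee}$ by $G$-invariance matches the paper's.
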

\begin{proof}
The uniqueness is a consequence of a more general result, stated as Theorem~\ref{unique} in Appendix~\ref{a_unique}. The fact that the unique solution belongs to $\mathcal H^{ee}$ is immediate from the considerations in Section~\ref{s_ee}, since there always exists a solution $\chi\in\mathcal H^{ee}$.
\end{proof}

Proposition~\ref{lambda0} provides us with a family of solutions 
\begin{equation*}
(0,\lambda_0)\ni\lambda\mapsto\chi_\lambda\in\mathcal H^{ee}.
\end{equation*}
The next result gives further properties of this branch of solutions. Recall from \eqref{numu} the definitions of $\nu(\lambda)$ and $\mu(\lambda)$: $\nu$ is the first eigenvalue of $D^2E_\lambda^{ee}(\chi_\lambda)$, and $\mu$ is the first eigenvalue of $D^2E_\lambda(\chi_\lambda)$ restricted to symmetry-breaking perturbations.

\begin{prop}\label{chilambda}
The map $\lambda \mapsto \chi_\lambda$ is smooth and can be extended uniquely to a smooth map of eigenvalue exchange solutions
\begin{equation*}
(0,\lambda_*)\to\mathcal H^{ee},\;\lambda\mapsto\chi_\lambda,
\end{equation*}
where $\lambda_*\in [\lambda_0,+\infty]$ is determined by the following property:
\begin{equation}\label{lambda*}
\big(\:\nu(\lambda)>0 \quad\forall\lambda\in (0,\lambda_*)\: \big)\quad\text{and}\quad
\big(\:\lambda_*=+\infty\text{ or }\nu(\lambda_*)=0 \: \big)
\end{equation}
Moreover, the map $\lambda\mapsto \mu(\lambda)$ is smooth on $(0,\lambda_*)$.
\end{prop}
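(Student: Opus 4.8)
The plan is to run a standard continuation/implicit-function-theorem argument for the branch $\lambda\mapsto\chi_\lambda$, using nondegeneracy of the linearized operator $\mathcal M_\lambda$ as the mechanism that keeps the branch alive, and then to transfer the resulting smoothness to $\mu(\lambda)$ by viewing it as the bottom eigenvalue of an analytic family of self-adjoint operators. First I would recast the problem \eqref{EL_ee} as $F(\lambda,\chi)=0$, where $F:(0,\infty)\times\mathcal H^{ee}\to H^{-1}(-1,1)^2$ sends $(\lambda,\chi)$ to $\lambda^{-2}\chi''$ minus the nonlinear zeroth-order terms; equivalently, after subtracting a fixed affine lift of the boundary data, $F$ is a smooth (indeed real-analytic) map between Banach spaces, and its partial differential $D_\chi F(\lambda,\chi_\lambda)$ is exactly the operator $\mathcal M_\lambda$ of \eqref{linearized}. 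By Proposition~\ref{lambda0}, on $(0,\lambda_0)$ the solution $\chi_\lambda$ is the unique critical point, so in particular it is a nondegenerate minimizer and $\nu(\lambda)>0$ there; hence $\mathcal M_\lambda$ is invertible and the implicit function theorem gives local smoothness of $\lambda\mapsto\chi_\lambda$ on $(0,\lambda_0)$, together with smooth local extension wherever $\nu(\lambda)>0$.

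Next I would define $\lambda_*$ as the supremum of all $\Lambda>\lambda_0$ such that the branch extends smoothly to $(0,\Lambda)$ with $\nu(\lambda)>0$ on that interval; by the IFT step this set is open, so the branch does extend to $(0,\lambda_*)$ with $\nu>0$ throughout, which is the first half of \eqref{lambda*}. For the second half I would argue by contradiction: if $\lambda_*<\infty$ and $\nu(\lambda_*)>0$, then first one must check that $\chi_\lambda$ does not blow up as $\lambda\uparrow\lambda_*$ — this follows from the maximum principle \eqref{maxpp_Q}, which gives a uniform $L^\infty$ bound on $Q_\lambda$ hence on $\chi_\lambda$, and then elliptic estimates on \eqref{EL_ee} upgrade this to a uniform $C^{2,\alpha}$ (indeed $H^2$) bound, so along any sequence $\lambda_k\uparrow\lambda_*$ a subsequence of $\chi_{\lambda_k}$ converges to some $\chi_{\lambda_*}\in\mathcal H^{ee}$ solving \eqref{EL_ee} at $\lambda_*$; continuity of the coefficients then forces $\nu(\lambda_k)\to\nu(\lambda_*)>0$, so $\mathcal M_{\lambda_*}$ is invertible, and a final application of the IFT at $(\lambda_*,\chi_{\lambda_*})$ extends the branch past $\lambda_*$ — contradicting maximality. (The limit $\chi_{\lambda_*}$ is independent of the subsequence precisely because $\mathcal M_{\lambda_*}$ is invertible, so the IFT solution near $\lambda_*$ is unique.) This establishes the dichotomy in \eqref{lambda*}, and uniqueness of the extension is immediate: two smooth branches of eigenvalue exchange solutions agreeing on $(0,\lambda_0)$ must agree on the whole interval, since wherever they are both defined and $\nu>0$ the IFT gives local uniqueness.

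For the last assertion, smoothness of $\mu$ on $(0,\lambda_*)$, I would invoke analytic perturbation theory for self-adjoint operators (Kato/Rellich). On $(0,\lambda_*)$ the coefficients of $\Psi_\lambda$ in \eqref{Psi} depend smoothly on $\lambda$ (through $\lambda^{-2}$ and through the just-established smooth dependence of $q_{1,\lambda},q_{2,\lambda}$), so $\mathcal L_\lambda$ is a smooth family of self-adjoint operators with compact resolvent on $L^2(-1,1)$, bounded below, with form domain $H^1_0$; its lowest eigenvalue $\mu(\lambda)$ is simple — the ground state of a one-dimensional Schrödinger-type operator with Dirichlet conditions is always simple, with a positive eigenfunction — and a simple isolated eigenvalue of a smooth self-adjoint family depends smoothly on the parameter. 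Hence $\mu\in C^\infty(0,\lambda_*)$.

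The main obstacle is the compactness step at $\lambda_*$: one must rule out loss of regularity or degeneration of the solution as $\lambda\uparrow\lambda_*$ before the IFT can be reapplied. Here the $L^\infty$ bound \eqref{maxpp_Q} is the crucial input — it is uniform in $\lambda$ — and it must be promoted to a compactness-yielding bound (e.g. in $H^2$, using the equation and the uniform bound on the right-hand side, which is a polynomial in $q_1,q_2$); the remaining points (simplicity of the ground state, openness of the continuation set) are routine.
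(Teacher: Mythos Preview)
Your approach is essentially the paper's: implicit function theorem to continue the branch as long as $\nu(\lambda)>0$, then simplicity of the ground state of $\mathcal L_\lambda$ to get smoothness of $\mu$. One logical slip to fix: uniqueness of the critical point on $(0,\lambda_0)$ does \emph{not} by itself give nondegeneracy (think of $x\mapsto x^4$); the paper obtains $\nu(\lambda)>0$ for small $\lambda$ by going back into the proof of the uniqueness result, where $\lambda_0$ is chosen so that $E_\lambda$ is strictly convex on the $L^\infty$-ball provided by \eqref{maxpp_Q}, and this (together with Poincar\'e's inequality) yields positive definiteness of $D^2E_\lambda^{ee}(\chi_\lambda)$. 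Two differences in execution are worth recording. First, the paper simply asserts that the branch continues ``until we reach a $\lambda_*$ satisfying \eqref{lambda*}'' without saying anything about $\chi_{\lambda_*}$; your compactness argument (uniform bound from \eqref{maxpp_Q}, bootstrap via the ODE, extraction of a limit solving \eqref{EL_ee} at $\lambda_*$) actually supplies what the paper leaves implicit, and is the right way to make sense of the clause ``$\nu(\lambda_*)=0$''. Second, for the smoothness of $\mu$ the paper invokes \cite[Lemma~1.3]{crandallrabinowitz73} to produce a smooth eigenvalue branch $\tilde\mu$ near any $\lambda_0$, and then proves separately that $\mu$ is continuous in order to identify $\mu$ with $\tilde\mu$; your direct appeal to Kato--Rellich perturbation theory for simple eigenvalues of a smooth self-adjoint family is equivalent and shorter.
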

\begin{proof}
In the proof of Theorem~\ref{unique}, $\lambda_0$ is chosen in such a way that $E_\lambda$ is strictly convex around $\chi_\lambda$, and in particular $D^2E_\lambda(\chi_\lambda)$ is positive for $\lambda\in (0,\lambda_0)$. In fact it is straightforward to check (using Poincar\'{e}'s inequality) that the choice of $\lambda_0$ in the proof of Theorem~\ref{unique} ensures that $D^2E_\lambda(\chi_\lambda)$ is definite positive for $\lambda\in (0,\lambda_0)$. In particular, $D^2E_\lambda^{ee}(\chi_\lambda)$ is definite positive, or equivalently, $\nu(\lambda)>0$ for $\lambda\in (0,\lambda_0)$.

Therefore $\chi_\lambda$ is a non degenerate critical point, and we may apply the implicit function theorem to the smooth map
\begin{equation*}
\mathcal F \colon (0,+\infty)\times \mathcal H^{ee} \to H^{-1}(-1,1)^2,\; (\lambda,\chi)\mapsto D^2 E_\lambda^{ee}(\chi),
\end{equation*}
around a solution $(\lambda,\chi_\lambda)$ of $\mathcal F = 0$, for $\lambda\in(0,\lambda_0)$. Since this solution is unique, we deduce that $\lambda\mapsto\chi_\lambda$ is given by the implicit function theorem and as such, is smooth.

As long as $D^2 E_\lambda(\chi_\lambda)$ stays definite positive, i.e. $\nu(\lambda)>0$, we may apply the implicit function theorem to smoothly extend the map $\lambda\mapsto\chi_\lambda$, until we reach a $\lambda_*$ satisfying \eqref{lambda*}. Note that the extension is unique since for each $\lambda$, $\chi_\lambda$ is a non degenerate -- an thus isolated -- critical point of $E_\lambda^{ee}$.

It remains to prove that $\lambda\mapsto\mu(\lambda)$ is a smooth map. Recall that $\mu(\lambda)$ is the first eigenvalue of the bounded linear operator
\begin{equation}\label{Llambda}
\begin{split}
\mathcal L_\lambda \colon & H_0^1(-1,1)\to H^{-1}(-1,1) \\
& h \mapsto -\frac{2}{\lambda^2}h'' + 2 \left( \frac{\theta}{3}-4q_{1,\lambda}+3q_{1,\lambda}^2 + q_{2,\lambda}^2 \right) h,
\end{split}
\end{equation}
where $(q_{1,\lambda},q_{2,\lambda})=\chi_\lambda$. From the smoothness of $\lambda\mapsto \chi_\lambda$ we deduce easily that $\mathcal L_\lambda$ depends smoothly on $\lambda$. 

Let us fix $\lambda_0\in (0,\lambda_*)$. From the theory of Sturm-Liouville operators, we know that $\mu(\lambda)$ is a simple eigenvalue of $\mathcal L_\lambda$. In fact, in the terminology of \cite[Definition~1.2]{crandallrabinowitz73}, $\mu_0$ is an $i$-simple eigenvalue of $\mathcal L_{\lambda_0}$, where $i:H_0^1\to H^{-1}$ is the injection operator. Indeed, since $\mathcal L_{\lambda_0}$ is Fredholm of index 0  and symmetric, if we fix an eigenfunction $h_0\in H_0^1$, $\int h_0^2 =1$ associated to $\mu_0$, then it holds
\begin{equation*}
\mathrm{Ran}(\mathcal L_{\lambda_0}-\mu_0 i)=\lbrace f\in H^{-1};\: <f,h_0>=0 \rbrace,
\end{equation*}
so that $i h_0\notin\mathrm{Ran}(\mathcal L_{\lambda_0}-\mu_0 i)$ and the $i$-simplicity of $\mu_0$ follows easily.

Therefore we may invoke \cite[Lemma~1.3]{crandallrabinowitz73} to obtain the existence of smooth maps $\lambda\mapsto \tilde \mu (\lambda)$, $\lambda\mapsto h_\lambda$ defined for $\lambda\approx\lambda_0$, such that $\tilde\mu(\lambda)$ is the unique eigenvalue of $\mathcal L_{\lambda_0}$ close enough to $\mu_0$, and $h_\lambda$ a corresponding eigenfunction. 

On the other hand, it can be easily checked that $\lambda\mapsto\mu(\lambda)$ is continuous: upper semi-continuity is obvious since $\mu$ is an infimum of continuous functions, and lower semi-continuity follows from the inequalities
\begin{equation*}
\mu(\lambda_0) \leq \mu(\lambda) + \| \mathcal L_{\lambda_0}-\mathcal L_\lambda \| \|h_\lambda\|_{H^1} \leq \mu(\lambda) + C\| \mathcal L_{\lambda_0}-\mathcal L_\lambda \|,
\end{equation*}
where $h_\lambda\in H_0^1$ is a $L^2$-normalized eigenfunction associated to $\mu(\lambda)$, and $\lambda$ is close to $\lambda_0$. 
(Note that $\|h_\lambda\|_{H^1}$ is bounded since $\langle \mathcal L_\lambda h_\lambda,h_\lambda\rangle$ is bounded.)

Therefore, for $\lambda$ close enough to $\lambda_0$, $\mu(\lambda)$ is close enough to $\mu_0$. Hence by the uniqueness in \cite[Lemma~1.3]{crandallrabinowitz73}, $\mu(\lambda)$ must coincide with $\tilde \mu(\lambda)$. In particular, $\mu$ is smooth.
\end{proof}

Although we did not emphasize this dependence in the notations, everything we have done so far depends on the fixed parameter $\theta\in (-\infty,1)$. In the next section, we choose a special value for this parameter, $\theta=-8$, at which computations are simplified.

\section{The special temperature $\theta=-8$}\label{s_theta-8}

Throughout the present section, we assume that $\theta=-8$. In this case, we are able to say a lot more about the branch of solutions $\lambda\mapsto\chi_\lambda$ obtained in Proposition~\ref{chilambda}.

First of all, we obtain more information about the maximal  value $\lambda_*$ of definition of $\chi_\lambda$,  and about the eigenvalue $\mu(\lambda)$ measuring the stability with respect to symmetry-breaking perturbations. In fact we are going to prove the following theorem, which is the first of two main results in the present section.

\begin{thm}\label{chilambdatheta-8}
Assume that $\theta=-8$. Then $\lambda_*=+\infty$. That is, the unique eigenvalue exchange solution $\chi_\lambda$ for small $\lambda$, can be extended to a smooth branch of eigenvalue exchange solutions
\begin{equation*}
(0,+\infty)\to \mathcal H^{ee},\; \lambda\mapsto \chi_\lambda.
\end{equation*}
with $\nu(\lambda)>0$ for all $\lambda >0$.
Moreover, there exists $\lambda_c>0$ such that
\begin{equation}\label{lambdac}
\mu(\lambda)>0 \quad\forall\lambda\in (0,\lambda_c),\qquad \mu(\lambda_c)=0,\quad\text{and }\mu'(\lambda_c)<0.
\end{equation}
In fact it holds $\mu'(\lambda)<0$ for all $\lambda$, and $\lim_{+\infty}\mu < 0$. 
\end{thm}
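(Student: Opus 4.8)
My plan is to exploit the special structure at $\theta=-8$, where the numerical simplification alluded to in the introduction should manifest as an explicit or nearly-explicit description of the eigenvalue exchange branch $\chi_\lambda=(q_{1,\lambda},q_{2,\lambda})$. The first task is to show $\lambda_*=+\infty$, i.e.\ $\nu(\lambda)>0$ for all $\lambda$. For this I would look for a monotonicity or convexity property of $E_\lambda^{ee}$ that survives at $\theta=-8$: most likely, after the change of variables that makes the boundary data linear, the functional $E_\lambda^{ee}$ restricted to $\mathcal H^{ee}$ becomes (strictly) convex for this particular $\theta$, or at least the second variation $\Phi_\lambda$ in \eqref{Phi} can be bounded below by a positive multiple of the Dirichlet form. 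The coefficients in \eqref{Phi} are $6(\theta/3+2q_1+9q_1^2+q_2^2)$, $2(\theta/3-4q_1+3q_1^2+3q_2^2)$ and cross term $8q_2(3q_1-2)$; I would use the maximum principle bound $|Q|\le 2\sqrt6 q_+$ together with $6q_+=1+\sqrt{1-\theta}=4$ at $\theta=-8$ (so $q_+=2/3$) to get explicit pointwise control, and then argue that the quadratic form in $(h_1,h_2)$ is positive semidefinite pointwise, which combined with the $1/\lambda^2$ Dirichlet term gives $\nu(\lambda)>0$. If pointwise positivity fails, I would instead track $\nu(\lambda)$ as $\lambda$ increases and rule out $\nu(\lambda_*)=0$ by a contradiction argument using the sign of the would-be null eigenfunction.

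**Analysing the symmetry-breaking eigenvalue $\mu(\lambda)$.**
The operator $\mathcal L_\lambda$ in \eqref{Llambda} is a scalar Sturm--Liouville operator $-\tfrac{2}{\lambda^2}h''+2V_\lambda h$ on $(-1,1)$ with Dirichlet conditions, where $V_\lambda=\theta/3-4q_{1,\lambda}+3q_{1,\lambda}^2+q_{2,\lambda}^2$. So $\mu(\lambda)=\tfrac{2}{\lambda^2}\pi^2/4+\text{(lowest eigenvalue of }-\partial_x^2\text{-free part)}$ — more precisely, by scaling $x$, $\mu(\lambda)=2\,\mu_1(\lambda^2 V_\lambda)$ in an obvious sense. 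The key structural fact I expect at $\theta=-8$ is that $V_\lambda$ is related to the EE equations in a way that produces an \emph{explicit eigenfunction}. Indeed, differentiating the $q_2$-equation in \eqref{EL_ee} suggests that a natural candidate test function is built from $q_{2,\lambda}$ or its derivative; more promisingly, the operator governing $h_3$-perturbations is exactly the one obtained by linearising the $q_3$-equation in \eqref{EL_q} at $q_3=0$, and that equation at $\theta=-8$ may decouple. I would compute $\Psi_\lambda[h]$ on a trial function to get $\mu(\lambda)<0$ for large $\lambda$ (using Proposition~\ref{lambdainfty}: as $\lambda\to\infty$, $\chi_\lambda\to\chi_*$ uniaxial, and one computes the limiting potential $V_*$ explicitly from $n_*$, getting a negative principal eigenvalue of the limiting operator $-\partial_x^2$-type problem after rescaling — this is where $\lim_{+\infty}\mu<0$ comes from). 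For small $\lambda$ the $1/\lambda^2$ term dominates so $\mu(\lambda)>0$. Continuity of $\mu$ (Proposition~\ref{chilambda}) then yields a first zero $\lambda_c$.

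**Monotonicity $\mu'(\lambda)<0$ and the main obstacle.**
To upgrade "there is a zero" to the sharper \eqref{lambdac} with $\mu'(\lambda_c)<0$, and in fact $\mu'(\lambda)<0$ for all $\lambda$, I would differentiate the eigenvalue relation. Writing $h_\lambda$ for the $L^2$-normalised principal eigenfunction (smooth in $\lambda$ by the Crandall--Rabinowitz argument already invoked in Proposition~\ref{chilambda}), the Feynman--Hellmann formula gives
\begin{equation*}
\mu'(\lambda)=\langle \tfrac{d}{d\lambda}\mathcal L_\lambda\, h_\lambda,h_\lambda\rangle
=-\frac{4}{\lambda^3}\int_{-1}^1 (h_\lambda')^2\,dx + 2\int_{-1}^1 \frac{dV_\lambda}{d\lambda}\,h_\lambda^2\,dx .
\end{equation*}
The first term is manifestly negative; the difficulty is the second term, which requires knowing the sign of $\tfrac{d}{d\lambda}(-4q_{1,\lambda}+3q_{1,\lambda}^2+q_{2,\lambda}^2)$ — i.e.\ how the EE profile moves as the cell widens. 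I expect the main obstacle to be establishing the correct monotonicity of the EE branch itself: intuitively, as $\lambda$ grows, $\chi_\lambda$ moves from the "interpolating" small-$\lambda$ profile toward the uniaxial limit $\chi_*$, and along the way the potential $V_\lambda$ should decrease pointwise (or at least decrease in the relevant weighted average), making $\mu$ decrease. To make this rigorous I would either (i) derive a differential inequality for $\partial_\lambda\chi_\lambda$ by differentiating \eqref{EL_ee} in $\lambda$ and applying a maximum principle to the linearised system (whose coercivity is exactly $\nu(\lambda)>0$, already in hand), or (ii) exploit a hidden variational monotonicity special to $\theta=-8$ — perhaps that $E_\lambda^{ee}(\chi_\lambda)$, or some explicit functional of $\chi_\lambda$, is monotone in $\lambda$ by an envelope-theorem argument. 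Once $\mu'<0$ everywhere is established, $\lambda_c$ is automatically unique, $\mu(\lambda_c)=0$ with $\mu'(\lambda_c)<0$, and the combination $\lim_{+\infty}\mu<0$ plus strict monotonicity closes the proof. I anticipate the positivity of $\nu$ (Step 1) to be comparatively routine at $\theta=-8$, and the $\mu'<0$ monotonicity to be the genuine crux.
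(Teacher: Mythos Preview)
Your proposal misses the single structural fact that makes $\theta=-8$ special and on which the entire paper's proof rests: at $\theta=-8$ one has $q_+=2/3$, and the eigenvalue exchange branch satisfies $q_{1,\lambda}\equiv 2/3$ identically. Writing $\tilde q_1=q_1-2/3$, the system \eqref{EL_ee} at $\theta=-8$ becomes
\[
\frac{1}{\lambda^2}\tilde q_1''=(4+8\tilde q_1+3\tilde q_1^2+q_2^2)\tilde q_1,\qquad
\frac{1}{\lambda^2}q_2''=(q_2^2-4+3\tilde q_1^2)q_2,
\]
so $\tilde q_1\equiv 0$ solves the first equation for any $q_2$, and $q_{2,\lambda}$ is then the unique solution of the \emph{scalar} problem $q_2''=\lambda^2(q_2^2-4)q_2$, $q_2(\pm1)=\mp2$. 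This reduction is what drives every subsequent step in the paper: (a) the cross term $8q_2(3q_1-2)h_1h_2$ in $\Phi_\lambda$ vanishes because $3q_1-2=0$, so $\Phi_\lambda$ decouples into a manifestly positive form in $h_1$ and $D^2I_\lambda(q_{2,\lambda})$ in $h_2$; (b) the potential $V_\lambda$ in $\mathcal L_\lambda$ reduces to $q_{2,\lambda}^2-4$; (c) the monotonicity $\mu'<0$ and the limit $\lim_{+\infty}\mu<0$ are obtained from a rescaled scalar shooting argument for $\bar q_{2,\lambda}(y)=q_{2,\lambda}(y/\lambda)$, showing $\partial_\lambda\bar q_{2,\lambda}>0$ pointwise and passing to the heteroclinic limit $\bar q_{2,*}$, with the explicit test function $h=\bar q_{2,*}'$ giving the negative sign. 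Your plan to get $\nu>0$ from pointwise positivity of the $2\times2$ matrix using only the crude bound $|Q|\le 2\sqrt6 q_+$ does not go through without the cross term vanishing; and your Feynman--Hellmann computation for $\mu'$ is the right formula, but controlling the sign of $\partial_\lambda V_\lambda$ without the scalar reduction to $q_2$ alone would require monotonicity of a coupled system, which you have no mechanism for.

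There is also a concrete error in your treatment of $\lim_{+\infty}\mu$. You invoke Proposition~\ref{lambdainfty} to claim ``$\chi_\lambda\to\chi_*$ uniaxial'' as $\lambda\to\infty$, but that proposition concerns the \emph{global minimizer} $Q_\lambda$ of $E_\lambda$, which for large $\lambda$ leaves $\mathcal H^{ee}$ and converges to the bent-director map $Q_*$. The eigenvalue exchange branch $\chi_\lambda$ stays in $\mathcal H^{ee}$ and does \emph{not} converge to $Q_*$; its correct large-$\lambda$ behaviour (after rescaling) is the scalar heteroclinic $\bar q_{2,*}$ connecting $\pm2$, and it is this limit---not the uniaxial one---that one must use to produce a negative test value for $\Psi$.
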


In particular, Theorem~\ref{chilambdatheta-8} provides a rigorous justification for part of the bifurcation diagram pictured in Figure~\ref{picbifur}. Namely, there is a smooth branch of eigenvalue exchange solutions defined for all $\lambda$ and loosing stability at some critical value of $\lambda$. See Figure~\ref{picbifurpart} below.

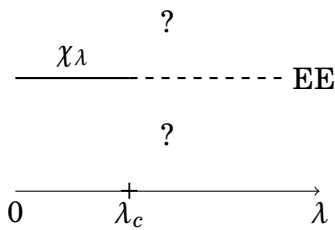
\begin{figure}[!htbp]
\begin{center}
\begin{tikzpicture}[scale=1]

\draw [=>stealth,->] (-1.5,-1.5) node [below] {$0$} --(2.5,-1.5) node [below] {$\lambda$};
\draw (0,-1.5) node {$+$} node [below] {$\lambda_c$};
\draw [thick] (-1.5,0)--(0,0) node [midway,above] {$\chi_\lambda$};
\draw [dashed,thick] (0,0)--(2,0) node [right] {EE};

\draw (.5,-.75) node {?};
\draw (.5,.75) node {?};

\end{tikzpicture}
\caption{\footnotesize The content of Theorem~\ref{chilambdatheta-8}. 
}
\label{picbifurpart}
\end{center}
\end{figure}

The next natural step is to investigate what happens at the critical value $\lambda_c$, where the branch of eigenvalue exchange solutions looses stability. This is the content of the second main result of the present section. Let $h_c\in\mathrm{ker} \mathcal L_{\lambda_c}$ (a perturbation responsible for the loss of stability at $\lambda_c$), and denote by $h_c^\perp\subset H_0^1(-1,1)^3$ the space of perturbations orthogonal to $(0,0,h_c)\in H_0^1(-1,1)^3$.

\begin{thm}\label{bifurtheta-8}
Assume $\theta=-8$. There exist $\delta,\varepsilon>0$ and a neighborhood $\mathfrak A$ of $\chi_{\lambda_c}$ in $\mathcal H$, such that the solutions of
\begin{equation}\label{bifureq}
DE_\lambda(Q)=0,\qquad (\lambda,Q)\in (\lambda_c-\delta,\lambda_c+\delta)\times\mathfrak A,
\end{equation}
are exactly
\begin{equation}\label{bifursol}
Q=\chi_\lambda\quad\text{or}\quad  
\begin{cases}
\lambda=\lambda(t)\\
Q=\chi_{\lambda_c}+t(0,0,h_c)+t^2H_t,
\end{cases}
\text{ for some }t\in (-\varepsilon,\varepsilon)
\end{equation}
where $\lambda(t)\in(\lambda_c-\delta,\lambda_c+\delta)$ and $H_t \in h_c^\perp$ are smooth functions of $t\in (-\varepsilon,\varepsilon)$. Moreover, the  following symmetry properties are satisfied:
\begin{equation}\label{bifursym}
\lambda(-t)=\lambda(t),\quad\text{and}\quad h_{1,-t}=h_{1,t},\: h_{2,-t}=h_{2,t},\: h_{3,-t}=-h_{3,t},
\end{equation}
where $H_t$ is identified with $(h_{1,t},h_{2,t},h_{3,t})$ via \eqref{q_123}.
\end{thm}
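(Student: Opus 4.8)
The plan is to apply a Lyapunov–Schmidt reduction to the equation $DE_\lambda(Q)=0$ near the degenerate solution $(\lambda_c,\chi_{\lambda_c})$, exploiting the $G$-symmetry so that the reduced equation inherits an odd structure and produces a genuine pitchfork. First I would write $Q=\chi_{\lambda_c}+(h_1,h_2,h_3)$ and split $H_0^1(-1,1)^3 = H_{sp}\oplus H_{sb}$ into symmetry-preserving and symmetry-breaking perturbations as in Section~\ref{s_ee}. By Theorem~\ref{chilambdatheta-8} we know $\nu(\lambda)>0$ for all $\lambda$, so $D^2 E_{\lambda_c}(\chi_{\lambda_c})$ restricted to $H_{sp}$ is an isomorphism onto $H^{-1}$; meanwhile on $H_{sb}$ the operator $\mathcal L_{\lambda_c}$ has a one-dimensional kernel spanned by $(0,0,h_c)$, since $\mu(\lambda_c)=0$ is a simple Sturm–Liouville eigenvalue. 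Thus the total kernel of $D^2E_{\lambda_c}(\chi_{\lambda_c})$ is exactly $\mathbb R(0,0,h_c)$, and the situation is a textbook codimension-one bifurcation.

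Next I would perform the reduction. Split the full perturbation as $t(0,0,h_c)+H$ with $H\in h_c^\perp$, and project the Euler–Lagrange equation $DE_\lambda(\chi_{\lambda_c}+t(0,0,h_c)+H)=0$ onto $h_c^\perp$ and onto $\mathbb R(0,0,h_c)$ respectively. On $h_c^\perp$ the linearization at the base point is invertible, so by the implicit function theorem there is a smooth solution $H=H(t,\lambda)$ with $H(0,\lambda_c)=0$ and, by uniqueness and the quadratic nature of the nonlinearity, $H = O(t^2)$ — this justifies writing $H_t = H/t^2$ after reparametrizing $\lambda$. The remaining scalar equation is $g(t,\lambda):=\langle DE_\lambda(\chi_{\lambda_c}+t(0,0,h_c)+H(t,\lambda)),(0,0,h_c)\rangle=0$. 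The crucial symmetry input: the reflection $S_y$ acts on configurations by flipping the sign of $h_3$ (hence of $t$) while fixing $\chi_{\lambda_c}$ and $H_{sp}$, and $E_\lambda$ is $S_y$-invariant; this forces $g(-t,\lambda)=-g(t,\lambda)$, so $g(t,\lambda)=t\,\tilde g(t^2,\lambda)$ with $\tilde g$ smooth. The trivial branch $Q=\chi_\lambda$ corresponds to $t=0$; the nontrivial branch is $\tilde g(t^2,\lambda)=0$.

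To get the pitchfork I would check the standard Crandall–Rabinowitz transversality conditions for $\tilde g$ at $(0,\lambda_c)$. One computes $\partial_t g|_{(0,\lambda_c)} = \langle \mathcal L_{\lambda_c} h_c, h_c\rangle = 0$ as expected, $\partial_t^2 g|_{(0,\lambda_c)}=0$ by oddness, and $\partial_\lambda \partial_t g|_{(0,\lambda_c)} = \frac{d}{d\lambda}\langle \mathcal L_\lambda h_c,h_c\rangle|_{\lambda_c}$, which is governed by $\mu'(\lambda_c)<0$ (from Theorem~\ref{chilambdatheta-8}) up to the normalization $\int h_c^2$, hence nonzero. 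The remaining nondegeneracy is $\partial_t^3 g|_{(0,\lambda_c)}\neq 0$, equivalently $\tilde g_{s}(0,\lambda_c)\neq 0$ where $s=t^2$; this is the coefficient of the cubic term in the reduced equation and amounts to a concrete integral involving $h_c$, the base solution $\chi_{\lambda_c}$, and the second-order corrector $H$. Granting all three, the implicit function theorem applied to $\tilde g(s,\lambda)=0$ near $(0,\lambda_c)$ yields a unique smooth curve $\lambda=\lambda(s)$, $\lambda(0)=\lambda_c$, which pulled back to $t$ via $s=t^2$ gives the even function $\lambda(t)=\lambda(-t)$ of \eqref{bifursol}–\eqref{bifursym}; the sign $\lambda'(s)(0) = -\tilde g_s/\tilde g_\lambda$ fixes the direction of the branch (supercritical, matching Figure~\ref{picbifur}). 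The symmetry relations $h_{1,-t}=h_{1,t}$, $h_{2,-t}=h_{2,t}$, $h_{3,-t}=-h_{3,t}$ follow from the $S_y$-equivariance of the reduction map $H(t,\lambda)$ together with $\lambda(-t)=\lambda(t)$. Finally, that these are \emph{all} solutions in a neighborhood $\mathfrak A\times(\lambda_c-\delta,\lambda_c+\delta)$ is exactly the exhaustiveness built into Lyapunov–Schmidt: every solution has a well-defined $(t,H)$, $H$ is determined by $t,\lambda$, and the scalar equation $t\tilde g(t^2,\lambda)=0$ has only the two stated solution families.

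The main obstacle I expect is verifying $\tilde g_s(0,\lambda_c)\neq 0$, i.e. the non-vanishing of the cubic coefficient of the reduced bifurcation equation. Unlike the transversality condition, which is handed to us by $\mu'(\lambda_c)<0$, this cubic term is not controlled by any quantity already computed; it requires expanding $E_\lambda$ to third and fourth order at $\chi_{\lambda_c}$, identifying the corrector $H$ (solving a linear elliptic system with $h_c^2$-type right-hand side), and showing the resulting integral is nonzero. At the special value $\theta=-8$ one presumably uses the explicit simplifications available there — an explicit or semi-explicit form of $\chi_\lambda$ and of $h_c$ — to reduce this to a sign computation, possibly a definite integral that can be estimated or evaluated. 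A secondary, more bookkeeping-level difficulty is keeping the $G$-equivariance consistent through every step of the reduction so that the oddness of $g$ is rigorously justified rather than merely asserted; the Principle of Symmetric Criticality material in Appendix~\ref{a_sc} should make this clean.
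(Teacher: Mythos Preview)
Your approach is essentially the paper's: verify the Crandall--Rabinowitz hypotheses (one-dimensional kernel of $D^2E_{\lambda_c}(\chi_{\lambda_c})$ spanned by $(0,0,h_c)$, coming from the simplicity of $\mu(\lambda_c)=0$ together with $\nu(\lambda_c)>0$; Fredholm index zero; and transversality, which you correctly derive from $\mu'(\lambda_c)<0$), then read off the symmetry relations \eqref{bifursym} from the $q_3\mapsto -q_3$ invariance. All of this is right and matches the paper closely.

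Where you go astray is in requiring the cubic coefficient $\tilde g_s(0,\lambda_c)\neq 0$ and calling its verification ``the main obstacle''. This condition is \emph{not} needed for the theorem as stated, and the paper does not check it. The statement asserts only the existence of a smooth even branch $\lambda(t)$, not that the bifurcation is supercritical (i.e., not that $\lambda(t)>\lambda_c$ for $t\neq 0$). In your own reduction, to solve $\tilde g(s,\lambda)=0$ for $\lambda=\lambda(s)$ via the implicit function theorem you need $\tilde g_\lambda(0,\lambda_c)\neq 0$, which is exactly the transversality condition already supplied by $\mu'(\lambda_c)\neq 0$; the value of $\tilde g_s(0,\lambda_c)$ plays no role in solvability and only fixes the sign of $\lambda'(0)$ in the variable $s$. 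Equivalently, Crandall--Rabinowitz \cite[Theorem~1.7]{crandallrabinowitz71} requires only the simple-kernel and transversality hypotheses, nothing cubic. So the obstacle you anticipate does not exist: drop that hypothesis and your argument is complete.
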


The rest of the section will be devoted to the proofs of Theorems~\ref{chilambdatheta-8} and \ref{bifurtheta-8}, which we decompose into several intermediate results. 

\subsection{The proof of Theorem~\ref{chilambdatheta-8}}

We start by proving that the eigenvalue exchange solution branch $\chi_\lambda$ obtained in Proposition~\ref{chilambda} has constant $q_1$, and can be extended to all $\lambda>0$. In particular we obtain the first part of Theorem~\ref{chilambdatheta-8}.

\begin{prop}\label{q1const}
Assume $\theta=-8$. Then $\lambda_*=+\infty$, and for every $\lambda\in (0,+\infty)$, $\chi_\lambda = (2/3,q_{2,\lambda})$, where $q_2=q_{2,\lambda}$ solves
\begin{equation}\label{bvpq2}
\left\lbrace
\begin{gathered}
\frac{1}{\lambda ^2}q_2'' = \left( q_2^2 - 4 \right) q_2, \\
q_2(-1)=2,\;q_2(1)=-2.
\end{gathered}
\right.
\end{equation}
\end{prop}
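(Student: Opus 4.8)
The strategy is to observe that the constant function $q_1 \equiv 2/3$ is an exact solution of the first equation in \eqref{EL_ee} when $\theta = -8$, provided $q_2$ solves a decoupled equation; then to show that this gives \emph{the} branch $\chi_\lambda$ from Proposition~\ref{chilambda}, and that it persists for all $\lambda$. First I would substitute $\theta = -8$ and $q_1 \equiv c$ (constant) into the first line of \eqref{EL_ee}: the left-hand side vanishes, so we need $0 = -\frac{8}{3}c - \frac{2}{3}(q_2^2 - 3c^2) + (3c^2 + q_2^2)c$ to hold identically in $x$. Collecting the $q_2^2$ terms gives the coefficient $-\frac{2}{3} + c$, which vanishes precisely when $c = 2/3$; and one checks that with $c = 2/3$ the remaining constant terms $-\frac{8}{3}\cdot\frac{2}{3} - \frac{2}{3}\cdot(-3\cdot\frac49) + 3\cdot\frac49\cdot\frac23$ indeed cancel. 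Moreover $q_1 \equiv 2/3$ is compatible with the boundary condition $q_1(\pm 1) = q_+$ exactly when $q_+ = 2/3$, which holds for $\theta = -8$ since $6q_+ = 1 + \sqrt{1-\theta} = 1 + 3 = 4$. With $q_1 \equiv 2/3$, the second equation in \eqref{EL_ee} becomes $\frac{1}{\lambda^2}q_2'' = \frac{\theta}{3}q_2 - 4\cdot\frac23 q_2 + (3\cdot\frac49 + q_2^2)q_2 = (-\frac83 - \frac83 + \frac43 + q_2^2)q_2 = (q_2^2 - 4)q_2$, which is exactly \eqref{bvpq2}, with the matching boundary data $q_2(\pm 1) = \mp 3q_+ = \mp 2$.

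Next I would establish existence and uniqueness for the scalar boundary value problem \eqref{bvpq2} for every $\lambda > 0$. Existence follows from the direct method: minimize $\int_{-1}^1 \big(\frac{1}{\lambda^2}(q_2')^2 + \frac12(q_2^2-4)^2\big)dx$ (up to a constant, the restriction of $E_\lambda^{ee}$ to the affine slice $\{q_1 \equiv 2/3\}$) over $H^1$ maps with $q_2(\pm1) = \mp 2$; coercivity and weak lower semicontinuity give a minimizer, which solves \eqref{bvpq2}. For uniqueness one can argue by a maximum-principle / sliding argument, or more simply note that $q_2^2 - 4 \le 0$ on the range dictated by the maximum principle \eqref{maxpp_Q} (which forces $|q_2| \le 2$ here, since $6q_+ = 4$ gives $|q_2| \le 2\sqrt6 q_+ \cdot(\text{appropriate factor})$ — I would pin down the exact constant from \eqref{q_123}), so the nonlinearity is monotone in the right direction: if $q_2, \tilde q_2$ are two solutions, testing the difference equation against $(q_2 - \tilde q_2)$ and using that $s \mapsto (s^2-4)s$ is... actually not monotone, so instead I would use that on the invariant region $|q_2| \le 2$ the term $(q_2^2-4)q_2$ has a sign opposite to $q_2$, run a convexity argument on the functional restricted to $\{|q_2|\le 2\}$, or invoke the general uniqueness Theorem~\ref{unique} (here the decoupled scalar problem is a special case).

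Then I would connect this explicit family to the branch $\chi_\lambda$ of Proposition~\ref{chilambda}. For small $\lambda \in (0,\lambda_0)$, Proposition~\ref{lambda0} says $E_\lambda$ has a \emph{unique} critical point, which lies in $\mathcal H^{ee}$; since $(2/3, q_{2,\lambda})$ is such a critical point, it must coincide with $\chi_\lambda$. By the uniqueness in the analytic continuation argument of Proposition~\ref{chilambda}, the extended branch is uniquely determined as long as $\nu(\lambda) > 0$, so $\chi_\lambda = (2/3, q_{2,\lambda})$ wherever it is defined. It therefore remains to show $\lambda_* = +\infty$, i.e. that $\nu(\lambda) > 0$ for all $\lambda > 0$, which prevents the branch from terminating. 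This is the step I expect to be the main obstacle, and I would handle it by directly examining the quadratic form $\Phi_\lambda$ in \eqref{Phi} at $(q_1,q_2) = (2/3, q_{2,\lambda})$: substituting $\theta = -8$, $q_1 = 2/3$ simplifies the zeroth-order coefficients considerably — the cross term $8q_2(3q_1 - 2)h_1h_2$ vanishes identically since $3q_1 - 2 = 0$! So $\Phi_\lambda$ diagonalizes into an $h_1$-part and an $h_2$-part. The $h_1$-part is $\int \big(\frac{6}{\lambda^2}(h_1')^2 + 6(\frac{\theta}{3} + 2q_1 + 9q_1^2 + q_2^2)h_1^2\big)$; with $\theta = -8$, $q_1 = 2/3$ the coefficient is $6(-\frac83 + \frac43 + 4 + q_2^2) = 6(\frac83 + q_2^2) > 0$, so this part is strictly positive. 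The $h_2$-part is $\int \big(\frac{2}{\lambda^2}(h_2')^2 + 2(\frac{\theta}{3} - 4q_1 + 3q_1^2 + 3q_2^2)h_2^2\big) = \int\big(\frac{2}{\lambda^2}(h_2')^2 + 2(-\frac83 - \frac83 + \frac43 + 3q_2^2)h_2^2\big) = \int\big(\frac{2}{\lambda^2}(h_2')^2 + 2(3q_2^2 - 4)h_2^2\big)$; this is $D^2$ of the scalar functional whose minimizer is $q_{2,\lambda}$, hence positive by minimality, and in fact strictly positive (non-degeneracy) because $q_{2,\lambda}$ is a strict local, indeed global, minimizer of the strictly convex-on-$\{|q_2|\le 2\}$ scalar problem — alternatively, $h_2 \equiv 0$ is not forced, and Sturm–Liouville theory gives strict positivity of the first eigenvalue once we know $q_{2,\lambda}$ minimizes. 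Since both pieces are bounded below by a positive multiple of $\int(h_1^2 + h_2^2)$ (using Poincaré for the small-coefficient regime and the minimality for the $h_2$-piece), $\nu(\lambda) > 0$ for every $\lambda$, so $\lambda_* = +\infty$. I would need to be slightly careful to get a \emph{uniform-in-$x$} lower bound on the $h_2$-coefficient's contribution rather than just non-negativity; here the cleanest route is to note that $3q_2^2 - 4 \ge -4$ is bounded below, combine with Poincaré $\int (h_2')^2 \ge \frac{\pi^2}{4}\int h_2^2$, and observe that for the $h_2$-part to be non-positive one would need $\lambda$ large, but then one appeals directly to the fact that $q_{2,\lambda}$ is the \emph{global} minimizer of a functional whose second variation is exactly this form — global minimality plus uniqueness (Theorem~\ref{unique} applied to the scalar slice, or a direct convexity/ODE-shooting argument showing $q_{2,\lambda}$ is monotone and the linearization has no zero eigenvalue) upgrades non-negativity of $\Psi$-type form to strict positivity of $\nu$.
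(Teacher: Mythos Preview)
Your overall strategy and the algebraic reductions match the paper exactly, including the key observation that the cross term $8q_2(3q_1-2)h_1h_2$ in $\Phi_\lambda$ vanishes when $q_1\equiv 2/3$, so that $\Phi_\lambda$ splits into an obviously positive $h_1$-part and an $h_2$-part equal to $D^2I_\lambda(q_{2,\lambda})$ for the scalar functional $I_\lambda(q_2)=\int\big(\tfrac{1}{\lambda^2}(q_2')^2+\tfrac12(q_2^2-4)^2\big)$.

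The genuine gap is in the step you yourself flag as the main obstacle: upgrading $\eta(\lambda):=\inf D^2I_\lambda(q_{2,\lambda})\ge 0$ (which is all that minimality gives) to $\eta(\lambda)>0$ for \emph{every} $\lambda>0$. None of your proposed routes closes it. The functional $I_\lambda$ is \emph{not} convex on $\{|q_2|\le 2\}$: the second derivative of $\tfrac12(s^2-4)^2$ is $6s^2-8$, negative near $s=0$. Theorem~\ref{unique} only yields uniqueness and non-degeneracy for $\lambda<\lambda_0$, not for all $\lambda$. And uniqueness of a global minimizer does not by itself imply that its Hessian has trivial kernel. The paper's device, which you are missing, is a monotonicity-in-$\lambda$ argument after rescaling: set $\bar q_{2,\lambda}(y)=q_{2,\lambda}(y/\lambda)$, so that $\eta(\lambda)$ becomes an infimum over $H_0^1(-\lambda,\lambda)$ with potential $3\bar q_{2,\lambda}^2-4$. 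One then proves (by an exchange-of-minimizers/analyticity trick) that for each fixed $y>0$ the map $\lambda\mapsto\bar q_{2,\lambda}(y)$ is increasing; since $\bar q_{2,\lambda}$ is odd, $\bar q_{2,\lambda}^2$ is strictly decreasing in $\lambda$ on $(-\lambda,\lambda)\setminus\{0\}$, and the domain of admissible $h$ enlarges, so $\eta$ is strictly decreasing. Combined with $\eta\ge 0$, this forces $\eta(\lambda)>0$ for every $\lambda$. Your sketch for uniqueness of \eqref{bvpq2} at arbitrary $\lambda$ is likewise incomplete (Theorem~\ref{unique} again only covers small $\lambda$, and the cubic $s\mapsto(s^2-4)s$ is not monotone on $[-2,2]$); the paper handles it via the first integral $\tfrac{1}{2\lambda^2}(q_2')^2-\tfrac14(q_2^2-4)^2=\mathrm{const}$ and a first-crossing comparison of two putative solutions.
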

\begin{proof}
When the value of the reduced temperature $\theta$ is set to $\theta=-8$, then $q_+=2/3$. Let us define $\tilde{q}_1=q_1-2/3$. For $\tilde{q}_1$, the boundary conditions become $\tilde q_1(\pm 1)=0$. The boundary conditions for $q_2$ are $q_2(\pm 1)=\mp 2$.

In terms of $\tilde{q}_1$, the bulk energy density -- for the eigenvalue exchange solution (that is, with $q_3=0$) -- reads
\begin{equation}\label{f_theta-8}
f(q_1,q_2)= 16\tilde{q}_1^2\left(\frac{3}{2}+\tilde{q}_1\right) 
+ \frac{1}{2}\left(q_2^2-4+3\tilde{q}_1^2\right)^2,
\end{equation}
and the Euler-Lagrange equations become
\begin{equation}\label{ELee_theta-8}
\begin{aligned}
\frac{1}{\lambda^2}\tilde{q}_1'' & = \left(4 +8\tilde{q}_1 + 3\tilde{q}_1^2 + q_2^2 \right)\tilde{q}_1,\\
\frac{1}{\lambda^2}q_2'' & = \left( q_2^2 -4 +3\tilde{q}_1^2 \right) q_2
\end{aligned}
\end{equation}
Therefore, there exists a solution with $\tilde{q}_1\equiv 0$, i.e. $q_1\equiv 2/3$. Indeed, a constant $\tilde q_1$ solves the first equation (for any $q_2$), and the corresponding $q_2$ is obtained by minimizing the energy $E_\lambda^{ee}$ in which $q_1$ is taken to be constant. That is, $q_2$ minimizes
\begin{equation}\label{Ilambda}
I_\lambda(q_2)=\int_{-1}^1 \left( \frac{1}{\lambda^2}(q_2')^2 + \frac{1}{2}(q_2^2-4)^2\right)dx.
\end{equation}

Hence $q_2$ solves \eqref{bvpq2}. From Lemma~\ref{pq2} below we know that \eqref{bvpq2} actually admits a unique solution. Hence we may define for all $\lambda>0$, without ambiguity, the eigenvalue exchange solution
\begin{equation*}
\tilde \chi_\lambda := (2/3,q_{2,\lambda}),\quad\text{where }q_{2,\lambda}\text{ solves \eqref{bvpq2}.}
\end{equation*}

The uniqueness proven in Proposition~\ref{lambda0} ensures that $\chi_\lambda = \tilde \chi_\lambda$ for $\lambda\in (0,\lambda_0)$. On the other hand, Lemma~\ref{nutheta-8} below ensures that $\tilde\chi_\lambda$ is a smooth extension of $\chi_\lambda$ satisfying $\nu(\lambda)>0$ for all $\lambda>0$. Therefore we conclude, by the uniqueness in Proposition~\ref{chilambda}, that $\lambda_*=+\infty$ and $\chi_\lambda=(2/3,q_{2,\lambda})$.
\end{proof}

%
%

In the proof of Proposition~\ref{q1const}, we made use of two lemmas, Lemma~\ref{pq2} and Lemma~\ref{nutheta-8}, that we are going to prove next. The first one gives properties of the boundary value problem \eqref{bvpq2} satisfied by $q_{2,\lambda}$.

\begin{lem}\label{pq2}
The boundary value problem
\eqref{bvpq2}
has a unique solution, which is odd and decreasing.
\end{lem}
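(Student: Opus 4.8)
The plan is to study the boundary value problem \eqref{bvpq2} by a combination of ODE phase-plane analysis and a shooting/monotonicity argument. First I would observe that the equation $\frac{1}{\lambda^2}q_2'' = (q_2^2-4)q_2$ is autonomous, so it admits a first integral: multiplying by $q_2'$ and integrating gives $\frac{1}{2\lambda^2}(q_2')^2 - \frac{1}{8}(q_2^2-4)^2 = \mathrm{const}$, i.e. trajectories live on level sets of the Hamiltonian $H(q_2,q_2') = \frac{1}{\lambda^2}(q_2')^2 - \frac{1}{4}(q_2^2-4)^2$. The potential $W(q_2) = -\frac14(q_2^2-4)^2$ has a double-well shape with maxima (value $0$) at $q_2=\pm 2$ and a minimum at $q_2=0$; the boundary data $q_2(\pm1)=\mp 2$ sit exactly at the potential maxima. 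This strongly constrains the possible solution curve to the bounded region $-2\le q_2\le 2$.

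For existence, the cleanest route is variational: the functional $I_\lambda$ of \eqref{Ilambda} is coercive and weakly lower semicontinuous on the affine space $\{q_2\in H^1(-1,1): q_2(\pm1)=\mp2\}$, so a minimizer exists and solves \eqref{bvpq2}. By the maximum principle \eqref{maxpp_Q} (or directly, since replacing $q_2$ by $\max(\min(q_2,2),-2)$ strictly decreases $I_\lambda$ unless $|q_2|\le 2$ already), any solution satisfies $|q_2|\le 2$. To get oddness and monotonicity of \emph{the} solution, I would argue: if $q_2$ is a solution then $x\mapsto -q_2(-x)$ is also a solution with the same boundary data, so by uniqueness (to be proved) $q_2$ is odd; in particular $q_2(0)=0$. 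For monotonicity, note that on $(-2,2)$ the potential $W$ is such that $q_2''$ has the opposite sign to $q_2$ (since $q_2^2-4<0$), so $q_2$ is concave where it is positive and convex where it is negative; combined with $q_2(-1)=2>0$, $q_2(0)=0$, $q_2(1)=-2<0$ this forces $q_2>0$ on $(-1,0)$, $q_2<0$ on $(0,1)$, and then a standard phase-plane argument (the trajectory in the $(q_2,q_2')$ plane stays on one side and cannot turn around without leaving the admissible energy level) gives $q_2'<0$ throughout, i.e. $q_2$ is strictly decreasing.

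For uniqueness — which I expect to be the main obstacle — I would use the energy level / shooting picture. Parametrize solutions by $q_2'(0)=:p<0$ (since $q_2$ is decreasing and odd, $q_2(0)=0$). The first integral fixes the whole trajectory: $\frac{1}{\lambda^2}(q_2')^2 = p^2/\lambda^2 - \frac14(q_2^2-4)^2 + \frac14(0-4)^2$... more precisely the energy constant is determined by $(q_2,q_2')=(0,p)$, and then $q_2(x)$ is obtained by quadrature, with $x$ as a function of $q_2$ given by a monotone integral. The boundary condition $q_2(1)=-2$ — equivalently reaching $q_2=-2$ exactly at $x=1$ — becomes a single scalar equation $T(p)=1$, where $T(p)$ is the "time" to travel from $q_2=0$ to $q_2=-2$ along the trajectory with initial speed $p$. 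One then shows $T$ is strictly monotone in $p$ (the time integral $T(p)=\int_{-2}^0 \frac{dq}{\sqrt{p^2 - \frac{\lambda^2}{4}((q^2-4)^2-16)}}$, up to constants, is manifestly strictly monotone in $p$ since the integrand is), and that $T$ maps $(-\infty,0)$ onto an interval containing $1$; strict monotonicity gives at most one $p$, hence uniqueness. One subtlety to handle: a priori a solution need not be monotone or odd, so strictly speaking the uniqueness argument should be run on general solutions first — but since $|q_2|\le 2$ pins every solution to the same bounded potential well, and the energy integral shows a non-monotone solution would have to cross $q_2'=0$ at an interior point, which forces (by the first integral) $(q_2^2-4)^2 = (0-4)^2 + $ (stuff), i.e. $q_2=0$, contradicting $q_2'=q_2=0 \Rightarrow q_2\equiv 0$, no such turning point exists. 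Thus every solution is strictly monotone, hence by the reflection argument odd, hence covered by the shooting uniqueness. I would present the Hamiltonian/quadrature computation only to the extent needed to exhibit the strict monotonicity of $T(p)$, treating the rest as routine.
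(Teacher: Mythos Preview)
Your overall strategy is sound, but there is a circularity in the uniqueness/oddness step. Your uniqueness argument shoots from the center, parametrizing by $p=q_2'(0)$ with $q_2(0)=0$; this already presupposes oddness. You propose to recover oddness for an arbitrary solution via the reflection $q_2(x)\mapsto -q_2(-x)$ together with uniqueness --- but that is the uniqueness you have not yet established. The attempted repair, ``every solution is monotone, hence by the reflection argument odd'', does not close the loop: monotonicity alone does not force oddness, and the reflection argument still needs uniqueness. (Your turning-point computation is also garbled: if $q_2'(x_1)=0$ at an interior point, comparing the first integral there and at $x=-1$, where $(q_2^2-4)^2=0$, gives $\tfrac{1}{2\lambda^2}q_2'(-1)^2 = -\tfrac14(q_2(x_1)^2-4)^2$, which forces both sides to vanish and hence $q_2\equiv 2$ by Cauchy uniqueness --- not $q_2(x_1)=0$ as you wrote. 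The conclusion, no interior turning point, is nonetheless correct.)

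The fix is easy: shoot from the boundary rather than the center. Parametrize by $\alpha=q_2'(-1)<0$; the first integral gives $(q_2')^2=\alpha^2+\tfrac{\lambda^2}{2}(q_2^2-4)^2$, and the travel time from $q_2=2$ to $q_2=-2$ is
\[
T(\alpha)=\int_{-2}^{2}\frac{dq}{\sqrt{\alpha^2+\tfrac{\lambda^2}{2}(q^2-4)^2}},
\]
which is strictly decreasing in $|\alpha|$ with range $(0,\infty)$. This yields uniqueness directly, after which oddness follows from your reflection argument. The paper takes a slightly different route for both steps: it proves uniqueness by a direct comparison (if two distinct solutions have $q_2'(-1)$ ordered, they must cross, and evaluating the first integral between $-1$ and the first crossing point contradicts the ordering of the derivatives there), and it obtains oddness \emph{without} invoking uniqueness, by noting that the first integral forces $q_2'(-1)^2=q_2'(1)^2$ (both endpoints give $(q_2^2-4)^2=0$), so by sign $q_2'(-1)=q_2'(1)$, whence $q_2(x)$ and $-q_2(-x)$ share the same Cauchy data at $x=1$.
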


\begin{proof}
Very similar results are classical in the study of reaction-diffusion equations (see for instance \cite[Section 4.3.]{fife79}). Since the present case is particularly simple, we nevertheless give a complete proof here. Recall that the existence of a solution follows directly from minimizing the energy $I_\lambda$ defined in \eqref{Ilambda}.

We start by proving the bounds
\begin{equation}\label{boundq2}
-2\leq q_2\leq 2.
\end{equation}
Assume that $q_2^2$ attains its maximum in $(-1,1)$. Then, at a point where the maximum is attained, it holds
\begin{equation*}
0\geq \frac{1}{2\lambda^2}(q_2^2)'' \geq \frac{1}{\lambda^2}q_2''q_2 = (q_2^2-4)q_2^2,
\end{equation*}
so that $q_2^2 \leq 4$. Since this bound is satisfied (with equality) on the boundary, \eqref{boundq2} is proved.

Multiplying  \eqref{bvpq2} by $q_2'$, we obtain the first integral
\begin{equation}\label{firstintq2}
\left[ \frac{1}{2\lambda^2}(q_2')^2\right]' = \left[ \frac{1}{4}(q_2^2-4)^2\right]'.
\end{equation}
Integrating \eqref{firstintq2}, we obtain
\begin{equation}
\frac{1}{2\lambda^2}(q_2')^2 = \frac{1}{4}(q_2^2-4)^2+q_2'(-1)^2.
\end{equation}
Since $q_2'(-1)\neq 0$ (otherwise $q_2$ would satisfy the same Cauchy problem at $-1$ as the constant solution), it follows in particular that $q_2'$ does not vanish. On the other hand, the bounds \eqref{boundq2} ensure that $q_2'(-1)$ is negative. Therefore $q_2'$ must stay negative:
\begin{equation}\label{q2dec}
q_2'<0,
\end{equation}
hence every solution of \eqref{bvpq2} is decreasing.

Now we prove that \eqref{bvpq2} has a unique solution. Assume $\underline q_2$ and $\overline q_2$ are distinct solution. Then they must have distinct derivatives at $-1$ (otherwise they would satisfy the same Cauchy problem). Say 
\begin{equation}\label{pq2_a}
\underline q_2'(-1)<\overline q_2'(-1)<0.
\end{equation}
Since $\underline q_2$ and $\overline q_2$ take the same value at $1$, we may consider
\begin{equation*}
x_0 = \min \left\lbrace x>-1;\: \underline q_2(x)=\overline q_2(x) \right\rbrace \in (-1,1].
\end{equation*}
At this point $x_0$, $\underline q_2$ and $\overline q_2$ must have distinct derivatives, and since $\underline q_2 < \overline q_2$ in $(-1,x_0)$, it holds
\begin{equation}\label{pq2_b}
\overline q_2'(x_0)<\underline q_2'(x_0)<0
\end{equation}
From \eqref{pq2_a} and \eqref{pq2_b} we deduce that
\begin{equation*}
\overline q_2'(-1)^2-\overline q_2'(x_0)^2 < \underline q_2'(-1)^2-\underline q_2'(x_0)^2,
\end{equation*}
which is obviously incompatible with the facts that $\underline q_2$ and $\overline q_2$ satisfy \eqref{firstintq2} and coincide at $-1$ and $x_0$. Therefore \eqref{bvpq2} has a unique solution.

Eventually we prove that $q_2$ satisfying \eqref{bvpq2} must be odd. Indeed, integrating \eqref{firstintq2} between $-1$ and $1$, we obtain
\begin{equation*}
q_2'(-1)^2=q_2'(1)^2,
\end{equation*}
which implies, since $q_2'<0$, $q_2'(-1)=q_2'(1)$. Therefore, the functions $q_2(x)$ and $-q_2(-x)$ satisfy the same Cauchy problem at $\pm 1$, so they must be equal.
\end{proof}

Now we turn to the proof of the second lemma used in the proof of Proposition~\ref{q1const}, in which we show that the eigenvalue exchange solution with constant $q_1$ is non degenerately stable in $\mathcal H^{ee}$.

\begin{lem}\label{nutheta-8}
Assume $\theta=-8$. Let $q_{2,\lambda}$ be the unique solution of \eqref{bvpq2}, and $\chi_\lambda:=(0,q_{2,\lambda})\in\mathcal H^{ee}$. Then $ \nu (\lambda)$, defined as in \eqref{numu}, satisfies
\begin{equation}\label{nu>0}
\nu(\lambda)>0 \quad\forall\lambda>0.
\end{equation}
As a consequence, $\lambda\mapsto q_{2,\lambda}$ is smooth.
\end{lem}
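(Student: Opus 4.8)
The plan is to exploit the special structure at $\theta=-8$, where $q_1\equiv q_+=2/3$. Substituting $\theta=-8$ and $q_1=2/3$ into \eqref{Phi}, the coefficient $8q_2(3q_1-2)$ of the cross term $h_1h_2$ vanishes identically (since $3\cdot\tfrac{2}{3}-2=0$), and $\Phi_\lambda$ decouples as $\Phi_\lambda[h_1,h_2]=\Phi_1[h_1]+\Phi_2[h_2]$ with
\begin{equation*}
\begin{aligned}
\Phi_1[h_1]&=\int_{-1}^1\Big(\tfrac{6}{\lambda^2}(h_1')^2+(16+6q_{2,\lambda}^2)h_1^2\Big)\,dx,\\
\Phi_2[h_2]&=2\int_{-1}^1\Big(\tfrac{1}{\lambda^2}(h_2')^2+(3q_{2,\lambda}^2-4)h_2^2\Big)\,dx.
\end{aligned}
\end{equation*}
Since the polarized form has no $h_1k_2$ or $h_2k_1$ terms either, the operator $\mathcal M_\lambda$ is block-diagonal and $\nu(\lambda)=\min(\nu_1(\lambda),\nu_2(\lambda))$, where $\nu_i$ is the first eigenvalue of the block associated with $\Phi_i$. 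The first block is immediate: $\Phi_1[h_1]\ge 16\int h_1^2$, so $\nu_1(\lambda)\ge 16>0$.

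The crux is to prove $\nu_2(\lambda)>0$, i.e.\ that the Dirichlet Schrödinger operator $-\tfrac{1}{\lambda^2}\partial_x^2+(3q_{2,\lambda}^2-4)$ on $(-1,1)$ is positive definite — which is not obvious, since the potential $3q_{2,\lambda}^2-4$ is negative near $x=0$. The key observation is that differentiating the ODE in \eqref{bvpq2} shows that $w:=q_{2,\lambda}'$ solves the linearized equation $\tfrac{1}{\lambda^2}w''=(3q_{2,\lambda}^2-4)w$; equivalently, $\phi:=-q_{2,\lambda}'$ lies in the kernel of the operator associated with $\Phi_2$, and by Lemma~\ref{pq2} — together with $q_{2,\lambda}'(\pm1)\neq0$, which is established in its proof — $\phi$ is continuous and strictly positive on the closed interval $[-1,1]$. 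I would then use the ground-state substitution: for $h\in H^1_0(-1,1)$, write $h=\phi\psi$ with $\psi=h/\phi\in H^1_0(-1,1)$ (legitimate because $\phi$ is bounded below on $[-1,1]$ and $\psi(\pm1)=0$), and integrate by parts the term $\int\phi\phi'(\psi^2)'$ to obtain the Picone-type identity
\begin{equation*}
\int_{-1}^1\Big(\tfrac{1}{\lambda^2}(h')^2+(3q_{2,\lambda}^2-4)h^2\Big)\,dx=\frac{1}{\lambda^2}\int_{-1}^1\phi^2(\psi')^2\,dx\ \ge\ 0,
\end{equation*}
with equality only if $\psi$ is constant, i.e.\ $h\equiv0$. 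Thus $\Phi_2$ is positive definite on $H^1_0(-1,1)$; since its associated operator has compact resolvent, the infimum $\nu_2(\lambda)$ in \eqref{numu} is attained on a nonzero eigenfunction and is therefore $>0$. Combined with $\nu_1(\lambda)\ge16$, this gives \eqref{nu>0}.

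Finally, \eqref{nu>0} says that $\chi_\lambda=(2/3,q_{2,\lambda})$ is a non-degenerate critical point of $E_\lambda^{ee}$ for every $\lambda>0$, so the implicit function theorem applied to $(\lambda,\chi)\mapsto DE_\lambda^{ee}(\chi)$, exactly as in the proof of Proposition~\ref{chilambda}, yields that $\lambda\mapsto\chi_\lambda$, and in particular $\lambda\mapsto q_{2,\lambda}$, is smooth. I expect the positivity of $\nu_2$ to be the only genuine obstacle; the one delicate point there is justifying the ground-state substitution, i.e.\ that $\phi=-q_{2,\lambda}'$ stays away from $0$ up to the endpoints so that $\psi=h/\phi$ really lies in $H^1_0$ and the boundary terms in the integration by parts drop — which is precisely why the non-vanishing of $q_{2,\lambda}'$ at $\pm1$ from the proof of Lemma~\ref{pq2} is invoked.
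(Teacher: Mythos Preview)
Your argument is correct. The decoupling of $\Phi_\lambda$ and the treatment of $\Phi_1$ match the paper; where you diverge is in showing $\nu_2(\lambda)>0$ (the paper's $\eta(\lambda)>0$). The paper argues indirectly: since $q_{2,\lambda}$ minimizes $I_\lambda$, one has $\eta(\lambda)\ge 0$; then, after rescaling to $(-\lambda,\lambda)$, the monotonicity of $\lambda\mapsto\bar q_{2,\lambda}(y)$ (proved separately as Lemma~\ref{barq2increas}) shows $\eta$ is strictly decreasing in $\lambda$, which together with $\eta\ge 0$ forces $\eta(\lambda)>0$ for every $\lambda$. You instead use the classical ground-state/Picone identity with $\phi=-q_{2,\lambda}'$, which is a strictly positive solution of the linearized equation on $[-1,1]$, yielding $\Phi_2[h]=\tfrac{2}{\lambda^2}\int\phi^2(\psi')^2>0$ for $h\neq 0$ directly. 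Your route is more self-contained for this lemma (it does not rely on the auxiliary monotonicity lemma), while the paper's route has the side benefit of establishing that $\eta(\lambda)$ is strictly decreasing --- though that extra information is not actually used later, and Lemma~\ref{barq2increas} is in any case needed independently in the proof of Proposition~\ref{mu'<0}. Your justification that $\phi$ is bounded away from zero on the closed interval (using $q_{2,\lambda}'(\pm 1)\neq 0$ from the proof of Lemma~\ref{pq2}) is exactly the point that makes the substitution $\psi=h/\phi$ and the integration by parts legitimate.
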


\begin{proof}
First note that the smoothness of $\lambda\mapsto q_{2,\lambda}$ follows from \eqref{nu>0}. Indeed, \eqref{nu>0} implies that $D^2 I_\lambda (q_{2,\lambda})$ is invertible, so that near $q_{2,\lambda}$, a solution $q_2$ of $DI_\lambda(q_2)=0$ depending smoothly on $\lambda$ may be obtained by the implicit function theorem. On the other hand, the uniqueness proven in Lemma~\ref{pq2} implies that $q_{2,\lambda}$ coincide with this smooth solution.

Now we turn to the proof of \eqref{nu>0}. Recall that $\nu(\lambda)$ is the first eigenvalue of the quadratic form $\Phi_\lambda = D^2E_\lambda^{ee}(\chi_\lambda)$. Since $\theta=-8$ and $q_1\equiv 2/3$, it holds
\begin{equation*}
\begin{split}
\Phi_\lambda[h_1,h_2] & =\int_{-1}^1 \left\lbrace \frac{6}{\lambda^2} (h_1')^2 + 6\left(\frac{8}{3}+q_{2,\lambda}^2\right)h_1^2 \right\rbrace dx \\
& \quad + \int_{-1}^1 \left\lbrace \frac{2}{\lambda^2} (h_2')^2 + 2\left(3 q_{2,\lambda}^2-4\right)h_2^2 \right\rbrace dx.
\end{split}
\end{equation*}
That is, $\Phi_\lambda$ decomposes into a quadratic form in $h_1$, which is obviously definite positive, and a quadratic form in $h_2$, which is nothing else than $D^2 I_\lambda (q_{2,\lambda})$. Therefore, to prove \eqref{nu>0} we only need to show that $D^2 I_\lambda(q_{2,\lambda})$ is definite positive.

Let us define
\begin{equation}\label{eta}
\begin{split}
\eta(\lambda) & :=\inf_{h\in H_0^1(-1,1),\:\int h^2 = 1} D^2 I_\lambda(q_{2,\lambda})[h] \\
& =\inf_{h\in H_0^1(-1,1)\:\int h^2=1} \int \left\lbrace \frac{2}{\lambda^2} (h')^2 + 2\left(3 q_{2,\lambda}^2-4\right)h^2 \right\rbrace dx.
\end{split}
\end{equation}
We need to prove that $\eta(\lambda)>0$ for every $\lambda>0$. Since $q_{2,\lambda}$ minimizes $I_\lambda$, it clearly holds $\eta(\lambda)\geq 0$. To prove that $\eta(\lambda)$ can not vanish, we are going to establish that $\lambda\mapsto\eta(\lambda)$ is decreasing.

To this end, we remark that after a rescaling it holds
\begin{equation*}
\eta(\lambda)= \inf_{h\in H_0^1(-\lambda,\lambda),\:\int h^2=1} \int_{-\lambda}^\lambda \left\lbrace 2 (h')^2 + 2\left(3 \bar q_{2,\lambda}^2-4\right)h^2 \right\rbrace dy,
\end{equation*}
where $\bar q_{2,\lambda}$ is the rescaled map defined by
\begin{equation}\label{barq2}
\bar q_{2,\lambda}(y)=q_{2,\lambda}(y/\lambda),
\end{equation}
and extended to the whole real line by putting $\bar q_{2,\lambda}=2$ in $(-\infty,-\lambda)$ and $\bar q_{2,\lambda}=-2$ in $(\lambda,+\infty)$. In Lemma~\ref{barq2increas} below we show that $\bar q_{2,\lambda}(y)$ is a monotone function of $\lambda$.

Using Lemma~\ref{barq2increas}, we prove that $\eta(\lambda)$ is decreasing: let $\lambda'>\lambda$ and consider a map $h_\lambda\in H_0^1(-\lambda,\lambda)$ at which the infimum defining $\eta(\lambda)$ is attained. Then $h_\lambda$ is admissible in the infimum defining $\eta(\lambda')$, and we obtain $\eta(\lambda')<\eta(\lambda)$, since $\bar q^2_{2,\lambda'}\leq \bar q^2_{2,\lambda}$, with strict inequality on $(-\lambda,0)\cup(0,\lambda)$. The latter fact follows from Lemma~\ref{barq2increas} and the fact that $q_{2,\lambda}$ is odd.

We may now complete the proof of Lemma~\ref{nutheta-8}: since $\eta(\lambda)$ decreases, and in addition $\eta(\lambda)\geq 0$ for all $\lambda$, we must have $\eta(\lambda)>0$ for any $\lambda$.
\end{proof}

In the following lemma, we prove the monotonicity of $\lambda\mapsto\bar q_{2,\lambda}$.

\begin{lem}\label{barq2increas}
For any $y>0$, $(0,y)\ni \lambda \mapsto q_{2,\lambda}(y/\lambda)=\bar q_{2,\lambda}(y)$ is increasing.
\end{lem}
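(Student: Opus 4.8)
The plan is to work with the rescaled equation satisfied by $\bar q_{2,\lambda}$ and view $\lambda$ as a bifurcation-free parameter, differentiating the solution with respect to $\lambda$ and showing that the derivative has a sign. First I would set $y=\lambda x$ and write $\bar q_{2,\lambda}(y)=q_{2,\lambda}(y/\lambda)$, so that on $(-\lambda,\lambda)$ the function $u=\bar q_{2,\lambda}$ solves the $\lambda$-independent equation $u''=(u^2-4)u$, but now with $\lambda$-dependent boundary conditions $u(\pm\lambda)=\mp 2$ (and $u$ extended by the constants $\pm2$ outside). The key observation is that the nonlinearity is autonomous, so the only $\lambda$-dependence is through the location of the endpoints. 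Since $q_{2,\lambda}$ is odd (Lemma~\ref{pq2}), so is $\bar q_{2,\lambda}$, hence it suffices to treat $y\in(0,\lambda)$ and show $\partial_\lambda \bar q_{2,\lambda}(y)>0$ there; by oddness this gives $\partial_\lambda \bar q^2_{2,\lambda}<0$ on $(0,\lambda)$, which is exactly what Lemma~\ref{nutheta-8} invokes.

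Next I would differentiate in $\lambda$. Writing $w=w_\lambda:=\partial_\lambda \bar q_{2,\lambda}$ on the interval $(0,\lambda)$, differentiating the ODE gives the linearized equation $w''=(3u^2-4)w$, and differentiating the boundary data $u(\lambda)=-2$, $u(0)=0$ gives $w(0)=0$ and $w(\lambda)=-u'(\lambda)>0$ (here $u'(\lambda)<0$ because $q_{2,\lambda}$ is strictly decreasing, by Lemma~\ref{pq2}, and the chain rule: $\tfrac{d}{d\lambda}[\bar q_{2,\lambda}(\lambda)] = w(\lambda)+u'(\lambda)=0$). So $w$ solves a homogeneous linear Sturm--Liouville problem $w''-(3u^2-4)w=0$ on $(0,\lambda)$ with $w(0)=0$ and $w(\lambda)>0$. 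To conclude $w>0$ on $(0,\lambda)$ it suffices to know that this operator admits no sign change of $w$ forced by an interior zero — i.e. that the first Dirichlet eigenvalue of $-\tfrac{d^2}{dy^2}+(3u^2-4)$ on $(0,\lambda)$ is positive, or more elementarily that $w$ cannot vanish in the open interval. I would argue this via a comparison/maximum-principle-type argument: if $w$ had a first interior zero $y_0\in(0,\lambda)$, then on $(0,y_0)$ we would have $w>0$ with $w(0)=w(y_0)=0$, forcing $w''$ to be somewhere $\le 0$; but this needs the coefficient $3u^2-4$ to be controlled, which is where the bound $|u|\le 2$ from \eqref{boundq2} enters — note $3u^2-4$ can be negative (as large as $-4$), so a naive maximum principle fails and one genuinely needs a finer argument.

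The clean way around this is to compare $w$ directly with $-u'$. Observe that $v:=-u'>0$ on $(0,\lambda)$ (strict monotonicity) and, differentiating $u''=(u^2-4)u$ in $y$, $v$ satisfies the \emph{same} linear equation $v''=(3u^2-4)v$. Thus both $w$ and $v$ solve the same second-order linear ODE on $(0,\lambda)$; $v$ has no zero in the closed interval $[0,\lambda]$ (it is positive up to and including the endpoints, since $u'(\lambda)\ne 0$ by the first-integral argument in Lemma~\ref{pq2} and $u'(0)<0$), so by Sturm separation $w$ can have at most one zero in $[0,\lambda]$; we already know $w(0)=0$, hence $w\ne 0$ on $(0,\lambda]$, and since $w(\lambda)>0$ and $w$ is continuous, $w>0$ throughout $(0,\lambda)$. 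This is the heart of the proof and the step I expect to be the main obstacle — getting the boundary behaviour of $w$ and $v$ exactly right, ensuring $v(0)\ne 0$ and $v(\lambda)\ne 0$ so that Sturm separation applies cleanly, and confirming the endpoint identity $w(\lambda)=-u'(\lambda)$.

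Finally, translating back: for fixed $y>0$ and $\lambda>y$ we have shown $\partial_\lambda\bar q_{2,\lambda}(y)=w_\lambda(y)>0$, and for $\lambda\le y$ the value is the constant $-2$ (resp.\ on the relevant side), so $\lambda\mapsto \bar q_{2,\lambda}(y)$ is non-decreasing overall and strictly increasing once $\lambda>y$, as claimed. I would also remark that the same computation with $u'$ shows $\bar q_{2,\lambda}$ converges monotonically as $\lambda\to\infty$ to the heteroclinic $y\mapsto -2\tanh(\sqrt2\,y)$, though this is not needed for the lemma itself. The only delicate points are the differentiability of $\lambda\mapsto q_{2,\lambda}$ (which, as noted in Lemma~\ref{nutheta-8}, follows a posteriori from $\nu(\lambda)>0$ via the implicit function theorem — so one should either argue smoothness independently here or arrange the logical order so that $C^1$-dependence is available, e.g.\ directly from the implicit function theorem applied to the shooting map $\lambda\mapsto q_{2,\lambda}'(-1)$, whose derivative is nonvanishing by the uniqueness argument in Lemma~\ref{pq2}).
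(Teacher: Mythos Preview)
Your approach is correct but differs from the paper's. The paper gives a purely variational argument: if $\bar q_{2,\lambda}$ and $\bar q_{2,\lambda'}$ (with $\lambda<\lambda'$) were to touch at some $y_0\in(0,\lambda)$, one swaps them across $y_0$ to produce new competitors for $\tilde I_\lambda$ and $\tilde I_{\lambda'}$; the swap preserves the sum of the two energies, so both pieced-together functions are still minimizers, hence analytic, which forces $\bar q_{2,\lambda}\equiv\bar q_{2,\lambda'}$ on $(0,\lambda)$ and then a contradiction from the constant extension on $(\lambda,\lambda')$. No differentiability in $\lambda$ is required. Your route---linearize in $\lambda$ and compare $w=\partial_\lambda\bar q_{2,\lambda}$ with the nowhere-vanishing solution $v=-\bar q_{2,\lambda}'$ via Sturm separation---is essentially the computation the paper carries out \emph{later}, in the proof of Proposition~\ref{mu'<0}, once Lemma~\ref{barq2increas} and $\eta(\lambda)>0$ are already in hand. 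Your version has the virtue that the Sturm comparison with $v$ does double duty: it simultaneously shows that the linearized Dirichlet problem $h''=(3u^2-4)h$, $h(0)=h(\lambda)=0$ has only the trivial solution (else $v$ would vanish between two zeros of $h$), hence $\eta(\lambda)>0$ and smooth $\lambda$-dependence by the implicit function theorem. So your argument could in fact replace both Lemma~\ref{barq2increas} and that half of Lemma~\ref{nutheta-8} in one stroke, and feeds directly into Proposition~\ref{mu'<0}. One caution on the circularity you correctly flag: your proposed fix---deducing nondegeneracy of the shooting map from ``the uniqueness argument in Lemma~\ref{pq2}''---does not work as stated, since uniqueness of the boundary value problem does not by itself give $\partial_\alpha\phi\neq 0$; the clean way to break the loop is precisely the Sturm observation above.
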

\begin{proof}
The rescaled map $\bar q_{2,\lambda}$ minimizes the energy functional
\begin{equation*}
\tilde{I}_\lambda (\bar{q}_2) = \int_0^\lambda \left[(\bar{q}_2')^2+\frac{1}{2}(\bar{q}_2^2-4)^2 \right] dy,
\end{equation*}
subject to the boundary conditions $\bar{q}_2(0)=0$, $\bar q_2(\lambda)=-2$. Note that we were able to restrict the integral to the positive half-line since $q_2$ is odd.

Let $\lambda'>\lambda>0$. Consider the respective minimizers $\bar{q}_{2,\lambda'}$ and $\bar{q}_{2,\lambda}$, and assume that it does not hold
\begin{equation*}
\bar q_{2,\lambda'} (y) > \bar q_{2,\lambda} (y) \quad\forall y\in (0,\lambda').
\end{equation*}
Then $\bar q_{2,\lambda'}(y_0)=\bar q_{2,\lambda}(y_0)$ for some $y_0\in (0,\lambda)$, since in $(\lambda,\lambda')$ it does hold $\bar q_{2,\lambda'} > \bar q_{2,\lambda}$.

Thus, the maps
\begin{equation*}
\tilde{q}_{2,\lambda}=\bar{q}_{2,\lambda'}\mathbf{1}_{y\leq y_0}+\bar{q}_{2,\lambda}\mathbf{1}_{y\geq y_0},\quad 
\tilde{q}_{2,\lambda'}=\bar{q}_{2,\lambda}\mathbf{1}_{y\leq y_0}+\bar{q}_{2,\lambda'}\mathbf{1}_{y\geq y_0}
\end{equation*}
should minimize $\tilde{I}_\lambda$, respectively $\tilde{I}_{\lambda'}$. In particular, these maps are analytical, which is possible only if $q_{2,\lambda'}$ and $q_{2,\lambda}$ coincide. But then the analytical function $q_{2,\lambda'}$ would be constant on $(\lambda,\lambda')$, and we obtain a contradiction.
\end{proof}

So far we have proven the first part of Theorem~\ref{chilambdatheta-8}, about the extension of $\chi_\lambda$ until $\lambda=+\infty$. Now we turn to proving the second part, about the behaviour of $\mu(\lambda)$. We split this second part into Propositions~\ref{mu'<0} and \ref{muinfty<0} below. We start by showing that $\mu(\lambda)$ decreases, with non vanishing derivative.

\begin{prop}\label{mu'<0}
Assume $\theta=-8$. Then it holds
\begin{equation*}
\mu'(\lambda)<0,
\end{equation*}
for all $\lambda>0$.
\end{prop}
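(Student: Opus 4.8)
The plan is to differentiate the eigenvalue $\mu(\lambda)$ along the branch and compute the sign of the derivative via a Hadamard-type (first-order perturbation) formula, exploiting the scaling structure already used in Lemma~\ref{nutheta-8}. Recall $\mu(\lambda)$ is the first eigenvalue of $\mathcal L_\lambda$ in \eqref{Llambda}, which by Sturm--Liouville theory is simple, with a smooth normalized eigenfunction $h_\lambda\in H_0^1(-1,1)$, $\int h_\lambda^2=1$ (smoothness of $\lambda\mapsto\mu(\lambda)$ and $\lambda\mapsto h_\lambda$ was established in Proposition~\ref{chilambda} via \cite{crandallrabinowitz73}). First I would write, using that $h_\lambda$ realizes the infimum in \eqref{numu} and the envelope theorem,
\begin{equation*}
\mu'(\lambda) = \frac{d}{d\lambda}\Psi_\lambda[h_\lambda] = \frac{\partial \Psi_\lambda}{\partial\lambda}[h_\lambda],
\end{equation*}
since the derivative of $h_\lambda$ contributes nothing at a critical point (the $h$-variation of $\Psi_\lambda[h]-\mu\int h^2$ vanishes at $h=h_\lambda$). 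Explicitly, from \eqref{Psi} with $\theta=-8$ and $q_1\equiv 2/3$,
\begin{equation*}
\Psi_\lambda[h]=\int_{-1}^1\left\{\frac{2(h')^2}{\lambda^2}+2\Big(\frac{8}{3}+q_{2,\lambda}^2-\frac{8}{3}\cdot\text{(from }-4q_1)\Big)h^2\right\}dx = \int_{-1}^1\left\{\frac{2(h')^2}{\lambda^2}+2\big(q_{2,\lambda}^2-4\big)h^2\right\}dx,
\end{equation*}
so that
\begin{equation*}
\mu'(\lambda)= \int_{-1}^1\left\{-\frac{4(h_\lambda')^2}{\lambda^3} + 4\,q_{2,\lambda}\,\partial_\lambda q_{2,\lambda}\,h_\lambda^2\right\}dx.
\end{equation*}

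The first term is manifestly negative, so the whole question reduces to controlling the sign of $\int q_{2,\lambda}\,\partial_\lambda q_{2,\lambda}\,h_\lambda^2$. Here I would invoke Lemma~\ref{barq2increas} together with the oddness of $q_{2,\lambda}$ (Lemma~\ref{pq2}): after rescaling, $\bar q_{2,\lambda}(y)=q_{2,\lambda}(y/\lambda)$ is increasing in $\lambda$ for each fixed $y>0$ and decreasing for $y<0$, and since $q_{2,\lambda}<0$ on $(0,1)$ and $>0$ on $(-1,0)$, one gets $q_{2,\lambda}^2$ pointwise \emph{decreasing} in $\lambda$ on $(-1,0)\cup(0,1)$. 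Translating this into a statement about $\partial_\lambda q_{2,\lambda}$ in the original variable: at fixed $x\in(0,1)$, writing $y=\lambda x$, the chain rule gives $\partial_\lambda\big(q_{2,\lambda}(x)\big)=\partial_\lambda\bar q_{2,\lambda}(\lambda x)+x\,\bar q_{2,\lambda}'(\lambda x)$; the cleaner route is to note directly that $q_{2,\lambda}(x)^2=\bar q_{2,\lambda}(\lambda x)^2$ is decreasing in $\lambda$, hence $\partial_\lambda\big(q_{2,\lambda}(x)^2\big)\le 0$, i.e. $q_{2,\lambda}\,\partial_\lambda q_{2,\lambda}\le 0$ for all $x\in(-1,1)$ (with strict inequality on $(-1,0)\cup(0,1)$). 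Therefore the second term in the formula for $\mu'(\lambda)$ is also $\le 0$, and we conclude $\mu'(\lambda)<0$.

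The step I expect to be the main obstacle is making the monotonicity in $\lambda$ of $q_{2,\lambda}(x)^2$ (at \emph{fixed} $x\in(-1,1)$, not at fixed rescaled variable $y$) fully rigorous and correctly signed. Lemma~\ref{barq2increas} is stated in the rescaled variable $y$, and the change of variables $x=y/\lambda$ mixes the $\lambda$-dependence coming from the domain with that coming from the profile; one must be careful that the monotonicity survives this. I would handle it by observing that for $x\in(0,1)$ and $\lambda'>\lambda$, the rescaled minimizer comparison in the proof of Lemma~\ref{barq2increas} gives $\bar q_{2,\lambda'}(z)>\bar q_{2,\lambda}(z)$ for \emph{all} $z\in(0,\lambda)$, in particular at $z=\lambda x<\lambda$; but I actually want the comparison at $z=\lambda' x$ versus $z=\lambda x$, which requires the additional input that $\bar q_{2,\lambda'}$ is itself decreasing in $z$ (true: $q_{2,\lambda}$ is decreasing by Lemma~\ref{pq2}, and this property is scaling-invariant). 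Combining $\bar q_{2,\lambda'}(\lambda' x)<\bar q_{2,\lambda'}(\lambda x)$ (monotonicity of the profile, since $\lambda' x>\lambda x$) with $\bar q_{2,\lambda'}(\lambda x)>\bar q_{2,\lambda}(\lambda x)$ is the wrong direction — so instead I would argue: $\bar q_{2,\lambda'}(\lambda' x)$ and $\bar q_{2,\lambda}(\lambda x)$ are both negative on $(0,1)$, and one shows $|\bar q_{2,\lambda'}(\lambda' x)|<|\bar q_{2,\lambda}(\lambda x)|$ by a direct phase-plane/first-integral comparison using \eqref{firstintq2}: both profiles satisfy the same ODE with $|q_2'|$ an increasing function of $|q_2|$ and of the "energy" $q_2'(\text{endpoint})^2$, and a longer domain forces a smaller boundary slope, hence a smaller excursion of $|q_2|$ away from the boundary value $2$. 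Once this pointwise inequality $q_{2,\lambda}(x)^2$ decreasing in $\lambda$ is nailed down, the rest of the argument is the short envelope-theorem computation above.
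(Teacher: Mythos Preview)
Your Hadamard formula
\[
\mu'(\lambda)=\int_{-1}^{1}\Bigl\{-\tfrac{4}{\lambda^{3}}(h_\lambda')^{2}+4\,q_{2,\lambda}\,\partial_\lambda q_{2,\lambda}\,h_\lambda^{2}\Bigr\}\,dx
\]
is correct, but the sign analysis of the second term is wrong: at fixed $x\in(-1,1)$, the quantity $q_{2,\lambda}(x)^{2}$ is \emph{increasing} in $\lambda$, not decreasing. A quick check with the two limits already shows this. As $\lambda\to 0$ the equation becomes $q_2''\approx 0$ and the solution is close to the linear interpolant $q_{2,\lambda}(x)\approx -2x$, so $q_{2,\lambda}(x)^2\approx 4x^2<4$. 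As $\lambda\to\infty$ the rescaled profile converges to the heteroclinic, hence $q_{2,\lambda}(x)\to -2$ for each fixed $x>0$, i.e.\ $q_{2,\lambda}(x)^2\to 4$. Thus $q_{2,\lambda}(x)^2$ moves \emph{up} from $4x^2$ toward $4$. (Your own heuristic in the last paragraph actually points this way: ``a longer domain forces a smaller boundary slope, hence a smaller excursion of $|q_2|$ away from the boundary value $2$'' means $q_2^2$ stays \emph{closer to} $4$, i.e.\ larger, not smaller.) Consequently the two terms in your formula compete, and the argument as written does not give $\mu'(\lambda)<0$.

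The paper resolves this by performing the rescaling \emph{before} differentiating rather than after. Writing
\[
\mu(\lambda)=2\inf_{h\in H_0^1(-\lambda,\lambda),\ \int h^2=1}\int_{-\lambda}^{\lambda}\Bigl((h')^{2}+(\bar q_{2,\lambda}^{2}-4)h^{2}\Bigr)\,dy,
\]
one sees that as $\lambda$ grows the domain enlarges (more competitors) \emph{and}, by Lemma~\ref{barq2increas} at fixed $y$, the potential $\bar q_{2,\lambda}(y)^{2}-4$ decreases; both effects push the infimum down, so there is no competition. The paper then makes the potential decrease quantitative, proving $\bar q_{2,\lambda}^{2}(y)-\bar q_{2,\lambda_0}^{2}(y)\le -C(\lambda-\lambda_0)y^{2}$ via a shooting argument (which uses $\eta(\lambda)>0$ from Lemma~\ref{nutheta-8}), and plugs the eigenfunction at $\lambda_0$ as a test function for $\lambda>\lambda_0$ to get $\mu(\lambda)\le\mu(\lambda_0)-c(\lambda-\lambda_0)$. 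If you want to keep the envelope/Hadamard viewpoint, apply it to this rescaled Rayleigh quotient (treating the moving endpoint via a boundary term, or simply via the test-function inequality as the paper does); applying it in the original variable $x$ destroys the sign structure.
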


\begin{proof}
The fact that $\mu(\lambda)$ decreases can be obtained quite easily as a consequence of Lemma~\ref{barq2increas}. The fact that its derivative does not vanish, however, is not immediate. Our proof is very similar to the proof of \cite[Proposition 5.18]{alamabronsardmironescu12}.

First we show that
\begin{equation*}
\frac{\partial}{\partial\lambda}[\bar{q}_{2,\lambda}(x)]>0\quad \text{for }x\in (0,\lambda].
\end{equation*}
Consider the smooth map
\begin{equation*}
\phi:[0,+\infty)\times \mathbb{R}\to\mathbb{R},\:(x,\alpha)\mapsto\phi(x,\alpha),
\end{equation*}
defined as the solution of the Cauchy problem
\begin{equation*}
\left\lbrace
\begin{split}
&\phi_{xx} = (\phi^2-4)\phi,\\
&\phi(0,\alpha)=0,\; \phi_x(0,\alpha)=\alpha.\end{split}\right.
\end{equation*}
Clearly, for any $\lambda>0$, and for $x\in (0,\lambda]$,
\begin{equation*}
\bar{q}_{2,\lambda}(x)=\phi(x,\alpha_\lambda)\quad\text{with }\alpha_\lambda=\bar{q}_{2,\lambda}'(0).
\end{equation*}
Notice that $\alpha_\lambda$ solves
\begin{equation*}
\phi(\lambda,\alpha_\lambda)=-2.
\end{equation*}
We claim that, for any $x\in (0,\lambda]$, $\partial_\alpha\phi(x,\alpha_\lambda)>0$. In fact, let $h(x)=\partial_\alpha\phi(x,\alpha_\lambda)$. The function $h$ solves
\begin{equation*}
\left\lbrace
\begin{split}
h''=(3 \bar{q}_{2,\lambda}^2-4)h,\\
h(0)=0,\quad h'(0)=1.
\end{split}\right.
\end{equation*}
Assume that $h(x_0)=0$ for some $x_0\in (0,\lambda]$. Then $h\mathbf{1}_{(0,x_0)}$ would be an admissible test function in the variational problem defining $\eta(\lambda)$, and we would obtain $\eta(\lambda)=0$, which is not true. Recall that $\eta(\lambda)$ was defined in \eqref{eta}, as the first eigenvalue of the second variation of $I_\lambda$, and that we have shown in Lemma~\ref{nutheta-8} that $\eta(\lambda)>0$ for every $\lambda$.

In particular, $\partial_\alpha\phi(\lambda,\alpha_\lambda)>0$ and we can apply the implicit function theorem to obtain a smooth map $\lambda\mapsto\alpha(\lambda)$ such that
\begin{equation*}
\phi(\lambda,\alpha(\lambda))=-2.
\end{equation*}
Since $\alpha_\lambda$ solves the same equation and is close to $\alpha(\lambda)$, we must have $\alpha_\lambda=\alpha(\lambda)$. Moreover, differentiating the equation they satisfy, we obtain
\begin{equation*}
\alpha'(\lambda)=-\frac{\partial_x\phi(\lambda,\alpha_\lambda)}{\partial_\alpha\phi(\lambda,\alpha_\lambda)}= -\frac{\bar{q}_{2,\lambda}'(\lambda)}{\partial_\alpha\phi(\lambda,\alpha_\lambda)}>0.
\end{equation*}
In fact, the bounds \eqref{boundq2} ensure that $\bar{q}_{2,\lambda}'(\lambda)\leq 0$, and equality can not occur, else $\bar{q}_{2,\lambda}$ would satisfy the same Cauchy problem as the constant map $q\equiv -2$.

Eventually we have
\begin{equation*}
\frac{\partial}{\partial\lambda}[\bar{q}_{2,\lambda}(x)]=\alpha'(\lambda)\partial_\alpha\phi(x,\alpha_\lambda)>0\quad\text{for }x\in (0,\lambda ].
\end{equation*}

Let $\lambda_1>\lambda_0>0$. Using the facts that $(x,\lambda)\mapsto\bar{q}_{2,\lambda}(x)$ is smooth, that $\bar{q}_{2,\lambda}<0$ on $(0,+\infty)$, and that $\bar{q}_{2,\lambda}'(0)< 0$ (else $\bar{q}_{2,\lambda}$ would coincide with the constant solution $q\equiv 0$), we obtain
\begin{equation*}
\bar{q}_{2,\lambda}(x)\leq -c x \quad\forall x\in [0,\lambda],\:\lambda\in [\lambda_0,\lambda_1],
\end{equation*}
for some constant $c>0$. Similarly, we have
\begin{equation*}
\partial_\lambda[\bar{q}_{2,\lambda}(x)] \geq c' x \quad\forall x\in [0,\lambda],\:\lambda\in [\lambda_0,\lambda_1].
\end{equation*}
Therefore we deduce from the mean value theorem that
\begin{equation}\label{estim_mu'<0}
\bar{q}_{2,\lambda}^2 (x) -\bar{q}_{2,\lambda_0}^2 (x) \leq  -C (\lambda - \lambda_0)x^2 \quad\forall x\in (0,\lambda_0),\:\lambda\in [\lambda_0,\lambda_1].
\end{equation}
Note that, since $\bar q_{2,\lambda}$ is odd, estimate \eqref{estim_mu'<0} holds also for all $x\in (-\lambda_0,\lambda_0)$.

We remark that, since $\theta=-8$ and $q_1\equiv 2/3$, formula \eqref{numu} for $\mu(\lambda)$ simplifies to
\begin{equation*}\begin{split}
\mu(\lambda)&=2\inf_{h\in H_0^1(-1,1),\:\int h^2=1} \int_{-1}^1 \left( \frac{1}{\lambda^2}(h')^2 + (q_{2,\lambda}^2-4)h^2 \right) dx \\
& = 2 \inf_{h\in H_0^1(-\lambda,\lambda)\:\int h^2=1} \int_{-\lambda}^\lambda \left( (h')^2+(\bar q_{2,\lambda}^2-4)h^2 \right) dy.
\end{split}
\end{equation*}

Let $h_0\in H_0^1(-\lambda,\lambda)$, $\int h_0^2=1$, be a function at which the infimum defining $\mu(\lambda_0)$ is attained. Using the estimate \eqref{estim_mu'<0}, we compute, for $\lambda\in [\lambda_0,\lambda_1]$~:
\begin{equation*}\begin{split}
\mu(\lambda) & =2 \inf_{h\in H_0^1(-\lambda,\lambda),\: \int h^2=1} 
\int_{-\lambda}^{\lambda} \left[ (h')^2 + (\bar{q}_{2,\lambda}^2-4)h^2 \right] dx \\
& \leq 2 \int_{-\lambda_0}^{\lambda_0} \left[ (h_0')^2 + (\bar{q}_{2,\lambda}^2-4)h_0^2 \right] dx \\
& \leq \mu(\lambda_0)-2 C (\lambda -\lambda_0)\int h_0^2 x^2 dx,
\end{split}\end{equation*}
so that $\mu'(\lambda_0)>0$.
\end{proof}

To complete the proof of Theorem~\ref{chilambdatheta-8}, it remains to show that, for large $\lambda$, the eigenvalue exchange solution is unstable with respect to symmetry breaking perturbations. This is the content of the next result.

\begin{prop}\label{muinfty<0}
Assume $\theta=-8$. Then it holds
\begin{equation*}
\lim_{\lambda\to +\infty} \mu(\lambda) <0.
\end{equation*}
\end{prop}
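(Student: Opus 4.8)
The plan is to compute the limit of $\mu(\lambda)/2$ as $\lambda\to+\infty$ by passing to the rescaled formulation on $(-\lambda,\lambda)$ and showing that $\bar q_{2,\lambda}$ converges (locally uniformly) to the standard heteroclinic connection $\bar q_\infty(y)$ solving $\bar q_\infty'' = (\bar q_\infty^2-4)\bar q_\infty$ on $\mathbb R$ with $\bar q_\infty(\pm\infty)=\mp 2$, namely the explicit kink $\bar q_\infty(y)=-2\tanh(y\sqrt 2)$. With this in hand, the limiting problem is the one-dimensional Schr\"odinger operator $-\frac{d^2}{dy^2} + (\bar q_\infty^2-4)$ on $\mathbb R$, and it suffices to exhibit a single $H^1(\mathbb R)$ test function $h$ with $\int_{\mathbb R}\big[(h')^2 + (\bar q_\infty^2 - 4)h^2\big]\,dy < 0$, since an $H^1(\mathbb R)$ function can be cut off to an $H^1_0(-\lambda,\lambda)$ function with arbitrarily small loss for large $\lambda$, giving $\limsup_\lambda \mu(\lambda)\le $ (that negative number) $<0$.

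The natural choice is $h = \bar q_\infty'$, the derivative of the kink: differentiating the profile equation gives $(\bar q_\infty')'' = (3\bar q_\infty^2 - 4)\bar q_\infty'$, so $\bar q_\infty'$ is an $L^2(\mathbb R)$ solution of $-h'' + (3\bar q_\infty^2-4)h = 0$, i.e.\ the ground state (it is everywhere positive) of $-\frac{d^2}{dy^2}+(3\bar q_\infty^2-4)$ with eigenvalue $0$. Comparing the two potentials, $\bar q_\infty^2 - 4 < 3\bar q_\infty^2 - 4$ wherever $\bar q_\infty\neq 0$, so
\begin{equation*}
\int_{\mathbb R}\Big[(\bar q_\infty')^2 + (\bar q_\infty^2-4)(\bar q_\infty')^2\Big]\,dy
< \int_{\mathbb R}\Big[(\bar q_\infty')^2 + (3\bar q_\infty^2-4)(\bar q_\infty')^2\Big]\,dy = 0,
\end{equation*}
the last equality by testing the zero-eigenvalue equation against $\bar q_\infty'$. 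This gives a strictly negative value $m_\infty := \int_{\mathbb R}\big[(\bar q_\infty')^2+(\bar q_\infty^2-4)(\bar q_\infty')^2\big]\,dy$ for a fixed, compactly-essentially-supported (exponentially decaying) test function. Cutting off $\bar q_\infty'$ to $(-\lambda,\lambda)$ by a standard plateau cutoff, using exponential decay of $\bar q_\infty'$ to control the error, and renormalizing in $L^2$, we obtain for every large $\lambda$ a competitor in the infimum defining $\mu(\lambda)$ whose Rayleigh quotient is at most $m_\infty + o(1)$; hence $\limsup_{\lambda\to\infty}\mu(\lambda)\le 2m_\infty < 0$.

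The main obstacle is justifying the convergence $\bar q_{2,\lambda}\to \bar q_\infty$, or rather making sure we do not actually need its full strength. The cleanest route avoids any hard convergence analysis: by Lemma~\ref{barq2increas} the family $\bar q_{2,\lambda}(y)$ is increasing in $\lambda$ for $y>0$, and it is bounded below by $-2$; hence for each fixed $y$ the limit $\bar q_\infty(y):=\lim_{\lambda\to\infty}\bar q_{2,\lambda}(y)$ exists, is odd, lies in $[-2,0]$, and by standard ODE stability (the $\bar q_{2,\lambda}$ solve the same second-order equation with bounded data, so they are locally $C^2$-precompact) the limit solves the profile equation on all of $\mathbb R$; the only bounded solution with $\bar q_\infty(0)=0$ and $\bar q_\infty\le 0$ on $(0,\infty)$ and not identically zero is the kink $-2\tanh(\sqrt2\, y)$ — one must rule out $\bar q_\infty\equiv 0$, which follows because $\bar q_{2,\lambda}'(0)$ is increasing in $\lambda$ (shown in the proof of Proposition~\ref{mu'<0}) and is nonzero for, say, $\lambda=\lambda_0$, so $\bar q_\infty'(0)\ge \bar q_{2,\lambda_0}'(0)<0$. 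Then for the Rayleigh-quotient estimate one only needs the local uniform convergence $\bar q_{2,\lambda}\to\bar q_\infty$ on the (fixed) support of the cutoff test function, which is immediate from the monotone pointwise convergence plus Dini's theorem (continuous limit, monotone in $\lambda$). Plugging the cut-off kink derivative into $\mu(\lambda)$, estimating $\int_{-\lambda}^\lambda[(h')^2+(\bar q_{2,\lambda}^2-4)h^2]\,dy \to m_\infty$, and invoking $m_\infty<0$ from the comparison above completes the proof.
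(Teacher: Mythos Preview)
Your approach is essentially the paper's: rescale to $(-\lambda,\lambda)$, pass to a limiting profile $\bar q_\infty$ on $\mathbb R$, and test with $h=\bar q_\infty'$, using that $h''=(3\bar q_\infty^2-4)h$ so that $\int\big[(h')^2+(\bar q_\infty^2-4)h^2\big]=-2\int \bar q_\infty^2 h^2<0$. The paper does not bother to identify the limit explicitly as $-2\tanh(\sqrt 2\,y)$, but otherwise the steps (monotonicity from Lemma~\ref{barq2increas}, local $C^2$ compactness from the ODE, cutting off a fixed test function) match.

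There is, however, one genuine slip. To rule out $\bar q_\infty\equiv 0$ you use that $\alpha_\lambda:=\bar q_{2,\lambda}'(0)$ is increasing in $\lambda$ and conclude ``$\bar q_\infty'(0)\ge \bar q_{2,\lambda_0}'(0)<0$''. That inequality is true but useless: an increasing sequence of negative numbers bounded below by $\alpha_{\lambda_0}$ may perfectly well converge to $0$, so this does not exclude $\bar q_\infty'(0)=0$. The paper fixes this differently: it notes that the rescaled energy $\int\big[(\bar q_{2,\lambda}')^2+(\bar q_{2,\lambda}^2-4)^2\big]$ is nonincreasing in $\lambda$ (since $\bar q_{2,\lambda}$ is admissible for $\lambda'>\lambda$), so by Fatou the limit has finite energy, and then monotonicity plus oddness force $\bar q_\infty(\pm\infty)=\mp 2$, in particular $\bar q_\infty\not\equiv 0$. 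A one-line alternative in your framework is the first integral $\tfrac12(\bar q_{2,\lambda}')^2-\tfrac14(\bar q_{2,\lambda}^2-4)^2=\mathrm{const}$: evaluating at $y=0$ and $y=\lambda$ gives $\alpha_\lambda^2=8+(\bar q_{2,\lambda}'(\lambda))^2>8$, hence $\alpha_\lambda\le -2\sqrt 2$ uniformly in $\lambda$, so $\bar q_\infty'(0)\le -2\sqrt 2<0$. With this correction your proof goes through.
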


\begin{proof}
We start by studying the limit of the rescaled map $\bar q_{2,\lambda}(y)=q_{2,\lambda}(y/\lambda)$ (extended to $(-\infty,+\infty)$ by $\bar q_{2,\lambda} \equiv \mp 2$ near $\pm\infty$). This rescaled map $\bar q_{2,\lambda}$ minimizes the integral
\begin{equation*}
J_\lambda (\bar q_2) = \int_{-\lambda}^\lambda \left( (\bar q_2')^2 + (\bar q_2^2-4)^2 \right) dy
\end{equation*}
subject to the boundary conditions $\bar q_2(\pm\lambda)=\mp 2$. For $\lambda'>\lambda$, $\bar q_{2,\lambda}$ is admissible in $J_\lambda'$. Therefore we deduce that
\begin{equation*}
\lambda\mapsto J_\lambda(\bar q_{2,\lambda})\quad\text{is non increasing.}
\end{equation*}
In particular, it holds
\begin{equation*}
\int_{-\infty}^{+\infty} \left( (\bar q_{2,\lambda}')^2 + (\bar q_{2,\lambda}^2-4)^2 \right) dx \leq C,
\end{equation*}
and $(\bar q_{2,\lambda})_{\lambda>0}$ is bounded in $H^1_{loc}(\mathbb R)$, so that we may extract a weakly converging subsequence. On the other hand, we know from Lemma~\ref{barq2increas} that $\bar q_{2,\lambda}(y)$ is a monotonic function of $\lambda$, so that the whole sequence converges pointwise. Therefore the weak $H^1_{loc}$ limit is unique and we do not need to take a subsequence: there exists $\bar q_{2,*}\in H^1_{loc}(\mathbb R)$ such that $\bar q_{2,\lambda}$ converges to $\bar q_{2,*}$ as $\lambda\to + \infty$, on every compact interval, $H^1$-weakly and uniformly.
Using the differential equation satisfied by $\bar q_{2,\lambda}$, we see that the second derivatives converge uniformly on every compact interval, so that we actually obtain convergence in $C^2_{loc}(\mathbb R)$. In particular, the rescaled limiting map $\bar q_{2,*}\in C^2(\mathbb R^ n)$ solves the equation
\begin{equation}\label{barq2*}
\bar q_{2,*}'' = (\bar q_{2,*}^ 2-4)\bar q_{2,*}.
\end{equation}
Moreover, using Fatou's lemma, we find that the map $\bar q_{2,*}$ has finite energy:
\begin{equation}\label{barq2*finite}
\int \left( (\bar q_{2,*}')^2 + (\bar q_{2,*}^2-4)^2 \right) dy < +\infty.
\end{equation}
Since $\bar q_{2,*}$ is obviously odd and non increasing, the finite energy property implies that it satisfies the boundary conditions
\begin{equation*}
\bar q_{2,*}(-\infty)=2,\quad \bar q_{2,*}(+\infty)=-2.
\end{equation*}
Recall that, since $\theta=-8$ and $q_1\equiv 2/3$,
\begin{equation*}
\mu(\lambda)  = 2 \inf_{h\in H_0^1(-\lambda,\lambda),\: \int h^2 =1} 
\int_{-\lambda}^\lambda \left( (h')^2 + (\bar q_{2,\lambda}^2-4)h^2 \right) dx.
\end{equation*}
We claim that the convergence of $\bar q_{2,\lambda}$ towards $\bar q_{2,*}$ implies that
\begin{equation}\label{limmu}
\lim_{\lambda\to +\infty} \mu(\lambda) = 2 \inf_{h\in H^1(\mathbb R),\: \int h^2=1} 
\int_{-\infty}^{+\infty} \left( (h')^2 + (\bar q_{2,*}^2-4)h^2 \right) dx.
\end{equation}
Indeed, for any $\varepsilon>0$, we may find $h_0\in C_c^\infty(\mathbb R)$ such that
\begin{equation*}
\int_{-\infty}^{+\infty} \left( (h_0')^2 + (\bar q_{2,*}^2-4)h_0^2 \right) dx \leq m + \varepsilon,
\end{equation*}
where $m$ denotes the infimum in the right hand side of \eqref{limmu}. Choose $\Lambda>0$ such that $\mathrm{supp}\: h_0 \subset [-\Lambda,\Lambda]$. Then, for any $\lambda\geq \Lambda$, it holds
\begin{equation*}
\mu(\lambda)\leq 2 \int_{-\infty}^{+\infty} \left( (h_0')^2 + (\bar q_{2,\lambda}^2-4)h_0^2 \right) dx.
\end{equation*}
Since $\bar q_{2,\lambda}$ converges uniformly to $\bar q_{2,*}$ on $\mathrm{supp}\: h_0$, we may pass to the limit in the last inequality, and deduce that
\begin{equation*}
\lim_{\lambda\to +\infty} \mu(\lambda)\leq 2m+2\varepsilon,
\end{equation*}
which proves \eqref{limmu} since $\varepsilon$ is arbitrary.

In view of \eqref{limmu}, to conclude the proof we need to find a function $h\in H^1(\mathbb R)$, $h\neq 0$, such that
\begin{equation*}
\int_{-\infty}^{+\infty} \left( (h')^2 + (\bar q_{2,*}^2-4)h^2 \right) dx<0.
\end{equation*}

We claim that $h=\bar q_{2,*}'$ is a suitable choice. The fact that $h\neq 0$ is clear in view of the boundary conditions satisfied by $\bar q_{2,*}$. The fact that $h\in H_0^1(\mathbb R)$ follows from the finite energy property \eqref{barq2*finite}. Indeed, \eqref{barq2*finite} clearly implies that $h\in L^2(\mathbb R)$, and also that $(\bar q_{2,*}^2 -4)\in L^2$, so that
\begin{equation*}
h'=(\bar q_{2,*}^2 -4) \bar q_{2,*} \in L^2(\mathbb R)
\end{equation*}
since $\bar q_{2,*} \in L^\infty$.

Moreover, differentiating the equation satisfied by $\bar q_{2,*}$, we obtain
\begin{equation*}
h''=(3\bar q_{2,*}^2-4)h,
\end{equation*}
so that
\begin{equation*}
\int_{-\infty}^{+\infty} \left( (h')^2 + (\bar q_{2,*}^2-4)h^2 \right) dx = -2 \int \bar q_{2,*}^2 h^2 \: dx <0,
\end{equation*}
and the proof is complete.
\end{proof}

Now Theorem~\ref{chilambdatheta-8} is obtained directly by putting together the propositions \ref{q1const}, \ref{mu'<0} and \ref{muinfty<0} above.

\subsection{The proof of Theorem~\ref{bifurtheta-8}}\label{ss_bifurtheta-8}

We define the map
\begin{equation*}
\mathcal G:(0,+\infty)\times H_0^1(-1,1)^3 \longrightarrow H^{-1}(-1,1)^3,
\end{equation*}
defined by
\begin{equation*}
G(\lambda,Q)=DE_\lambda (\chi_\lambda+Q),\quad\text{where }\chi_\lambda=\left(\frac{2}{3},q_{2,\lambda},0\right).
\end{equation*}
By definition of the eigenvalue exchange solution, it holds
\begin{equation*}
\mathcal G (\lambda,0)=0.
\end{equation*}
From Section~\ref{s_ee} we know that
\begin{equation*}
D_Q \mathcal G (\lambda,0)[h_1,h_2,h_3]=(\mathcal M_\lambda (h_1,h_2),\mathcal L_\lambda h_3),
\end{equation*}
and $\mathcal M_\lambda$ is invertible since $\nu(\lambda)>0$. Recall indeed from Proposition~\ref{chilambda} that the branch $\chi_\lambda$ is defined only when $\nu(\lambda)>0$.

As for $\mathcal L_{\lambda_c}$, its first eigenvalue is $\mu(\lambda_c)=0$ and it is simple. Therefore we obtain
\begin{equation*}
\dim \mathrm{Ker}\: D_Q\mathcal G(\lambda_c,0)=1=\mathrm{codim}\:\mathrm{Ran}\: D_Q\mathcal G(\lambda_c,0),
\end{equation*}
since $D_Q\mathcal G(\lambda_c,0)$ is obviously Fredholm of index $0$.

Eventually, we show that, for all $H\in\mathrm{Ker}\: D_Q\mathcal G(\lambda_c,0)$, $H\neq 0$, it holds
\begin{equation*}
\partial_\lambda D_Q\mathcal G(\lambda_c,0)\cdot H \notin \mathrm{Ran}\: D_Q\mathcal G (\lambda_c,0).
\end{equation*}
To this end, we use an argument similar to one in the proof of \cite[Theorem~5.24]{alamabronsardmironescu12}. Recall that
\begin{equation*}
\mathrm{Ker}\: D_Q\mathcal G(\lambda_c,0)=\mathrm{Span}\: (0,0,h_{\lambda_c}),
\end{equation*}
where $h_\lambda$ is an eigenfunction associated with the first eigenvalue of $\mathcal L_\lambda$ and can be chosen to depend smoothly on $\lambda$ (see the proof of Proposition~\ref{chilambda}).

Hence it suffices to show that
\begin{equation*}
\partial_\lambda\mathcal L_\lambda |_{\lambda=\lambda_c} h_{\lambda_c} \notin \mathrm{Ran}\: \mathcal L_{\lambda_c}.
\end{equation*}
We obtain this latter fact as a consequence of $\mu'(\lambda_c)<0$. Indeed, assume that there exists $h\in H_0^1$ such that
\begin{equation*}
\partial_\lambda\mathcal L_\lambda |_{\lambda=\lambda_c} h_{\lambda_c} = \mathcal L_{\lambda_c} h.
\end{equation*}
Then we compute, using the facts that $\mathcal L_{\lambda_c}h_{\lambda_c} =0$ and that $\mathcal L_{\lambda_c}$ is symmetric,
\begin{equation*}\begin{split}
0  > \mu'(\lambda_c) &=\frac{d}{d\lambda}\left[<\mathcal L_\lambda h_\lambda,h_\lambda>\right]_{\lambda=\lambda_c}\\
& = <\partial_\lambda\mathcal L_\lambda |_{\lambda=\lambda_c} h_{\lambda_c},h_{\lambda_c}>\\
&= <\mathcal L_{\lambda_c} h,h_{\lambda_c}> = 0,
\end{split}\end{equation*}
and we obtain a contradiction.

Thus, all the assumptions needed to apply Crandall-Rabinowitz' bifurcation theorem \cite[Theorem 1.7]{crandallrabinowitz71} are satisfied: there exists a smooth function $\lambda(t)$ defined for small $t$, with $\lambda(0)=\lambda_c$, and a regular family $H_t=(h_{1,t},h_{2,t},h_{3,t})$ taking values in $(0,0,h_{\lambda_c})^\perp\subset H_0^1(-1,1)^3$ with $H_0=0$, such that, for any $Q$ close enough to $\chi_{\lambda_c}$,
\begin{equation*}
\quad DE_\lambda (Q)=0 \quad\Leftrightarrow\quad
\begin{cases}
& Q=\chi_\lambda \\
\text{or} & \lambda=\lambda(t)\text{ and }Q=\chi_{\lambda(t)}+t (0,0,h_{\lambda_c}) + t^2 H_t.
\end{cases}
\end{equation*}

One can say a little bit more about the new branch of solutions thus obtained. Indeed, changing $q_3$ to $-q_3$ leaves the equations \eqref{EL_q} invariant. More precisely, given $Q=(q_1,q_2,q_3)$ a solution of \eqref{EL_q}, the map $\widetilde Q = (q_1,q_2,-q_3)$ is automatically a solution of \eqref{EL_q}. In particular, to a solution
\begin{equation*}
Q=\chi_{\lambda(t)} + t(0,0,h_{\lambda_c})+t^2 H_t,
\end{equation*}
corresponds a solution
\begin{equation*}
\widetilde Q = \chi_{\lambda(t)} - t (0,0,h_{\lambda_c})+t^2 \widetilde H_t.
\end{equation*}
Since both $Q$ and $\widetilde Q$ are close to $\chi_{\lambda_c}$, we deduce that 
\begin{equation*}
\lambda(t)=\lambda(-t)\quad\text{and }h_{1,-t}=h_{1,t},\: h_{2,-t}=h_{2,t},\: h_{3,-t}=-h_{3,t}.
\end{equation*}
This ends the proof of Theorem~\ref{bifurtheta-8}.

\section{The perturbed case $\theta\approx -8$}\label{s_perturb}

Now we turn back to the case of a general temperature $\theta\in (-\infty,1]$. A closer look at the proof in subsection~\ref{ss_bifurtheta-8} will convince us that a result similar to Theorem~\ref{bifurtheta-8} holds for any $\theta$ satisfying some nondegeneracy assumptions. After having checked that these non degeneracy assumptions are stable under small perturbations of $\theta$, we will obtain as a corollary a result similar to Theorem~\ref{bifurtheta-8} in the perturbed case $\theta\approx -8$.

\begin{thm}\label{bifur}
Assume that $\theta$ is such that the branch of eigenvalue exchange solutions $\lambda\mapsto\chi_\lambda$ given by Proposition~\ref{chilambda} has the following two properties:
\begin{itemize}
\item[(i)] there exists $\lambda \in (0,\lambda_*)$ such that $\mu(\lambda)<0$.
\item[(ii)] denoting by $\lambda_c>0$ the infimum of all such $\lambda$:
\begin{equation*}
\lambda_c = \inf \left\lbrace \lambda\in (0,\lambda_*)\colon \mu(\lambda)<0 \right\rbrace,
\end{equation*} 
it holds
\begin{equation*}
\mu'(\lambda_c)<0.
\end{equation*}
\end{itemize}
Then there exist $\delta,\varepsilon>0$ and a neighborhood $\mathfrak A$ of $\chi_{\lambda_c}$ in $\mathcal H$, such that the solutions of
\begin{equation*}
DE_\lambda(Q)=0,\qquad (\lambda,Q)\in (\lambda_c-\delta,\lambda_c+\delta)\times\mathfrak A,
\end{equation*}
are exactly
\begin{equation*}
Q=\chi_\lambda\quad\text{or}\quad  
\begin{cases}
\lambda=\lambda(t)\\
Q=\chi_{\lambda_c}+t(0,0,h_c)+t^2H_t,
\end{cases}
\text{ for some }t\in (-\varepsilon,\varepsilon)
\end{equation*}
where $\lambda(t)\in(\lambda_c-\delta,\lambda_c+\delta)$ and $H_t \in h_c^\perp$ are smooth functions of $t\in (-\varepsilon,\varepsilon)$. Moreover, the  following symmetry properties are satisfied:
\begin{equation}
\lambda(-t)=\lambda(t),\quad\text{and}\quad h_{1,-t}=h_{1,t},\: h_{2,-t}=h_{2,t},\: h_{3,-t}=-h_{3,t},
\end{equation}
where $H_t$ is identified with $(h_{1,t},h_{2,t},h_{3,t})$ via \eqref{q_123}.
\end{thm}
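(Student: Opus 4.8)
The plan is to mimic the structure of the proof of Theorem~\ref{bifurtheta-8} given in Subsection~\ref{ss_bifurtheta-8}, isolating precisely where the special value $\theta=-8$ was used and observing that only hypotheses (i) and (ii) are actually needed. First I would set up the same map
\begin{equation*}
\mathcal G\colon (0,+\infty)\times H_0^1(-1,1)^3\to H^{-1}(-1,1)^3,\qquad \mathcal G(\lambda,Q)=DE_\lambda(\chi_\lambda+Q),
\end{equation*}
where now $\chi_\lambda=(q_{1,\lambda},q_{2,\lambda},0)$ is the branch of eigenvalue exchange solutions furnished by Proposition~\ref{chilambda}, defined on $(0,\lambda_*)$. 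As before $\mathcal G(\lambda,0)=0$, and the splitting from Section~\ref{s_ee} gives $D_Q\mathcal G(\lambda,0)[h_1,h_2,h_3]=(\mathcal M_\lambda(h_1,h_2),\mathcal L_\lambda h_3)$. On $(0,\lambda_*)$ we have $\nu(\lambda)>0$ by construction of $\lambda_*$ in Proposition~\ref{chilambda}, so $\mathcal M_\lambda$ is invertible; the point $\lambda_c$ defined in hypothesis (ii) satisfies $\lambda_c\in(0,\lambda_*)$ (it is a limit of points where $\mu<0$, and $\mu$ is continuous, hence $\mu(\lambda_c)=0$ and all smaller $\lambda$ have $\mu\geq 0$; combined with (i) this places $\lambda_c$ strictly inside the interval of definition). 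The key structural facts — $\mathcal L_\lambda$ is a Sturm–Liouville operator, so $\mu(\lambda)$ is a simple eigenvalue with a one-dimensional kernel spanned by some $h_c=h_{\lambda_c}$, $\mathcal L_{\lambda_c}$ is Fredholm of index $0$ and self-adjoint so $\mathrm{Ran}\,\mathcal L_{\lambda_c}=\{f:\langle f,h_c\rangle=0\}$, and $\lambda\mapsto h_\lambda$ can be chosen smooth near $\lambda_c$ — all hold verbatim as in the proof of Proposition~\ref{chilambda}, with no reference to $\theta=-8$.

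Next I would verify the transversality (Crandall–Rabinowitz) condition: for $0\neq H\in\mathrm{Ker}\,D_Q\mathcal G(\lambda_c,0)=\mathrm{Span}\,(0,0,h_c)$, one needs $\partial_\lambda D_Q\mathcal G(\lambda_c,0)\cdot H\notin\mathrm{Ran}\,D_Q\mathcal G(\lambda_c,0)$, which reduces to $\partial_\lambda\mathcal L_\lambda|_{\lambda_c}h_c\notin\mathrm{Ran}\,\mathcal L_{\lambda_c}$. This is exactly the computation in Subsection~\ref{ss_bifurtheta-8}: if $\partial_\lambda\mathcal L_\lambda|_{\lambda_c}h_c=\mathcal L_{\lambda_c}h$ for some $h\in H_0^1$, then differentiating $\mu(\lambda)=\langle\mathcal L_\lambda h_\lambda,h_\lambda\rangle$ at $\lambda_c$ and using $\mathcal L_{\lambda_c}h_c=0$ plus self-adjointness gives $\mu'(\lambda_c)=\langle\partial_\lambda\mathcal L_\lambda|_{\lambda_c}h_c,h_c\rangle=\langle\mathcal L_{\lambda_c}h,h_c\rangle=0$, contradicting hypothesis (ii). With all hypotheses of \cite[Theorem~1.7]{crandallrabinowitz71} in place, I get the smooth function $\lambda(t)$ with $\lambda(0)=\lambda_c$ and the regular family $H_t\in h_c^\perp$ with $H_0=0$, so that near $\chi_{\lambda_c}$ the solution set is exactly the trivial branch $Q=\chi_\lambda$ together with the bifurcating branch $Q=\chi_{\lambda(t)}+t(0,0,h_c)+t^2H_t$; absorbing the $t$-dependence of $\chi_{\lambda(t)}$ around $\chi_{\lambda_c}$ into the $t^2H_t$ term (using smoothness of $\lambda\mapsto\chi_\lambda$ and $\lambda(0)=\lambda_c$) puts it in the stated form.

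Finally, the symmetry statement follows, as in the $\theta=-8$ case, from the observation that the substitution $q_3\mapsto -q_3$ leaves the system \eqref{EL_q} invariant: if $Q=\chi_{\lambda(t)}+t(0,0,h_c)+t^2H_t$ solves $DE_\lambda=0$ then so does $\widetilde Q=(q_1,q_2,-q_3)$, and $\widetilde Q$ is also close to $\chi_{\lambda_c}$ (which has $q_3\equiv 0$, hence is fixed by this involution). By the uniqueness part of the Crandall–Rabinowitz parametrization, $\widetilde Q$ must correspond to parameter $-t$, forcing $\lambda(-t)=\lambda(t)$, $h_{1,-t}=h_{1,t}$, $h_{2,-t}=h_{2,t}$, and $h_{3,-t}=-h_{3,t}$. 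The genuinely new content of this section, which I would treat separately before invoking Theorem~\ref{bifur}, is showing that hypotheses (i) and (ii) are stable under perturbation of $\theta$ around $\theta_0=-8$ — i.e. that $\lambda_*$, $\chi_\lambda$, $\mu(\lambda)$ and $\mu'(\lambda_c)$ depend continuously (or smoothly) on $\theta$, so that the strict inequalities $\mu<0$ somewhere and $\mu'(\lambda_c)<0$ established for $\theta=-8$ in Theorem~\ref{chilambdatheta-8} persist for nearby $\theta$. I expect the main obstacle to be precisely this continuity-in-$\theta$ argument: one must track the branch $\chi_\lambda^\theta$ jointly in $(\lambda,\theta)$ via the implicit function theorem (as in Proposition~\ref{chilambda}, now with $\theta$ an extra parameter), control how far it extends as $\theta$ varies, and show the first eigenvalue $\mu$ and its $\lambda$-derivative at the crossing point vary continuously — the transversality $\mu'(\lambda_c)<0$ being an open condition is what makes the perturbation go through, but making the joint smoothness and the uniform-in-$\theta$ extension of the branch rigorous is the delicate part. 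By contrast, the reduction of the bifurcation statement itself to (i)–(ii) is essentially a transcription of Subsection~\ref{ss_bifurtheta-8}.
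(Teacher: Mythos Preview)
Your proposal is correct and follows exactly the paper's approach: the paper's own proof of Theorem~\ref{bifur} is a single sentence stating that the proof of Theorem~\ref{bifurtheta-8} in Subsection~\ref{ss_bifurtheta-8} used only conditions \textit{(i)} and \textit{(ii)} and can therefore be reproduced verbatim. Your final paragraph on stability of \textit{(i)}--\textit{(ii)} under perturbation of $\theta$ is not part of this theorem but of Proposition~\ref{stable}, as you correctly recognize.
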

\begin{proof}
Looking at the proof of Theorem~\ref{bifurtheta-8} in subsection~\ref{ss_bifurtheta-8}, we see that we have really only used the facts that for $\theta=-8$, \textit{(i)} and \textit{(ii)} are satisfied. Hence the proof of Theorem~\ref{bifurtheta-8} may be reproduced word for word to prove Theorem~\ref{bifur}.
\end{proof}

Theorem~\ref{bifur} is an abstract theorem: if $\theta$ satisfies some conditions, then we have a concrete result. But it does not tell us anything about the validity of such conditions in general. 

Let us say a few words about these conditions. In view of Proposition~\ref{chilambda},  $\lambda_*$ can be interpreted as the point where the eigenvalue exchange solution looses its stability with respect to symmetry-preserving  perturbations. Condition \textit{(i)} asks for $\mu(\lambda)$ to become negative before this point is reached. That is, condition \textit{(i)} could be rephrased as: as $\lambda$ grows, starting from the unique solution for small $\lambda$, a symmetry-breaking loss of stability occurs \emph{before} a possible symmetry-preserving loss of stability. And condition \textit{(ii)} asks for the symmetry-breaking loss of stability to be non degenerate. Hence condition \textit{(ii)} is typically a generic condition.

Remark that in the special case $\theta=-8$, we have shown (Theorem~\ref{chilambdatheta-8}) that symmetry-preserving loss of stability does not occur at all, and that symmetry-breaking loss of stability does occur, in a non degenerate way. Now we are going to show that these conditions propagate to nearby $\theta$. This is the content of the next result.

\begin{prop}\label{stable}
If $\theta_0 < 1$ satisfies conditions (i)-(ii) of Theorem~\ref{bifur}, then there exists $\varepsilon>0$ such that every $\theta < 1$ with $|\theta-\theta_0|<\varepsilon$ also satisfies (i)-(ii).
\end{prop}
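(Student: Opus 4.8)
The plan is to show that the data entering conditions (i)--(ii) depend continuously (indeed smoothly) on $\theta$, so that the strict inequalities $\mu(\lambda)<0$ and $\mu'(\lambda_c)<0$, together with the existence of the branch up to $\lambda_*$, survive small perturbations. The main work is to make the $\theta$-dependence explicit and to argue that $\lambda_c(\theta)$ varies continuously; once that is in place the propagation of the open conditions is routine.

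First I would reinstate $\theta$ in the notation, writing $\chi_\lambda^\theta=(q_{1,\lambda}^\theta,q_{2,\lambda}^\theta)$, $\nu(\lambda,\theta)$, $\mu(\lambda,\theta)$, $\lambda_*(\theta)$, $\lambda_c(\theta)$. The key point is that the map $(\lambda,\chi,\theta)\mapsto DE_\lambda^{ee}(\chi)$ (equivalently, the left-hand side of \eqref{EL_ee} with $q_+=q_+(\theta)$ built into the boundary data, after an affine shift to fixed homogeneous boundary conditions) is smooth jointly in all three variables; the constant $q_+(\theta)=\tfrac16(1+\sqrt{1-\theta})$ is smooth for $\theta<1$. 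Since, for $\theta_0$, the branch $\lambda\mapsto\chi_\lambda^{\theta_0}$ is a smooth curve of nondegenerate critical points of $E_\lambda^{ee,\theta_0}$ on the compact interval $[\lambda_0,\lambda_c(\theta_0)]$ (nondegeneracy holds because $\nu(\lambda,\theta_0)>0$ on $(0,\lambda_*(\theta_0))$ by Proposition~\ref{chilambda} and $\lambda_c(\theta_0)<\lambda_*(\theta_0)$ by (i)), the implicit function theorem applied along this compact curve produces, for $\theta$ close to $\theta_0$, a smooth family $(\lambda,\theta)\mapsto\chi_\lambda^\theta$ of eigenvalue exchange solutions defined on a fixed interval $(0,\lambda_c(\theta_0)+\eta)$, with $\nu(\lambda,\theta)>0$ there; by the uniqueness statement in Proposition~\ref{chilambda} this is the continuation of the branch, so in particular $\lambda_*(\theta)\ge\lambda_c(\theta_0)+\eta$. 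Consequently the operators $\mathcal L_\lambda^\theta$ of \eqref{Llambda} depend smoothly on $(\lambda,\theta)$ on this region, and exactly as in the proof of Proposition~\ref{chilambda} (using that the first eigenvalue of a Sturm--Liouville operator is simple, hence $i$-simple, and invoking \cite[Lemma~1.3]{crandallrabinowitz73}) the first eigenvalue $\mu(\lambda,\theta)$ is a smooth function of $(\lambda,\theta)$ on a neighborhood of $\{(\lambda,\theta_0):\lambda\in[\lambda_0,\lambda_c(\theta_0)+\eta]\}$.

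Now the conclusion follows from elementary continuity. Fix $\lambda_1\in(\lambda_c(\theta_0),\lambda_*(\theta_0))$ with $\mu(\lambda_1,\theta_0)<0$; by continuity $\mu(\lambda_1,\theta)<0$ for $\theta$ near $\theta_0$, which gives condition (i) for such $\theta$ and shows $\lambda_c(\theta)\le\lambda_1$. For the other side, pick $\lambda_2\in(\lambda_0,\lambda_c(\theta_0))$ close to $\lambda_c(\theta_0)$; since $\mu(\cdot,\theta_0)>0$ on $(0,\lambda_c(\theta_0))$ we have $\mu(\lambda_2,\theta_0)>0$, hence $\mu(\lambda_2,\theta)>0$ for $\theta$ near $\theta_0$, so $\lambda_c(\theta)\ge\lambda_2$. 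Thus $\lambda_c(\theta)\in[\lambda_2,\lambda_1]$, a compact interval inside the region where $\mu$ is smooth. To pin down $\lambda_c(\theta)$ and get (ii), use that $\mu'(\lambda_c(\theta_0),\theta_0)<0$: near $(\lambda_c(\theta_0),\theta_0)$ the equation $\mu(\lambda,\theta)=0$ can be solved by the implicit function theorem for a smooth $\theta\mapsto\lambda(\theta)$ with $\lambda(\theta_0)=\lambda_c(\theta_0)$, and $\partial_\lambda\mu(\lambda(\theta),\theta)<0$ for $\theta$ near $\theta_0$ by continuity; because $\mu$ is strictly decreasing through its zero there, for $\theta$ close enough to $\theta_0$ the zero $\lambda(\theta)$ is the \emph{first} zero of $\mu(\cdot,\theta)$ on $[\lambda_2,\lambda_1]$ (a value slightly below it keeps $\mu>0$, a value slightly above keeps $\mu<0$), so $\lambda_c(\theta)=\lambda(\theta)$ and $\mu'(\lambda_c(\theta),\theta)<0$, which is (ii).

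The main obstacle, and the only genuinely non-formal point, is ensuring that $\lambda_c(\theta)$ does not ``escape'' under perturbation --- i.e. that the first sign change of $\mu(\cdot,\theta)$ stays in the compact region where we control smoothness and does not jump to some smaller $\lambda$ or get absorbed near $0$. This is handled precisely by the two-sided trapping $\lambda_2\le\lambda_c(\theta)\le\lambda_1$ above, which in turn rests on the uniform control of the branch on a fixed interval $(0,\lambda_c(\theta_0)+\eta)$ obtained from the implicit function theorem along the compact curve; the strictness of $\mu'(\lambda_c(\theta_0))<0$ is what upgrades this trapping into the identification $\lambda_c(\theta)=\lambda(\theta)$ and delivers condition (ii). Everything else is continuity of eigenvalues and smooth dependence of solutions of ODEs on parameters.
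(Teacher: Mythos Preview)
Your overall strategy coincides with the paper's: extend the branch $\chi_\lambda^\theta$ smoothly in $(\lambda,\theta)$ via the implicit function theorem, deduce joint smoothness of $\mu$, then use continuity and the implicit function theorem applied to $\mu(\lambda,\theta)=0$ to locate $\lambda_c(\theta)$. However, there is a genuine gap in your trapping argument.

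You write: ``$\mu(\lambda_2,\theta)>0$ for $\theta$ near $\theta_0$, so $\lambda_c(\theta)\ge\lambda_2$.'' This implication is false as stated. By definition $\lambda_c(\theta)=\inf\{\lambda:\mu(\lambda,\theta)<0\}$, and positivity of $\mu$ at the single point $\lambda_2$ does not prevent $\mu(\lambda',\theta)<0$ for some $\lambda'<\lambda_2$; nothing you have said rules this out. You correctly identify this ``escape'' of $\lambda_c$ as the main obstacle in your last paragraph, but then claim it is ``handled precisely by the two-sided trapping $\lambda_2\le\lambda_c(\theta)\le\lambda_1$'' --- which is exactly the statement whose lower half you have not proved.

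The paper closes this gap by covering $(0,\lambda_c(\theta_0))$ in three pieces. First, it observes that the $\lambda_0$ from Proposition~\ref{lambda0} can be chosen uniformly for $\theta$ near $\theta_0$ (the relevant $W^{2,\infty}$ bound on $f$ depends continuously on $\theta$), and that the proof of Theorem~\ref{unique} actually gives a uniform lower bound $\mu(\theta,\lambda)\ge c_0>0$ on $(0,\lambda_0)$. Second, on the compact interval $[\lambda_0,\lambda_c(\theta_0)-\delta]$ it uses uniform continuity of $\mu$ (not pointwise continuity at a single $\lambda_2$) to keep $\mu(\theta,\cdot)>0$ there for $\theta$ close to $\theta_0$. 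Third, on $(\lambda_c(\theta_0)-\delta,\lambda_c(\theta_0)+\delta)$ it uses the sign of $\partial_\lambda\mu$. Only after assembling these three pieces can one conclude $\mu(\theta,\lambda)>0$ for all $\lambda<\lambda(\theta)$, and hence $\lambda_c(\theta)=\lambda(\theta)$. Your argument needs the first two ingredients; once you add them, the rest of your outline goes through and matches the paper's proof.
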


\begin{proof}
During this proof we will emphasize the dependence on $\theta$ of the objects we have been working with. For instance we will write $\mathcal H_\theta$, $E_{\theta,\lambda}^{ee}$, $\lambda_*(\theta)$, and so on.

Let us start by remarking that a value $\lambda_0$ (provided by Proposition~\ref{lambda0}), under which there is uniqueness of the solution, may be chosen independently of $\theta$ in a neighborhood of $\theta_0$. Indeed, the proof of Lemma~\ref{unique} show that this value of $\lambda_0$ depends on the $W^{2,\infty}$ norm of the bulk energy density $f$ restricted to values of $Q$ satisfying the maximum principle \eqref{maxpp_Q}. It is clear from the expression of $f$ and \eqref{maxpp_Q} that this $W^{2,\infty}$ norm depends at least continuously on $\theta$. We may thus choose a $\lambda_0$ that works for all $\theta$ in a fixed neighborhood of $\theta_0$. 

The idea of the proof is to use the implicit function theorem to define eigenvalue exchange solutions $\chi_{\lambda,\theta}$ depending smoothly on $\lambda$ and $\theta$. For $\theta$ close enought to $\theta_0$, this branch will look very much like the branch $\chi_{\lambda,\theta_0}$, and thus will satisfy \textit{(i)-(ii)}.

To apply the implicit function theorem we need a fixed space, but $\mathcal H^{ee}_\theta$ depends on $\theta$. Thus we fix $\chi_\theta$ depending smoothly on $\theta$ (for instance take $\chi_\theta$ to be affine), and we will work instead in the space $H_0^1(-1,1)^2$ after having translated by $\chi_\theta$. 

Since we will apply the implicit function theorem near each $\lambda$, but need to obtain for each $\theta$ a whole branch $\lambda\mapsto\chi_{\lambda,\theta}$, we will have to restrict $\lambda$ to a compact interval. That is why we choose $\lambda_1\in(\lambda_c,\lambda_*)$, where $\lambda_c=\lambda_c(\theta_0)$ (defined by \textit{(ii)}) and $\lambda_*=\lambda_*(\theta_0)$.

We consider the smooth function $\mathcal F$ defined by
\begin{equation*}
\mathcal F(\theta,\lambda,\chi) = D E_{\lambda,\theta}^{ee}(\chi_\theta + \chi)\in H^{-1}(-1,1)^2,
\end{equation*}
for $\theta<1$, $\lambda>0$ and $\chi\in H_0^1(-1,1)^2$. For all $\lambda\in (0,\lambda_*)$, it holds
\begin{equation*}
\mathcal F (\theta_0,\lambda,\chi_{\lambda,\theta_0}-\chi_{\theta_0})=0,
\end{equation*}
and, since $\nu(\lambda)>0$, the partial differential
\begin{equation*}
D_\chi \mathcal F (\theta_0,\lambda,\chi_{\lambda,\theta_0}-\chi_{\theta_0}) \text{ is invertible}.
\end{equation*}
Hence the implicit function theorems provides us with $\varepsilon_\lambda >0$ and $\mathfrak A_\lambda$ a neighborhood of $\chi_{\lambda,\theta_0}-\chi_{\theta_0}$ such that the equation
\begin{equation*}
\mathcal F (\theta,\lambda',\chi)=0,\quad |\theta-\theta_0|<\varepsilon_\lambda,\; |\lambda'-\lambda|<\varepsilon_\lambda,\; \chi\in\mathfrak A_\lambda,
\end{equation*}
has a unique solution $\chi_{\lambda',\theta}-\chi_\theta$ depending smoothly on $(\lambda',\theta)$.

Using the compactness of $[\lambda_0/2,\lambda_1]$, we deduce the existence of $\varepsilon>0$ and $\mathfrak A$ a neighborhood of $0$ in $H_0^1(-1,1)^2$, such that the equation
\begin{equation*}
\mathcal F(\theta,\lambda,\chi)=0,\quad |\theta-\theta_0|<\varepsilon,\;\frac{\lambda_0}{2}\leq\lambda\leq\lambda_1,\chi\in \chi_{\lambda,\theta_0}-\chi_{\theta_0}+\mathfrak A,
\end{equation*}
has a unique solution $\chi_{\theta,\lambda}-\chi_\theta$ which depends smoothly on $(\theta,\lambda)$. Hence, for every $\theta\in (\theta_0-\varepsilon,\theta_0+\varepsilon)$, the unique smooth branch of eigenvalue exchange solutions given by Proposition~\ref{chilambda} is $\lambda\mapsto \chi_{\theta,\lambda}$, defined at least up to $\lambda_1$, and it depends smoothly on $\theta$. 

More precisely, we have just proven that $\chi_{\theta,\lambda}$ depends smoothly on $(\theta,\lambda)\in (\theta_0-\varepsilon,\theta_0+\varepsilon)\times (\lambda_0/2,\lambda_1)$, and on the other hand since $\lambda_0$ is choosen in such a way that the unique solution $\chi_{\theta,\lambda}$ is non degenerate for $\lambda<\lambda_0$ (see the proof of Theorem~\ref{unique}), we may apply the implicit function theorem to obtain that $(\theta,\lambda)\mapsto\chi_{\theta,\lambda}$ is smooth also for small $\lambda$. Hence $\chi_{\theta,\lambda}$ depends smoothly on $(\theta,\lambda)\in (\theta_0-\varepsilon,\theta_0+\varepsilon)\times (0,\lambda_1)$.

Recall that, for fixed $\theta$, given a branch of eigenvalue exchange solutions $\chi_\lambda$, we have defined $\mu(\lambda)$ in \eqref{numu}, as the first eigenvalue of the free energy's second variation around $\chi_\lambda$ with respect to symmetry-breaking perturbations. Here we emphasize the dependence on $\theta$ by writing $\mu(\theta,\lambda)$. That is, $\mu(\theta,\lambda)$ is the first eigenvalue of $\mathcal L_{\theta,\lambda}$, which is the linear operator associated to the quadratic form $D^2E_{\theta,\lambda}$ restricted to the space $H_{sb}$ of symmetry-breaking perturbations (see Section~\ref{s_ee}).

Since $(\theta,\lambda)\mapsto \chi_{\theta,\lambda}$ is smooth, we prove, exactly as in Proposition~\ref{chilambda} for $\lambda\mapsto\mu(\lambda)$, that 
\begin{equation*}
(\theta_0-\varepsilon,\theta_0+\varepsilon)\times (0,\lambda_1)\ni(\theta,\lambda)\mapsto\mu(\theta,\lambda)\end{equation*}
is smooth.

In particular, since -- by $(i)$ -- there exists $\lambda_2\in(\lambda_c(\theta_0),\lambda_1)$ such that $\mu(\theta_0,\lambda_2)<0$, it follows that we may chose $\varepsilon$ small enough, so that
\begin{equation*}
\mu(\theta,\lambda_2)<0\quad\forall \theta\in(\theta_0-\varepsilon,\theta_0+\varepsilon),
\end{equation*}
i.e. $(i)$ is satisfied for $\theta$ close enough to $\theta_0$.

By definition of $\lambda_c=\lambda_c(\theta_0)$, and since $\theta_0$ satisfies $(ii)$, it holds
\begin{equation*}
\mu(\theta_0,\lambda_c)=0,\quad\frac{\partial\mu}{\partial\lambda}(\theta_0,\lambda_c)>0.
\end{equation*}
Therefore the implicit function theorem ensures the existence of a smooth map $\lambda(\theta)$ defined  -- up to choosing $\varepsilon$ small enough -- for $\theta\in (\theta_0-\varepsilon,\theta_0+\varepsilon)$, such that
\begin{equation*}
\mu(\theta_0,\lambda(\theta))=0,\quad\frac{\partial\mu}{\partial\lambda}(\theta,\lambda(\theta))>0.
\end{equation*}
In order to complete the proof of Proposition~\ref{stable}, we need to show that this $\lambda(\theta)$ is really the critical value $\lambda_c(\theta)$ that appears in $(ii)$.

That is, we need to prove that (for $\varepsilon$ small enough),
\begin{equation}\label{stable1}
\mu(\theta,\lambda)>0 \quad\text{for }\theta\in (\theta_0-\varepsilon,\theta_0+\varepsilon),\; \lambda\in (0,\lambda(\theta)).
\end{equation}

We start by noting that the choice of $\lambda_0$ in the proof of the uniqueness result Theorem~\ref{unique}  can be such that
\begin{equation}\label{stable2}
\mu(\theta,\lambda)\geq c_0 \quad\forall (\theta,\lambda)\in (\theta_0-\varepsilon,\theta_0+\varepsilon)\times (0,\lambda_0),
\end{equation}\label{stable3}for some $c_0>0$. On the other hand, $\varepsilon$ may be choosen in such a way that it holds
\begin{equation}
\frac{\partial\mu}{\partial\lambda}(\theta,\lambda)>0\quad\text{for }(\theta,\lambda)\in (\theta_0-\varepsilon,\theta_0+\varepsilon)\times (\lambda_c-\delta,\lambda_c+\delta).
\end{equation}
Using the compactness of $[\lambda_0,\lambda_c-\delta]$ and the fact that $\mu(\theta_0,\lambda)>0$ for all $\lambda\in (0,\lambda_c)$, we may also choose $\varepsilon$ such that we have
\begin{equation}\label{stable4}
\mu(\theta,\lambda)>0 \quad\forall (\theta,\lambda)\in (\theta_0-\varepsilon,\theta_0+\varepsilon)\times [\lambda_0,\lambda_c-\delta].
\end{equation}
Putting together \eqref{stable2}, \eqref{stable3} and \eqref{stable4}, we obtain \eqref{stable1}. Therefore, $\lambda(\theta)$ is really the infimum of those $\lambda$ for which $\mu(\theta,\lambda)<0$, and $\theta\in (\theta_0-\varepsilon,\theta_0+\varepsilon)$ satisfies $(ii)$.
\end{proof}

As we pointed out at the beginning of the present section, a corollary of Theorem~\ref{chilambdatheta-8} and Proposition~\ref{stable} is that the bifurcation result Theorem~\ref{bifur} applies to all $\theta$ close enough to the special value $\theta=-8$. 

\begin{cor}
There exists $\varepsilon>0$ such that, for any $\theta<1$ with $|\theta+8|<\varepsilon$, a symmetric pitchfork bifurcation occurs from the branch of eigenvalue exchange solutions starting at small $\lambda$, in the sense that Theorem~\ref{bifur} applies.
\end{cor}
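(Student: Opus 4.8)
The plan is to deduce this statement directly from the three main results established above, namely Theorem~\ref{chilambdatheta-8}, Proposition~\ref{stable}, and Theorem~\ref{bifur}, in that order. First I would verify that the special value $\theta_0=-8<1$ satisfies conditions \textit{(i)}--\textit{(ii)} of Theorem~\ref{bifur}. By Theorem~\ref{chilambdatheta-8}, when $\theta=-8$ we have $\lambda_*=+\infty$, so the admissible interval $(0,\lambda_*)$ is all of $(0,+\infty)$; the same theorem furnishes a critical value $\lambda_c>0$ with $\mu(\lambda)>0$ for $\lambda\in(0,\lambda_c)$, $\mu(\lambda_c)=0$, and $\mu'(\lambda_c)<0$, together with $\lim_{\lambda\to+\infty}\mu(\lambda)<0$. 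The last fact exhibits some $\lambda\in(0,\lambda_*)$ with $\mu(\lambda)<0$, which is exactly condition \textit{(i)}. Moreover, since $\mu>0$ on $(0,\lambda_c)$ while $\mu(\lambda_c)=0$, the infimum $\inf\{\lambda\in(0,\lambda_*):\mu(\lambda)<0\}$ appearing in condition \textit{(ii)} coincides with this $\lambda_c$, at which $\mu'(\lambda_c)<0$; hence \textit{(ii)} holds as well.

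Next I would apply Proposition~\ref{stable} with $\theta_0=-8$: it yields an $\varepsilon>0$ such that every $\theta<1$ with $|\theta+8|<\varepsilon$ also satisfies conditions \textit{(i)}--\textit{(ii)} of Theorem~\ref{bifur}. Finally, for each such $\theta$, Theorem~\ref{bifur} applies verbatim and produces, from the branch $\lambda\mapsto\chi_\lambda$ given by Proposition~\ref{chilambda} starting at small $\lambda$, the announced symmetric pitchfork bifurcation with the stated structure \eqref{bifursol} and symmetry \eqref{bifursym}. This closes the argument.

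The argument is essentially bookkeeping, so there is no substantial obstacle; the one point that deserves genuine care is the identification carried out in the first paragraph, i.e. checking that the critical value $\lambda_c$ produced by Theorem~\ref{chilambdatheta-8} for $\theta=-8$ is indeed the $\lambda_c$ defined by the infimum in condition \textit{(ii)} of Theorem~\ref{bifur}, so that the nondegeneracy $\mu'(\lambda_c)<0$ is being invoked at the correct point of the branch. Everything else follows immediately from the cited results.
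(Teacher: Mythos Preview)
Your proposal is correct and follows exactly the route the paper takes: it verifies that Theorem~\ref{chilambdatheta-8} places $\theta_0=-8$ within the hypotheses \textit{(i)}--\textit{(ii)} of Theorem~\ref{bifur}, invokes Proposition~\ref{stable} to propagate these conditions to nearby $\theta$, and concludes by applying Theorem~\ref{bifur}. The paper itself presents this corollary without a formal proof, merely noting that it follows from the combination of those three results; your careful check that the $\lambda_c$ of Theorem~\ref{chilambdatheta-8} coincides with the infimum in condition \textit{(ii)} is a welcome explicit verification.
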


\appendix

\section{Principle of symmetric criticality}\label{a_sc}

\begin{prop}\label{p_sc}
Let $H$ be a Hilbert space, $G$ a group acting linearly and isometrically on $H$ and $\Sigma=H^G$ the subspace of symmetric elements (that is, $x\in\Sigma$ iff $gx=x$ $\forall g\in G$). Let $f:H\to\mathbb{R}$ be a $G$-invariant $C^1$ function. It holds:
\begin{itemize}
\item[(i)] If $x\in\Sigma$ is a critical point of $f_{|\Sigma}$, then $x$ is a critical point of $f$.
\item[(ii)] If in addition $f$ is $C^2$, it further holds
\begin{equation*}
D^2f(x)\cdot h \cdot k = 0 \quad \text{for }h\in\Sigma,\: k\in \Sigma^\perp,
\end{equation*}
i.e. the orthogonal decomposition $ H=\Sigma\oplus\Sigma^\perp$ is also orthogonal for the bilinear form $D^2f(x)$.
\end{itemize}
\end{prop}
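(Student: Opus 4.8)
The plan is to reduce everything to linear algebra in the Hilbert space $H$ by working with gradients rather than differentials. First I would record two preliminary facts. Since each $g\in G$ acts as a bounded (indeed isometric) linear operator, the subspace $\Sigma=H^G=\bigcap_{g\in G}\ker(g-\mathrm{Id})$ is an intersection of closed subspaces, hence closed, so the orthogonal decomposition $H=\Sigma\oplus\Sigma^\perp$ is legitimate. Next, using the Riesz representation theorem I write $Df(y)\cdot v=\langle\nabla f(y),v\rangle$. Differentiating the invariance identity $f(gy)=f(y)$ in $y$ gives $Df(gy)\circ g=Df(y)$, and since $g$ is isometric (so $g^*=g^{-1}$) this translates into the equivariance relation $\nabla f(gy)=g\,\nabla f(y)$ for all $g\in G$, $y\in H$. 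In particular, if $y\in\Sigma$ then $g\,\nabla f(y)=\nabla f(gy)=\nabla f(y)$ for every $g$, i.e. $\nabla f(y)\in\Sigma$.

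For part (i): if $x\in\Sigma$ is a critical point of $f|_\Sigma$, then $\langle\nabla f(x),h\rangle=0$ for every $h\in\Sigma$, i.e. $\nabla f(x)\in\Sigma^\perp$; on the other hand the preliminary fact at $y=x$ gives $\nabla f(x)\in\Sigma$. Hence $\nabla f(x)\in\Sigma\cap\Sigma^\perp=\{0\}$, so $Df(x)=0$ and $x$ is a critical point of $f$. Only the $C^1$ hypothesis is used here.

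For part (ii): now $f\in C^2$, so $\nabla f$ is $C^1$ and I may differentiate the equivariance relation $\nabla f(gy)=g\,\nabla f(y)$ in $y$. Denoting by $\nabla^2 f(y)\colon H\to H$ the self-adjoint Hessian operator, the chain rule yields $\nabla^2 f(gy)\circ g=g\circ\nabla^2 f(y)$; evaluating at $y=x$, where $gx=x$, gives $\nabla^2 f(x)\,g=g\,\nabla^2 f(x)$ for every $g\in G$. Thus $\nabla^2 f(x)$ commutes with the $G$-action, so it preserves the fixed subspace: if $h\in\Sigma$ then $g\big(\nabla^2 f(x)h\big)=\nabla^2 f(x)(gh)=\nabla^2 f(x)h$ for all $g$, whence $\nabla^2 f(x)h\in\Sigma$. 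Consequently, for $h\in\Sigma$ and $k\in\Sigma^\perp$ we get $D^2 f(x)\cdot h\cdot k=\langle\nabla^2 f(x)h,k\rangle=0$, which is the claimed block-orthogonality of $D^2 f(x)$.

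There is no serious obstacle; the whole argument is a translation of $G$-invariance of $f$ into $G$-equivariance of $\nabla f$, followed by a single differentiation. The only points requiring a little care are the closedness of $\Sigma$ (so that $\Sigma^\perp$ and the projector onto $\Sigma$ make sense) and the adjoint bookkeeping when converting $Df(gy)\circ g=Df(y)$ into $\nabla f(gy)=g\,\nabla f(y)$ — the isometry hypothesis is precisely what makes $g^*=g^{-1}$ and keeps this step clean. One could alternatively phrase (ii) by observing that $\nabla^2 f(x)$, commuting with every $g$, commutes with the orthogonal projector $P_\Sigma$ onto $\Sigma$ and hence leaves both $\Sigma$ and $\Sigma^\perp$ invariant, but the direct computation above is shorter.
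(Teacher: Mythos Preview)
Your proof is correct and follows essentially the same approach as the paper's: differentiate the invariance identity to obtain that $\nabla f$ is $G$-equivariant (hence $\nabla f(x)\in\Sigma$ for $x\in\Sigma$), then differentiate once more to see that $\nabla^2 f(x)$ maps $\Sigma$ into $\Sigma$. Your version is slightly more polished in that you explicitly note the closedness of $\Sigma$ and phrase the second step as a commutation relation $\nabla^2 f(x)\,g=g\,\nabla^2 f(x)$, but the underlying argument is the same.
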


Item \textit{(i)} of the above proposition is only a particularly simple case of Palais' Principle of symmetric criticality \cite{palais79}. Item \textit{(ii)} however does not seem to be explicitly stated in the literature -- as far as we know. Using the same tools as in Section~2 of \cite{palais79}, it is not hard to see that an equivalent of \textit{(ii)} is actually valid if $H$ is replaced by a Riemannian manifold $\mathcal M$ on which the group $G$ acts isometrically. In this case, $\Sigma$ is a submanifold of $\mathcal M$ and, at a symmetric critical point $x$, the orthogonal decomposition
\begin{equation*}
T_x  \mathcal M = T_x \Sigma \oplus (T_x \Sigma)^\perp
\end{equation*}
is also orthogonal for the bilinear form $D^2 f(x)$.

\begin{proof}[Proof of Proposition~\ref{p_sc}]
As already pointed out, item \textit{(i)} is a particular case of \cite[Section 2]{palais79}. We nevertheless present a complete proof  of Proposition~\ref{p_sc} here, since in the simple framework we consider, the proof of \textit{(i)} is really straightforward. 

The fact that $f$ is $G$-invariant means that it holds
\begin{equation}\label{Ginv}
f(g x)=f(x)\quad\forall g\in G,\: x\in H.
\end{equation}
Since the action of $G$ on $H$ is linear, differentiating \eqref{Ginv} we obtain
\begin{equation}\label{DGinv}
Df(gx)\cdot gh = Df(x) \cdot h \quad\forall h\in  H.
\end{equation}
Applying \eqref{DGinv} for a symmetric $x$, i.e. $x\in\Sigma$, we have
\begin{equation*}
< \nabla f(x) , gh > = < \nabla f(x) , h > \quad\forall h\in  H.
\end{equation*}
Since $g$ is a linear isometry, we conclude that
\begin{equation}\label{gradinv}
g^{-1} \nabla f(x) = \nabla f(x) \quad\forall g \in G,\qquad\text{i.e. }\nabla f(x)\in \Sigma.
\end{equation}
Therefore, if we know in addition that $x$ is a critical point of $f_{|\Sigma}$, which means that $\nabla f(x)\in \Sigma^\perp$, it must hold $\nabla f(x)=0$. This proves \textit{(i)}.

Now assume  that $f$ is $C^2$ and differentiate \eqref{DGinv} to obtain
\begin{equation}\label{D2Ginv}
D^2 f(gx) \cdot gh \cdot gk = D^2 f(x) \cdot h \cdot k \quad \forall h,k \in H.
\end{equation}
In particular, if $x$ and $h$ are symmetric (i.e. belong to $\Sigma$), and if we denote by $\nabla^2 f(x)$ the Hessian of $f$ at $x$, \eqref{D2Ginv} becomes
\begin{equation*}
<g^{-1}(\nabla^2 f(x)h),k>=<\nabla^2 f(x) h,k>\quad\forall x\in\Sigma,\: h\in \Sigma,\: k\in  H,
\end{equation*}
so that $\nabla^2 f(x) h$ is symmetric. Hence it is orthogonal to any $k\in\Sigma^\perp$, which proves \textit{(ii)}.
\end{proof}

\section{Uniqueness of critical points for small $\lambda$}\label{a_unique}

Let $\Omega\subset\mathbb{R}^N$ be a smooth bounded domain, and $f:\mathbb{R}^d\to\mathbb{R}$ a $W^{2,\infty}_{loc}$ map.
We are interested in critical points of functionals of the form
\begin{equation}\label{Elambda}
E_\lambda (u) =\int_\Omega \frac{1}{2\lambda^2}|\nabla u|^2 + \int_\Omega f(u),
\end{equation}
i.e. solutions $u\in H^1(\Omega)^d$ of the equation
\begin{equation}\label{eulerlagrange}
\Delta u  = \lambda^2\nabla f(u) \quad \text{ in }\mathcal{D}'(\Omega).
\end{equation}
Note that \eqref{eulerlagrange} implies in particular that $\nabla f(u) \in L^1_{loc}$.

We prove the following:
\begin{thm}\label{unique}
Assume that there exists $C>0$ such that $\nabla f(x)\cdot x \geq 0$ for any $x\in\mathbb R^d$ with $|x|\geq C$.

Let $g\in L^\infty\cap H^{1/2}(\partial\Omega)^d$. There exists $\lambda_0=\lambda_0(\Omega,f,g)$ such that, for any $\lambda\in (0,\lambda_0)$, $E_\lambda$ admits at most one critical point with $\mathrm{tr}\: u=g$ on $\partial\Omega$.
\end{thm}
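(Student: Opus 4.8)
The plan is to exploit the fact that, for small $\lambda$, the Dirichlet term $\tfrac1{2\lambda^2}\int_\Omega|\nabla u|^2$ completely dominates the bulk term $\int_\Omega f(u)$, so that $E_\lambda$ is strictly convex along the segment joining any two critical points \emph{once those critical points are known to stay in a fixed bounded region}. Accordingly I would split the proof into two parts: (a) an a priori $L^\infty$ bound on critical points, depending only on $\Omega$, $f$ and $g$ but not on $\lambda$; and (b) an energy / contraction estimate on the difference of two critical points. Part (b) is short and standard; part (a) is where the coercivity hypothesis $\nabla f(x)\cdot x\ge 0$ for $|x|\ge C$ does the work, and is the delicate one.

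For step (a), the sign condition is exactly what drives a maximum principle for $|u|^2$. Formally, differentiating \eqref{eulerlagrange},
\begin{equation*}
\Delta\Big(\tfrac12|u|^2\Big)=|\nabla u|^2+\lambda^2\,\nabla f(u)\cdot u,
\end{equation*}
whose right-hand side is nonnegative on the open set $\{|u|>C\}$; hence $|u|^2$ is subharmonic there, and comparing with its boundary values — which are controlled by $\max(C^2,\|g\|_{L^\infty}^2)$, on $\partial\Omega$ through $g$ and on the free boundary $\{|u|=C\}$ by definition — forces $\|u\|_{L^\infty(\Omega)}\le R:=\max(C,\|g\|_{L^\infty})$. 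To make this rigorous for a merely weak $H^1$ critical point I would instead test the system with $\phi_k=u_k\,\psi_M$, where $\psi_M=\min\big((|u|^2-R^2)_+,M\big)$, integrate by parts, use that $\nabla f(u)\cdot u\ge0$ on $\{\psi_M>0\}$, and let $M\to+\infty$; the boundary trace, only assumed in $H^{1/2}\cap L^\infty$, is handled automatically since $(|u|^2-R^2)_+$ vanishes on $\partial\Omega$ in the trace sense. The point requiring genuine care — and which I expect to be the main obstacle — is that this argument presupposes enough regularity/integrability of $u$ (at least $u\in L^\infty_{\mathrm{loc}}$, hence $\nabla f(u)\in L^\infty_{\mathrm{loc}}$ and then $u\in W^{2,p}_{\mathrm{loc}}\cap C^0(\overline\Omega)$ by elliptic regularity), given that $f$ is only $W^{2,\infty}_{\mathrm{loc}}$ with \emph{no} prescribed growth; establishing this preliminary local boundedness looks like it needs a Moser / De Giorgi iteration in which the sign condition is again the crucial ingredient.

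For step (b), let $u_1,u_2$ be critical points with $\mathrm{tr}\,u_1=\mathrm{tr}\,u_2=g$ and set $w=u_1-u_2\in H_0^1(\Omega)^d$. Subtracting the weak forms of \eqref{eulerlagrange} and testing with $w$ gives
\begin{equation*}
\int_\Omega|\nabla w|^2=-\lambda^2\int_\Omega\big(\nabla f(u_1)-\nabla f(u_2)\big)\cdot w .
\end{equation*}
By step (a), both $u_i$ take values in $\overline{B_R}$, on which $\nabla f$ is Lipschitz with constant $L:=\|f\|_{W^{2,\infty}(B_R)}$, so $|\nabla f(u_1)-\nabla f(u_2)|\le L|w|$; combining this with Poincaré's inequality $\int_\Omega|w|^2\le C_P(\Omega)\int_\Omega|\nabla w|^2$ on $H_0^1$ yields
\begin{equation*}
\int_\Omega|\nabla w|^2\le \lambda^2\,L\,C_P(\Omega)\int_\Omega|\nabla w|^2 .
\end{equation*}
Choosing $\lambda_0=\lambda_0(\Omega,f,g)$ so that $\lambda_0^2\,L\,C_P(\Omega)<1$ (note $L$ depends on $f,g$ only through $R$, and the admissibility of $w$ as a test function follows from $\nabla f(u_i)\in L^\infty$ together with density of $C_c^\infty$ in $H_0^1$), for every $\lambda\in(0,\lambda_0)$ the estimate forces $\nabla w\equiv0$, hence $w\equiv0$ by Poincaré, i.e. $u_1=u_2$. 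Thus, modulo the a priori bound of step (a), the uniqueness is essentially a Poincaré-and-Lipschitz contraction argument, and $\lambda_0$ is an explicit function of the Poincaré constant of $\Omega$ and of $\|f\|_{W^{2,\infty}(B_R)}$.
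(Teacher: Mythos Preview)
Your two-step strategy --- an a priori $L^\infty$ bound from the sign condition, followed by a contraction/convexity estimate for small $\lambda$ --- is exactly the paper's decomposition into Lemmas~\ref{unique_l2} and~\ref{unique_l1}. Step~(b) is a harmless variant: the paper shows strict midpoint convexity of $E_\lambda$ on $\{|u|\le R\}$ via the parallelogram identity and Poincar\'e rather than subtracting the two equations and testing with $w$, but the computations are equivalent and yield the same $\lambda_0$ up to constants.

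Where the paper differs is precisely at the gap you flag in step~(a). The regularity issue --- that $\nabla f(u)$ is only known to lie in $L^1_{\mathrm{loc}}$ while the natural test function $u\psi_M$ (or the paper's $\varphi(|u|^2)\,u$) is unbounded --- is \emph{not} handled by first bootstrapping $u\in L^\infty_{\mathrm{loc}}$ via a Moser/De~Giorgi iteration. Instead the paper proves a tailored approximation lemma (Lemma~\ref{unique_l3}, due to P.~Bousquet): if $u\in H^1(\Omega)^d$ with $\Delta u=g\in L^1_{\mathrm{loc}}$, then for any $\zeta\in H^1_0(\Omega)^d$ the pointwise inequality $\zeta\cdot g\ge 0$ a.e.\ already forces $\int\nabla\zeta\cdot\nabla u\le 0$. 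The proof truncates $\zeta$, not $u$: first reduce to bounded compactly supported $\zeta$ (dominated convergence on a fixed compact), then to bounded $\zeta\in H^1_0$ (smooth cut-offs plus a Hardy inequality to control the boundary-layer error), and finally to general $\zeta\in H^1_0$ by radial projection $\zeta\mapsto P_k(\zeta)$ onto balls of radius $k$, which preserves the sign of $\zeta\cdot g$. Applied with $\zeta=\varphi(|u|^2)\,u$ this gives $\int\varphi(|u|^2)|\nabla u|^2\le 0$ directly, with no preliminary regularity for $u$ required. This is both shorter and more robust than a De~Giorgi scheme, which for a system with no growth control on $\nabla f$ would itself need justification.
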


Theorem~\ref{unique} is a direct consequence of Lemmas~\ref{unique_l1} and \ref{unique_l2} below. Indeed, Lemma~\ref{unique_l1} ensures that, for sufficiently small $\lambda$, $E_\lambda$ admits at most one critical point satisfying a given $L^\infty$ bound (independent of $\lambda$). And in Lemma~\ref{unique_l2} we prove that the assumption on $f$ implies such a bound for critical points of $E_\lambda$.

\begin{lem}\label{unique_l1}
Let $C>0$. There exists $\lambda_0=\lambda_0(C,f,\Omega)$ such that, for any $\lambda\in (0,\lambda_0)$ and any $g\in H^{1/2}(\partial\Omega)^d$, $E_\lambda$ admits at most one critical point $u$ satisfying $|u|\leq C$ p.p. and $\mathrm{tr}\: u = g$.
\end{lem}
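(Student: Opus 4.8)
The plan is to use a standard energy/contraction argument exploiting the smallness of $\lambda$ together with Poincar\'e's inequality. Suppose $u_1,u_2$ are two critical points of $E_\lambda$ with $|u_i|\le C$ a.e.\ and $\mathrm{tr}\,u_1 = \mathrm{tr}\,u_2 = g$. Since the traces coincide, $w := u_1 - u_2 \in H_0^1(\Omega)^d$. First I would write the weak form of \eqref{eulerlagrange} for each $u_i$ and subtract, obtaining
\[
\int_\Omega \nabla w\cdot\nabla\varphi \,dx = -\lambda^2\int_\Omega \bigl(\nabla f(u_1)-\nabla f(u_2)\bigr)\cdot\varphi\,dx
\qquad\text{for all }\varphi\in H_0^1(\Omega)^d .
\]
The right-hand side is well defined because the a priori bound $|u_i|\le C$ and $f\in W^{2,\infty}_{loc}$ force $\nabla f(u_i)\in L^\infty(\Omega)^d$.

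Next I would test this identity with $\varphi = w$ --- legitimate since $w\in H_0^1$ --- to get $\int_\Omega|\nabla w|^2 = -\lambda^2\int_\Omega(\nabla f(u_1)-\nabla f(u_2))\cdot(u_1-u_2)\,dx$. Because $\nabla f$ is Lipschitz on the convex ball $\overline{B_C}\subset\mathbb R^d$ with constant $L := \|D^2 f\|_{L^\infty(\overline{B_C})}$, and $u_1(x),u_2(x)\in\overline{B_C}$ a.e., the integrand is bounded pointwise by $L|w|^2$, so
\[
\int_\Omega|\nabla w|^2\,dx \le \lambda^2 L\int_\Omega|w|^2\,dx .
\]
Applying Poincar\'e's inequality $\|v\|_{L^2}^2\le C_P\|\nabla v\|_{L^2}^2$ on $H_0^1(\Omega)$, with $C_P = C_P(\Omega)$, then yields $\int_\Omega|\nabla w|^2\,dx\le \lambda^2 L\,C_P\int_\Omega|\nabla w|^2\,dx$.

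Finally, I would set $\lambda_0 := (L\,C_P)^{-1/2}$; this depends only on $C$ (through $L$), on $f$ (through $\|D^2 f\|_{L^\infty(\overline{B_C})}$) and on $\Omega$ (through $C_P$), and in particular not on $g$, as required by the statement. For $\lambda\in(0,\lambda_0)$ the coefficient $\lambda^2 L\,C_P$ is $<1$, which forces $\int_\Omega|\nabla w|^2 = 0$, hence $\nabla w\equiv 0$ and then $w\equiv 0$ since $w\in H_0^1(\Omega)^d$, i.e.\ $u_1 = u_2$. I do not anticipate a genuine obstacle here: the only points requiring care are that $w$ is an admissible test function (immediate from $\mathrm{tr}\,w=0$) and that $\nabla f(u_i)$ pairs integrably with $w$ while satisfying the needed Lipschitz bound (both immediate from the uniform $L^\infty$ bound $|u_i|\le C$ and $f\in W^{2,\infty}_{loc}$); the rest is the routine Poincar\'e-plus-Lipschitz smallness estimate. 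The same computation, incidentally, shows that $D^2E_\lambda(u)$ is definite positive for $\lambda<\lambda_0$, which is the property invoked later in the proof of Proposition~\ref{chilambda}.
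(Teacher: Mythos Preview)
Your argument is correct. It is a close cousin of the paper's proof, relying on the same two ingredients: the Lipschitz bound on $\nabla f$ coming from $f\in W^{2,\infty}_{loc}$ together with the a priori bound $|u_i|\le C$, and Poincar\'e's inequality on $H_0^1(\Omega)$ applied to the difference. The packaging is different, though: the paper establishes that $E_\lambda$ is strictly (midpoint) convex on the set $X=\{u\in H^1:|u|\le C\text{ a.e.}\}$ by expanding $E_\lambda\bigl(\tfrac{u+v}{2}\bigr)$ and using the parallelogram identity for the gradient term, then deduces uniqueness from the fact that $t\mapsto E_\lambda(tu_1+(1-t)u_2)$ cannot have two critical points. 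Your route is the equation-level version of the same computation: subtract the Euler--Lagrange equations and test with the difference. Your version is slightly shorter and gives exactly what is needed; the paper's version yields a marginally stronger intermediate statement (strict convexity of the functional on $X$), which is what is implicitly invoked later when Proposition~\ref{chilambda} speaks of $D^2E_\lambda(\chi_\lambda)$ being definite positive --- and you correctly note that your inequality delivers this as well.
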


\begin{proof}
Let
\begin{equation*}
X:=\left\lbrace u\in H^1(\Omega) : \; |u|\leq C \text{ p.p.} \right\rbrace.
\end{equation*}
We show that, for $\lambda$ small enough, $E_\lambda$ is strictly convex on $X$.

Let $u,v\in X$. Then $u-v\in H^1_0(\Omega)^d$. Using Poincar\'e's inequality, we obtain
\begin{equation}\label{convex1}
\begin{split}
E_\lambda \left( \frac{u+v}{2}\right) & 
= \frac{1}{8\lambda^2}\int |\nabla u + \nabla v|^2 + \int f\left(\frac{u+v}{2}\right) \\
& = \frac{1}{4\lambda^2}\int |\nabla u|^2 + \frac{1}{4\lambda^2}\int |\nabla v|^2 -\frac{1}{8\lambda^2}\int |\nabla (u-v)|^2 +  \int f\left(\frac{u+v}{2}\right) \\
& = \frac{1}{2} E_\lambda(u) + \frac{1}{2} E_\lambda (v) -\frac{1}{8\lambda^2}\int |\nabla (u-v)|^2 \\
&\quad + \int\left[ f\left(\frac{u+v}{2}\right)-\frac{1}{2}f(u)-\frac{1}{2}f(v)\right] \\
& \leq \frac{1}{2} E_\lambda(u) + \frac{1}{2} E_\lambda (v) - \frac{c_1(\Omega)}{\lambda^2}\|u-v\|_{L^2}^2 \\
& \quad 
+ \int\left[ f\left(\frac{u+v}{2}\right)-\frac{1}{2}f(u)-\frac{1}{2}f(v)\right] .
\end{split}
\end{equation}
On the other hand, for any $x,y\in\mathbb{R}^d$ satisfying $|x|,|y|\leq C$, it holds
\begin{equation}\label{convex2}
f\left(\frac{x+y}{2}  \right) -\frac{1}{2}f(x)-\frac{1}{2}f(y) 
\leq \|f\|_{W^{2,\infty}(B_C)}|x-y|^2.
\end{equation}
Plugging \eqref{convex2} into \eqref{convex1} we obtain, for some $c_2=c_2(\Omega,f,C)>0$,
\begin{equation*}
\begin{split}
E_\lambda \left( \frac{u+v}{2}\right) & 
\leq \frac{1}{2} E_\lambda(u) + \frac{1}{2} E_\lambda (v) - \frac{c_1}{\lambda^2}\|u-v\|_{L^2}^2
+ c_2 \|u-v\|_{L^2}^2 \\
& = \frac{1}{2} E_\lambda(u) + \frac{1}{2} E_\lambda (v) - \frac{c_1}{2\lambda^2}\|u-v\|_{L^2}^2
-c_2(\frac{c_1}{2c_2\lambda^2}-1) \|u-v\|_{L^2}^2.
\end{split}
\end{equation*}
Hence, for $\lambda\leq \lambda_0 := \sqrt{c_1/(2c_2)}$, it holds
\begin{equation*}
E_\lambda \left( \frac{u+v}{2}\right) < \frac{1}{2} E_\lambda(u) + \frac{1}{2} E_\lambda (v) \quad\forall u,v\in X,\; u\neq v.
\end{equation*}
Thus $E_\lambda$ is strictly convex on $X$.

To conclude the proof, assume that for a $\lambda\in (0,\lambda_0)$, there exist two solutions $u_1$ and $u_2$ of \eqref{eulerlagrange}, belonging to $X$. Then one easily shows that $[0,1]\ni t\mapsto E_\lambda(tu_1 + (1-t)u_2)$ is $C^1$ and that its derivative vanishes at 0 and 1, which is incompatible with the strict convexity of $E_\lambda$.
\end{proof}

\begin{lem}\label{unique_l2}
Assume that there exists $C>0$ such that 
\begin{equation*}
|x|\geq C \quad \Rightarrow\quad\nabla f(x)\cdot x \geq 0.
\end{equation*} 
Let $g\in L^\infty\cap H^{1/2}(\partial\Omega)^d$. If $u\in H^1_g(\Omega)^d$ is a critical point of $E_\lambda$, then it holds
\begin{equation*}
|u|\leq \max (C,\|g\|_\infty ) \quad\text{a.e.}
\end{equation*}
\end{lem}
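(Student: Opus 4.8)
The plan is to establish the $L^\infty$ bound by a Stampacchia-type truncation argument, exploiting the structural sign condition on $\nabla f$ to kill the contribution of the nonlinearity on the set where $u$ is large. Set $M:=\max(C,\|g\|_\infty)$; the goal is to show that $w:=(|u|-M)_+$ vanishes a.e. For $k>0$ let $G_k\colon[0,\infty)\to[0,\infty)$ be the Lipschitz function with $G_k\equiv 0$ on $[0,M^2]$, $G_k(s)=s-M^2$ on $[M^2,M^2+k]$, and $G_k\equiv k$ on $[M^2+k,\infty)$, and consider as a test function
\begin{equation*}
v_k := G_k(|u|^2)\,u .
\end{equation*}
Since $G_k$ and $G_k'=\mathbf 1_{(M^2,M^2+k)}$ are bounded, and $G_k'(|u|^2)\neq 0$ only where $|u|^2<M^2+k$ (so that $u$ is bounded on the support of $G_k'(|u|^2)$), one checks that $v_k\in H^1(\Omega)^d$; and since $|g|\le M$ forces $G_k(|g|^2)=0$, it has zero trace, i.e. $v_k\in H^1_0(\Omega)^d$.

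The core of the proof is the following computation. Using the identity $\sum_i u_i\,\partial_\alpha u_i=\tfrac12\,\partial_\alpha(|u|^2)$, one finds
\begin{equation*}
\int_\Omega \nabla u:\nabla v_k \;=\; \int_\Omega G_k(|u|^2)\,|\nabla u|^2 \,dx \;+\; \frac12\int_\Omega G_k'(|u|^2)\,\big|\nabla(|u|^2)\big|^2\,dx \;\ge\; 0,
\end{equation*}
both integrands being nonnegative. On the other hand, testing \eqref{eulerlagrange} against $v_k$ yields $\int_\Omega\nabla u:\nabla v_k=-\lambda^2\int_\Omega G_k(|u|^2)\,\big(\nabla f(u)\cdot u\big)\,dx$, and since $G_k(|u|^2)>0$ only on $\{|u|>M\}\subset\{|u|\ge C\}$, the hypothesis $\nabla f(x)\cdot x\ge 0$ for $|x|\ge C$ makes this integral $\le 0$. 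Hence both sides vanish; in particular $G_k(|u|^2)\,|\nabla u|^2=0$ a.e., that is, $\nabla u=0$ a.e. on $\{|u|>M\}$ (for every $k$, hence on the whole of this set).

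To conclude, the chain rule for $t\mapsto(t-M)_+$ and the formula $\nabla|u|=|u|^{-1}(u\cdot\nabla u)$ on $\{u\neq 0\}$ give $\nabla w=\mathbf 1_{\{|u|>M\}}\,|u|^{-1}(u\cdot\nabla u)=0$ a.e. in $\Omega$; since moreover $0\le w\le|u|\in L^2(\Omega)$, we get $w\in H^1(\Omega)$, and $\operatorname{tr}w=(|g|-M)_+=0$, so $w\in H^1_0(\Omega)$ has vanishing gradient, hence $w\equiv 0$, which is precisely $|u|\le M$ a.e. The step requiring the most care -- and the main obstacle -- is the admissibility of $v_k$ as a test function in \eqref{eulerlagrange}: since $f$ is only $W^{2,\infty}_{loc}$ with no growth assumption, $\nabla f(u)\cdot v_k$ need not be globally integrable merely from $v_k\in H^1_0$, so this must be justified either by first upgrading the regularity of the critical point $u$ to $L^\infty_{loc}$ (after which the standard density argument applies), or by inserting an additional truncation of $v_k$ on the region where $|u|$ is large and then passing to the limit.
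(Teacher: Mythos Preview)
Your approach is essentially the same as the paper's: test the equation with $\varphi(|u|^2)\,u$ for a suitable nonnegative, nondecreasing $\varphi$ vanishing on $[0,M^2]$, use the sign condition on $\nabla f(u)\cdot u$ to get $\int\nabla u:\nabla v\le 0$, expand to find $\int\nabla u:\nabla v\ge 0$, and conclude $\nabla u=0$ a.e.\ on $\{|u|>M\}$. Your final step passing from this to $|u|\le M$ via $(|u|-M)_+\in H^1_0$ with zero gradient is in fact more explicit than the paper's one-line conclusion.

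The gap you flag is real and is precisely the technical heart of the matter; the paper does not bypass it but isolates it into a separate lemma (Lemma~\ref{unique_l3}, attributed to P.~Bousquet): if $u\in H^1(\Omega)^d$ with $\Delta u=g\in L^1_{loc}$ and $\zeta\in H^1_0(\Omega)^d$ satisfies $\zeta\cdot g\ge 0$ a.e., then $\int\nabla\zeta\cdot\nabla u\le 0$. Note that this yields only an inequality, not the identity $\int\nabla u:\nabla v_k=-\lambda^2\int G_k(|u|^2)(\nabla f(u)\cdot u)$ you wrote, which would indeed require the right-hand side to be defined. The proof of that lemma is a three-step approximation: first $\zeta\in H^1\cap L^\infty$ with compact support (dominated convergence on a fixed compact); then $\zeta\in H^1_0\cap L^\infty$ by multiplying with cutoffs $\theta_k$ and controlling the commutator term $\int(\nabla\theta_k\cdot\nabla u)\cdot\zeta$ via the Hardy inequality $\int |\zeta|^2/d(x,\partial\Omega)^2\le C\int|\nabla\zeta|^2$; finally general $\zeta\in H^1_0$ by radial truncation $P_k(\zeta)$.

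Your second proposed fix (``insert an additional truncation on the region where $|u|$ is large'') corresponds to the third step above, but that step presupposes the $H^1_0\cap L^\infty$ case, whose passage from compact support requires the Hardy-type estimate---this is the ingredient your sketch is missing. Your first proposed fix (upgrade $u$ to $L^\infty_{loc}$) does not by itself close the gap: even with $\nabla f(u)\in L^\infty_{loc}$ you still need to test with a $v_k$ that is neither bounded nor compactly supported, so the same boundary-layer issue remains.
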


\begin{proof}
We may assume $C = \max( C, \|g\|_\infty) > 0$.

Let $\varphi\in C^\infty (\mathbb R)$ be such that:
\begin{equation}\label{aux}
\left\lbrace
\begin{split}
&\varphi  \geq 0,\\
&\varphi'  \geq 0,\\
&\varphi(t)  = 0\quad\text{for }t\leq C^2,\\
&\varphi(t) = 1\quad\text{for }t\geq T,\text{ for some }T> C^2.
\end{split}
\right.
\end{equation}

Let $w=\varphi(|u|^2)$. The assumptions on $\varphi$ ensure that $w\geq 0$, and  $w=0$ in $\lbrace |u|\leq C\rbrace$.

Therefore, taking the scalar product of \eqref{eulerlagrange} with $wu$ and using the assumption that $\nabla f(u)\cdot u \geq 0$ outside of $\lbrace |u|\leq C \rbrace$, we obtain
\begin{equation}\label{ineq}
\frac{1}{\lambda^2} w u\cdot\Delta u = w \nabla f(u)\cdot u \geq 0\quad\text{a.e.}
\end{equation}

Since $wu\in H_0^1(\Omega)^d$, we may apply Lemma~\ref{unique_l3} below, to deduce
\begin{equation}\label{ineq2}
\int_\Omega \nabla u \cdot \nabla(wu) \leq 0.
\end{equation}

On the other hand, it holds
\begin{equation*}
\int_\Omega \nabla u \cdot \nabla(wu) = \int_\Omega w |\nabla u|^2 + \int_\Omega 2 \sum_k(u\cdot\partial_k u)^2 \varphi'(|u|^2),
\end{equation*}
so that we have in fact
\begin{equation}\label{ineq3}
\int_\Omega w |\nabla u|^2\leq 0.
\end{equation}
Finally we may choose an increasing sequence $\varphi_k$ of smooth maps satisfying \eqref{aux} and converging to $\mathbf{1}_{t >  C^2}$. Then $w_k=\varphi_k(|u|^2)$ is increasing and converges a.e. to $\mathbf{1}_{|u| > C}$, and we conclude that
\begin{equation*}
\int_{|u| > C} |\nabla u|^2 = 0,
\end{equation*}
so that $|u|\leq C$ a.e.
\end{proof}

The following result, which we used in the proof of Lemma~\ref{unique_l2}, is due to Pierre Bousquet.

\begin{lem}\label{unique_l3}
Let $u\in H^1(\Omega)^d$ and assume that $\Delta u = g \in L^1_{loc}(\Omega)^d$. Then, for any $\zeta \in H_0^1(\Omega)^d$,
\begin{equation}\label{l3_implic}
\zeta \cdot g \geq 0 \; \text{a.e. } \Longrightarrow \; \int \nabla\zeta \cdot \nabla u \leq 0.
\end{equation}
\end{lem}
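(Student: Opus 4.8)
The plan is to establish the integration-by-parts identity $\int_\Omega\nabla\zeta\cdot\nabla u=-\int_\Omega\zeta\cdot g$, from which the statement is immediate, since $\zeta\cdot g\ge 0$ a.e.\ makes the right-hand side $\le 0$. Two points make this delicate: $g=\Delta u$ lies only in $L^1_{\mathrm{loc}}(\Omega)$, so $\zeta\cdot g$ need not even be integrable; and, more seriously, the density of $C_c^\infty(\Omega)$ in $H_0^1(\Omega)$ is of no help here, because approximating $\zeta$ in $H_0^1$ by smooth functions destroys the sign condition $\zeta\cdot g\ge 0$. The only operations that preserve that inequality are multiplication by a \emph{nonnegative} scalar function and truncation of $\zeta$ toward the origin in $\mathbb R^d$, and the whole argument is organised around these.

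First I would reduce to $\zeta\in H_0^1(\Omega)^d\cap L^\infty$. If $P_n\colon\mathbb R^d\to\overline{B_n}$ denotes the ($1$-Lipschitz) nearest-point projection, then $\zeta^{(n)}:=P_n\circ\zeta$ lies in $H_0^1(\Omega)^d$ (composition of an $H_0^1$ map with a Lipschitz map vanishing at $0$), is bounded by $n$, and converges to $\zeta$ in $H_0^1(\Omega)^d$; moreover $\zeta^{(n)}=\lambda_n\zeta$ with $0\le\lambda_n\le 1$ measurable, so $g\cdot\zeta^{(n)}=\lambda_n\,(g\cdot\zeta)\ge 0$ a.e. Since $\nabla\zeta^{(n)}\to\nabla\zeta$ in $L^2$, it suffices to prove $\int\nabla u\cdot\nabla\zeta^{(n)}\le 0$ for each $n$; so from now on I may assume $|\zeta|\le M$ for a constant $M$.

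Next, for small $\epsilon>0$ fix a cutoff $\eta_\epsilon\in C_c^\infty(\Omega)$ with $0\le\eta_\epsilon\le 1$, $\eta_\epsilon\equiv 1$ on $\{d(\cdot,\partial\Omega)>2\epsilon\}$, $\eta_\epsilon\equiv 0$ on $\{d(\cdot,\partial\Omega)<\epsilon\}$, and $|\nabla\eta_\epsilon|\le C/\epsilon$. Then $\eta_\epsilon\zeta\in H^1(\Omega)^d\cap L^\infty$ has compact support in $\Omega$, and its mollifications form a sequence in $C_c^\infty(\Omega)^d$, bounded in $L^\infty$ by $M$ with supports in a fixed compact $K\subset\Omega$, converging to $\eta_\epsilon\zeta$ in $H^1$. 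Feeding them into $\int_\Omega\nabla u\cdot\nabla\varphi=-\int_\Omega g\cdot\varphi$ (valid for $\varphi\in C_c^\infty(\Omega)^d$ because $\Delta u=g\in L^1_{\mathrm{loc}}$) and passing to the limit — dominated convergence on the right with dominating function $M\,|g|\,\mathbf{1}_K\in L^1$ — gives
\begin{equation*}
\int_\Omega\nabla u\cdot\nabla(\eta_\epsilon\zeta)=-\int_\Omega\eta_\epsilon\,(g\cdot\zeta)\le 0 ,
\end{equation*}
the inequality because $\eta_\epsilon\ge 0$ and $g\cdot\zeta\ge 0$ a.e.

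Finally I would show that $\eta_\epsilon\zeta\to\zeta$ in $H_0^1(\Omega)^d$ as $\epsilon\to 0$, so that passing to the limit in the last inequality yields $\int_\Omega\nabla u\cdot\nabla\zeta\le 0$, which is \eqref{l3_implic}. Since $\nabla(\eta_\epsilon\zeta^i)=\eta_\epsilon\nabla\zeta^i+\zeta^i\nabla\eta_\epsilon$, the convergences $\eta_\epsilon\zeta\to\zeta$ and $\eta_\epsilon\nabla\zeta\to\nabla\zeta$ in $L^2$ are immediate from dominated convergence; the only genuine point — and the main obstacle of the whole proof — is that $\zeta\nabla\eta_\epsilon\to 0$ in $L^2$. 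Here a bound on $|\zeta|$ is useless, since with $A_\epsilon:=\{\epsilon<d(\cdot,\partial\Omega)<2\epsilon\}$ one only gets $\|\zeta\nabla\eta_\epsilon\|_{L^2}^2\le (C/\epsilon)^2\!\int_{A_\epsilon}|\zeta|^2$ while $|A_\epsilon|\sim\epsilon$. What saves the day is the vanishing trace of $\zeta$, through Hardy's inequality ($\Omega$ being smooth): $\zeta/d(\cdot,\partial\Omega)\in L^2(\Omega)$, and since $d(\cdot,\partial\Omega)\le 2\epsilon$ on $A_\epsilon$,
\begin{equation*}
\|\zeta\nabla\eta_\epsilon\|_{L^2}^2\le 4C^2\int_{A_\epsilon}\frac{|\zeta|^2}{d(\cdot,\partial\Omega)^2}\xrightarrow[\ \epsilon\to 0\ ]{}0
\end{equation*}
by dominated convergence. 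Apart from this estimate, everything is the routine bookkeeping needed to legitimise the formal integration by parts despite $g$ being merely locally integrable.
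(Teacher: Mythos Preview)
Your proof is correct and follows essentially the same three-stage strategy as the paper: reduce to bounded $\zeta$ via radial truncation $P_n$, then to compactly supported bounded $\zeta$ via a nonnegative cutoff, then pass to the limit using mollification and dominated convergence, with Hardy's inequality controlling the boundary-layer term $\zeta\nabla\eta_\epsilon$. The only cosmetic differences are the order of the reductions (you truncate first, the paper truncates last) and the cutoff estimate (you use $|\nabla\eta_\epsilon|\le C/\epsilon$ and convert via $d\le 2\epsilon$ on $A_\epsilon$, whereas the paper builds cutoffs with $|\nabla\theta_k|\le c/d(x,\partial\Omega)$ directly); both lead to the same Hardy argument.
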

\begin{proof}
We proceed in three steps: first we show that \eqref{l3_implic} is valid for $\zeta\in H^1\cap L^\infty(\Omega)^d$ with compact support in $\Omega$, then for $\zeta\in H^1_0 \cap L^\infty(\Omega)^d$, and eventually for $\zeta\in H^1_0(\Omega)^d$.

\textit{Step 1:} $\zeta\in H^1_c\cap L^\infty$.

Since $\zeta$ is bounded and compactly supported, there exists a sequence $\zeta_k$ of $C_c^\infty$ functions, a constant $C>0$, and a compact $K\subset \Omega$, such that
\begin{equation*}
\mathrm{supp}\:\zeta_k \subset K,\quad \| \zeta_k \|_\infty \leq C,\;\text{and }\zeta_k \longrightarrow \zeta\text{ in }H^1\text{ and a.e.}.
\end{equation*}
Since $\zeta_k\in C^\infty_c(\Omega)^d$, it holds, by definition of the weak laplacian,
\begin{equation*}
\int \zeta_k\cdot g  = - \int \nabla\zeta_k\cdot\nabla u,
\end{equation*}
and we may pass to the limit (using dominated convergence on the compact $K$ for the left hand side) to obtain
\begin{equation*}
\int_\Omega \zeta \cdot g = - \int \nabla\zeta \cdot\nabla u,
\end{equation*}
which implies \eqref{l3_implic}.

\textit{Step 2:} $\zeta\in H^1_0\cap L^\infty$.

Let $\theta_k\in C_c^\infty(\Omega)$ be such that
\begin{equation*}
0\leq\theta_k \leq 1,\; \theta_k(x)=1\text{ if }d(x,\partial\Omega)>\frac{1}{k},\;\text{and }|\nabla\theta_k (x) | \leq \frac{c}{d(x,\partial\Omega)},
\end{equation*}
and define $\zeta_k = \theta_k \zeta \in H^1_c \cap L^\infty(\Omega)^d$.

Assuming that $\zeta\cdot g \geq 0$ a.e., we deduce that $\zeta_k\cdot g \geq 0$ a.e., and thus we may apply \textit{Step 1} to $\zeta_k$: it holds
\begin{equation}\label{l3_step2}
0\geq \int \nabla \zeta_k \cdot \nabla u = \int \theta_k \nabla \zeta\cdot\nabla u + \int \nabla \theta_k \cdot \nabla u \cdot \zeta.
\end{equation}
The first term in the right hand side of \eqref{l3_step2} converges to $\int\nabla\zeta\cdot\nabla u$, by dominated convergence. Therefore we only need to prove that the second term in the right hand side of \eqref{l3_step2} converges to zero. To this end we use the following Hardy-type inequality:
\begin{equation}\label{hardy}
\int \frac{|\zeta |^2}{d(x,\partial\Omega)^2} \leq C \int |\nabla\zeta |^2,\qquad\forall\zeta\in H_0^1(\Omega).
\end{equation}
Using \eqref{hardy} and the H\"older inequality, we obtain
\begin{equation*}
\left| \int \nabla \theta_k \cdot \nabla u \cdot \zeta \right|^2 \leq C \| \nabla \zeta\|^2_{L^2} \int_{d(x,\partial\Omega)>1/k}\!\! |\nabla u|^2 \longrightarrow 0,
\end{equation*}
which concludes the proof of \textit{Step 2}.

\textit{Step 3:} $\zeta\in H^1_0$.

We define $\zeta_k = P_k(\zeta)$, where $P_k:\mathbb R^d \to \mathbb R^d$ is given by
\begin{equation*}
P_k(x)=\begin{cases} x & \text{if }|x|\leq k, \\ \frac{k}{|x|}x & \text{if }|x|>k. \end{cases}
\end{equation*}
Then $\zeta_k\in H_0^1\cap L^\infty(\Omega)$ and $\zeta_k \to \zeta$ in $H^1$. 

If $\zeta\cdot g\geq 0$, then it obviously hold $\zeta_k\cdot g \geq 0$, so that we may apply \textit{Step 2} to $\zeta_k$ and obtain
\begin{equation*}
\int \nabla\zeta_k \cdot\nabla u \leq 0.
\end{equation*}
Letting $k$ go to $\infty$ in this last inequality provides the desired conclusion.
\end{proof}

\section{Second variation of the energy}\label{a_sv}

At a map $Q\in H^1(-1,1)^3$, the second variation of the energy reads
\begin{equation*}
D^2 E(Q) [H] = \int \left( \frac{1}{\lambda^2} (H')^2 + D^2f(Q)[H]\right) dx,
\end{equation*}
where
\begin{equation*}
D^2f(Q)[H] = \frac{\theta}{3}|H|^2 - 4 Q \cdot H^2 + (Q\cdot H)^2 + \frac{1}{2} |Q|^2 |H|^2.
\end{equation*}

If we take $Q=\chi =(q_1,q_2,0)$, and consider separately perturbations $H_{sp}=(h_1,h_2,0)$ and $H_{sb}=(0,0,h_3)$, we have
\begin{align*}
|H_{sp}|^2 & = 6h_1^2+2h_2^2 & |H_{sb}|^2 & = 2h_3^2 \\
\chi\cdot H_{sp}^2 & = 2q_1(h_2^2-3h_1^2)+4q_2 h_1 h_2 & \chi\cdot H_{sb}^2 & =2q_1h_3^2 \\
\chi\cdot H_{sp} & = 6q_1h_1+2q_2h_2 & \chi\cdot H_{sb} & =0,
\end{align*}
so that we can compute
\begin{align*}
D^2f(\chi)[H_{sp}] & = 6\left(\frac{\theta}{3}+2q_1+9q_1^2+q_2^2\right)h_1^2 \\
& \quad + 2\left(\frac{\theta}{3}-4q_1 + 3q_1^2+3q_2^2\right)h_2^2 \\
& \quad + 8q_2(3q_1-2)h_1h_2\\
D^2f(\chi)[H_{sb}] & = 2\left(\frac{\theta}{3}-4q_1+3q_1^2+q_2^2\right)h_3^2.
\end{align*}

\bibliographystyle{plain}
\bibliography{phys_art}
\end{document}